\newtheorem{thm}{Theorem}
\newtheorem{cor}{Corollary}
\newtheorem{lem}{Lemma}
\newtheorem{prop}{Proposition}
\theoremstyle{remark}
\newtheorem{rem}{Remark}
\newcommand{\de}{\delta}
\newcommand{\ga}{\gamma}
\newcommand{\RR}{\mathbb{R}}
\newcommand{\NN}{\mathbb{N}}
\definecolor{celestialblue}{rgb}{0.29, 0.59, 0.82}
\definecolor{chocolate(web)}{rgb}{0.82, 0.41, 0.12}
\title[Mixed inequalities for operators associated to critical radius functions...]{Mixed inequalities for operators associated to critical radius functions with applications to Schr\"odinger type operators}
\author[F. Berra]{Fabio Berra}
\address{CONICET and Departamento de Matem\'{a}tica (FIQ-UNL),  Santa Fe, Argentina.}
\email{fberra@santafe-conicet.gov.ar}
\author[G. Pradolini]{Gladis Pradolini}
\address{CONICET and Departamento de Matem\'{a}tica (FIQ-UNL),  Santa Fe, Argentina.}
\email{gladis.pradolini@gmail.com}
\author[P. Quijano]{Pablo Quijano}
\address{Instituto de Matem\'atica Aplicada del Litoral, CONICET-UNL, and Facultad de Ingenier\'ia Qu\'imica, UNL. \indent Colectora Ruta Nac. N 168, Paraje El Pozo.
3000 Santa Fe, Argentina.}
\email{pquijano@santafe-conicet.gov.ar}
\thanks{The author were supported by CONICET and UNL.}
\subjclass[2020]{42B20, 42B25, 35J10}
\keywords{Schr\"odinger Operators, Muckenhoupt weights, critical radius functions}
\date{}
\newcounter{BPQ}
\begin{document}

\begin{abstract}
We obtain weighted mixed inequalities for operators associated to a critical radius function. We consider 
Schrödinger Calderón-Zygmund operators of $(s,\delta)$ type, for $1<s\leq \infty$ and $0<\delta \leq 1$. We also give estimates of the same type for the associated maximal operators. As an application, we obtain a wide variety of mixed inequalities for Schrödinger type singular integrals.

As far as we know, these results are a first approach of mixed inequalities in the Schrödinger setting.
\end{abstract}

\maketitle

\section{Introduction}\label{seccion: introduccion}

One of the most classical results in Harmonic Analysis is the characterization of all the measurable and nonnegative functions $w$ for which the Hardy-Littlewood maximal operator $M$ maps $L^p(w)$ into $L^p(w)$, for $1<p<\infty$. B. Muckenhoupt solved this problem in \cite{Muck72}, showing that $M$ is bounded in $L^p(w)$ if and only if $w$ belongs to the $A_p$ class. Later on, this result was extended to the general context of spaces of homogeneous type (see, for example, \cite{Cald76} and \cite{MS81}).

In 1985, E. Sawyer proved in \cite{Sawyer} that if $u$ and $v$ are $A_1$ weights then the inequality
\begin{equation}\label{eq: intro - mixta de Sawyer}
uv\left(\left\{x\in \mathbb{R}: \frac{M(fv)(x)}{v(x)}>t\right\}\right)\leq \frac{C}{t}\int_{\mathbb{R}} |f|uv
\end{equation}
holds for every positive $t$. This result can be seen as the weak $(1,1)$ type of the operator $Sf=M(fv)/v$ with respect to the measure $d\mu(x)=u(x)v(x)\,dx$. The proof of this inequality is highly non-trivial and involves a very subtle decomposition of dyadic intervals into a family with certain properties, called ``principal intervals''. The idea is an adaptation of a technique that appears in \cite{M-W-76}. It is immediate that \eqref{eq: intro - mixta de Sawyer} generalizes the well-known fact that $M: L^{1}(u)\to L^{1,\infty}(u)$ when $u\in A_1$. On the other hand, one of the motivations to establish and prove \eqref{eq: intro - mixta de Sawyer} is that it allows to show, in an alternative way, the fact that $M$ is bounded in $L^p(w)$ when $w\in A_p$, by combining Jones' factorization theorem with Marcinkiewicz's interpolation result. The main difficulty that appears in proving the inequality above is that the classical covering lemmas do not apply for the operator $S$, which is a perturbation of $M$ via an $A_1$ weight. Moreover, the product $uv$ may be very singular. Indeed, if we take $u=v=|x|^{-1/2}$ then $uv$ is not even locally integrable. These reasons do not allow to apply classical techniques to solve the problem.

Later, in \cite{CruzUribe-Martell-Perez}, D. Cruz Uribe, J. M. Martell and C. P\'erez proved some extensions of inequality \eqref{eq: intro - mixta de Sawyer} to higher dimensions and  also for other operators. Particularly, they proved that if $u$ and $v$ are weights that satisfy  $u\in A_1$ and $v\in A_\infty(u)$, then the inequality
\begin{equation}\label{eq: intro - mixta de CUMP}
uv\left(\left\{x\in \mathbb{R}^d: \frac{|\mathcal{T}(fv)(x)|}{v(x)}>t\right\}\right)\leq \frac{C}{t}\int_{\mathbb{R}^d} |f|uv
\end{equation} 
holds for every positive $t$, where $\mathcal{T}$ is either the Hardy-Littlewood maximal function or a Calder\'on-Zygmund operator (CZO). The main idea in that paper is to obtain the desired estimate for $M_{\mathcal{D}}$, the dyadic Hardy-Littlewood maximal operator and then, by using an extrapolation result, derive the corresponding estimate for $M$ and CZOs. The condition on the weights guarantees that the product $uv$ is an $A_\infty$ weight and therefore some classical techniques, such as Calder\'on-Zygmund decomposition, can be applied to achieve the estimates.


We will refer to these type of estimates as mixed inequalities. Similar results for more general operators were also studied in the literature (see for example \cite{Berra-Carena-Pradolini(M)} and \cite{BCP21} for commutators of CZO, \cite{Berra-Carena-Pradolini(J)} for fractional operators, \cite{Berra} for generalized maximal functions and \cite{BCP21F-S} for mixed inequalities involving Fefferman-Stein estimates). 



In this paper we establish and prove mixed inequalities for classes of operators and weights associated to a \textit{critical radius function}. More precisely,  we will consider the space $\mathbb{R}^d$ equipped with a function $\rho:\mathbb{R}^d\rightarrow (0,\infty)$ whose variation is controlled by the existence of $C_0$ and $N_0\geq 1$ such that for every $x,y\in\mathbb{R}^d$
	\begin{equation} \label{eq-constantesRho}
	C_0^{-1}\rho(x) \left(1+ \frac{|x-y|}{\rho(x)}\right)^{-N_0}
	\leq \rho(y)
	\leq C_0 \,\rho(x) \left(1+ \frac{|x-y|}{\rho(x)}\right)^{\tfrac{N_0}{N_0+1}}.
	\end{equation}
	This functions and its associated operators appear naturaly when dealing with a Schr\"odinger operator $L=-\Delta + V$ as we shall discuss in \S~\ref{seccion: aplicaciones}. It is worth noting that if  $\rho$ is a critical radius function, then for any $\gamma>0$ the mapping $\gamma\rho$ is also a critical radius function. Moreover, if $0<\ga\leq 1$ then $\gamma\rho$ satisfies~\eqref{eq-constantesRho} with the same constants as $\rho$.

Given a locally integrable function $f$ and $\sigma\geq 0$, the \textit{Hardy-Littlewood maximal operator} $M^{\rho,\sigma}f$ is defined by
\begin{equation}\label{eq: operador maximal de H-L}
M^{\rho,\sigma}f(x)=\sup_{Q(x_0,r_0)\ni x} \left(1+\frac{r_0}{\rho(x_0)}\right)^{-\sigma}\left(\frac{1}{|Q|}\int_Q |f(y)|\,dy\right),
\end{equation}
where $Q(x_0,r_0)$ stands for the cube with sides parallel to the coordinate axes centered at $x_0$ and radius $r_0$, that is, $r_0=\sqrt{d}\,\ell(Q)/2$.
 Notice that $M^{\rho,0}=M$, the classical Hardy-Littlewood maximal function. This family of maximal operators is an adapted version of the Hardy-Littlewood maximal operator to the Schr\"odinger context and it is connected to the corresponding Muckenhoupt classes of weights $A^\rho_p$ associated to a critical radius function (see Proposition~3 in~\cite{BCHextrapolation}). 


Our first result involves a mixed type inequality for $M^{\rho,\theta}$. The weights involved in the estimate are a generalization of the classical Muckenhoupt classes and are associated to a critical radius function $\rho$ (see \S~\ref{seccion: preliminares} for the definition).

\begin{thm}\label{thm: mixta para M}
	Let $u\in A_1^{\rho}$ and $v\in A_\infty^\rho(u)$. Then there exists $\sigma\geq0$ such that the inequality 
	\begin{equation*}
	uv\left(\left\{x\in\mathbb{R}^d: \frac{M^{\rho,\sigma}(fv)(x)}{v(x)}>t\right\}\right)\leq \frac{C}{t}\int f(x)u(x)v(x)\,dx
	\end{equation*}
	holds for every positive $t$ and every bounded function with compact support.
\end{thm}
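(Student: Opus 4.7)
The plan is to adapt the Cruz-Uribe--Martell--P\'erez strategy from \cite{CruzUribe-Martell-Perez} to the critical radius setting. The key structural tool is a Whitney-type covering of $\mathbb{R}^d$ by cubes $\{Q_k\}$ with $\ell(Q_k)\approx \rho(x_k)$ and bounded overlap, which localizes the problem to scales on which $\rho$ is essentially constant by~\eqref{eq-constantesRho}. Accordingly, I would split the operator as $M^{\rho,\sigma}(fv)\leq M_{\text{\upshape loc}}^{\rho,\sigma}(fv)+M_{\text{\upshape glob}}^{\rho,\sigma}(fv)$, where the local part takes the supremum over cubes $Q(x_0,r_0)$ with $r_0\lesssim \rho(x_0)$ and the global part is the rest.

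For the local part, the factor $(1+r_0/\rho(x_0))^{-\sigma}$ is comparable to $1$ and $\rho$ does not oscillate appreciably on each $Q_k$, so $M_{\text{\upshape loc}}^{\rho,\sigma}$ is dominated by a classical (dyadic) Hardy-Littlewood maximal operator at the scale of $Q_k$. The hypotheses $u\in A_1^\rho$ and $v\in A_\infty^\rho(u)$ translate locally into $u\in A_1$ and $v\in A_\infty(u)$ on a suitable dilate of $Q_k$, so the classical CUMP mixed inequality applies there. The argument relies on a Calder\'on-Zygmund decomposition of $fv$ at level $t$ with respect to the measure $d\mu=uv\,dx$, which is available because the local $A_\infty(u)$ condition on $v$ grants $uv$ the requisite doubling-type behavior at scales $\lesssim \rho$, even though $uv$ need not be locally integrable in general. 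Summing over the bounded-overlap covering yields the weak-type bound for the local piece.

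For the global part, whenever $r_0\gtrsim \rho(x_0)$ the factor $(1+r_0/\rho(x_0))^{-\sigma}$ produces geometric decay across dyadic scales. Using the standard growth estimates available for weights in $A_1^\rho$ and $A_\infty^\rho(u)$ --- of the type $w(2^jQ)\leq C\,2^{jd\eta}w(Q)$ for an exponent $\eta$ depending on $N_0,C_0$ and the weight constants --- the sum over dyadic scales is controlled by $\sum_{j\geq 0}2^{-j(\sigma-d\eta)}$, which converges provided $\sigma$ is chosen sufficiently large. After integrating against $uv$, this piece is absorbed into $C\,t^{-1}\int f\,uv$.

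I expect the main obstacle to be the precise calibration of $\sigma$: it must dominate the combined growth exponents coming from $u\in A_1^\rho$ and $v\in A_\infty^\rho(u)$ (and thus from $N_0$, $C_0$, $[u]_{A_1^\rho}$ and $[v]_{A_\infty^\rho(u)}$) uniformly in $f$ and $t$. A secondary delicate point is the Calder\'on-Zygmund decomposition with respect to the possibly singular measure $uv\,dx$; handling this requires exploiting the $A_\infty^\rho(u)$ hypothesis to transfer doubling from $u$ to $uv$ at the critical scale, as well as a dyadic (three-lattice-type) framework adapted to $\rho$ so that stopping-time constructions are available.
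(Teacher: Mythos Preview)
Your proposal is correct and follows essentially the same approach as the paper: the local/global split of $M^{\rho,\sigma}$, the covering by critical cubes with bounded overlap (Proposition~\ref{prop-cubrimientocritico}), the reduction of the local part via dyadic grids to a localized CUMP-type mixed inequality (Lemma~\ref{lem-Mloc a Mdiad} and Proposition~\ref{Prop-maxRdiad}), and the control of the global part by choosing $\sigma$ large enough to beat the growth exponents coming from the weight conditions. One minor point: your concern about local integrability of $uv$ is unfounded here, since $u\in A_1^\rho$ and $v\in A_\infty^\rho(u)$ force $uv\in A_\infty^\rho$ (Lemma~\ref{lema: u en A1,rho y v en Ap,rho implican uv en Ap,rho}), hence $uv$ is locally integrable and doubling on subcritical cubes, so the Calder\'on--Zygmund decomposition with respect to $uv\,dx$ goes through without additional difficulty.
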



We shall also be dealing with singular integral operators related to a critical radius function $\rho$. 
For $0<\de\leq1$ we shall say that a linear operator $T$ is a  \emph{Schr\"odinger-Calder\'on-Zygmund operator (SCZO) of $(\infty,\delta)$} type if
\begin{enumerate}[\rm(I)]
	\item $T$ is bounded from $L^1$ into $L^{1,\infty}$;
	\item $T$ has an associated kernel $K:\mathbb{R}^d\times\mathbb{R}^d\rightarrow\mathbb{R}$, in the sense that
	\begin{equation*}
	Tf(x)=\int_{\mathbb{R}^d} K(x,y)f(y)\,dy,\,\,\,\,
	f\in L_c^{\infty} \,\,\text{and a.e.}\,\,x\notin \text{supp}f;
	\end{equation*}
	\item
	for each $N>0$ there exists a constant $C_N$ such that
	\begin{equation}\label{TamPuntual}
	|K(x,y)| \leq
	\frac{C_N}{|x-y|^{d}} \left(1+ \frac{|x-y|}{\rho(x)}\right)^{-N},\,\,\, x\neq y, 
	\end{equation}		
	and there exists $C$ such that
	\begin{equation}\label{suav-puntual}
	|K(x,y)-K(x,y_0)|
	\leq C \frac{|y-y_0|^{\de}}{|x-y|^{d+\de}},\,\,\,\text{when}\,\,
	|x-y|>2|y-y_0|.
	\end{equation}
\end{enumerate}

\begin{rem}\label{rem: tipo fuerte SCZ infinito}
    It is known that if $T$ is a SCZO of $(\infty,\delta)$ type then it is bounded on $L^p(w)$ for $1<p<\infty$ as long as $w\in A^\rho_p$ (see \S~\ref{seccion: preliminares} for the definition) and it is of weak type $(1,1)$ with respect to $w$ as long as $w\in A^\rho_1$ (see~\cite[Theorem 1]{BHQ1}, \cite[Theorem 5 and Proposition 5]{BCHextrapolation} and~\cite[Theorem 3.6]{BCH3}).
\end{rem}
Our main result dealing with this type of operators is contained in the following theorem.

\begin{thm}\label{thm: mixta para T SCZO (infinito,delta)}
	Let $\rho$ be a critical radius function, $u\in A_1^\rho$ and $v\in A_\infty^\rho(u)$. If $0<\delta\leq 1$ and $T$ is a SCZO of $(\infty,\delta)$ type, then the inequality
	\[uv\left(\left\{x\in\mathbb{R}^d: \frac{|T(fv)(x)|}{v(x)}>t\right\}\right)\leq \frac{C}{t}\int_{\mathbb{R}^d}|f(x)|u(x)v(x)dx\]
	holds for every positive $t$ and every bounded function with compact support.
\end{thm}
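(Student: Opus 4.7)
\emph{Proof plan.} The approach is to reduce the estimate for the singular integral $T$ to the already established one for the maximal operator in Theorem~\ref{thm: mixta para M}, via a good-$\lambda$ comparison between $T$ and $M^{\rho,\sigma}$ in the spirit of Coifman--Fefferman, adapted to the mixed weighted setting of Cruz-Uribe, Martell and P\'erez~\cite{CruzUribe-Martell-Perez}. The first step is to observe that, since $u\in A_{1}^{\rho}$ and $v\in A_{\infty}^{\rho}(u)$, the product $uv$ is a doubling weight belonging to $A_{\infty}^{\rho}$, so that the usual $A_{\infty}$ machinery (reverse H\"older, doubling, John--Nirenberg) is available with $uv$ as the underlying measure.

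The plan is then to prove that there exist $\sigma\geq 0$, $\eta>0$ and $C>0$ such that for every $\gamma\in(0,1)$ and every $t>0$,
\begin{equation*}
    uv\bigl(\{|T(fv)|/v>2t,\ M^{\rho,\sigma}(fv)/v\leq\gamma t\}\bigr)
    \leq C\gamma^{\eta}\, uv\bigl(\{|T(fv)|/v>t\}\bigr).
\end{equation*}
To obtain this, I would decompose the level set $E_{t}=\{|T(fv)|/v>t\}$ into Whitney cubes $\{Q_{j}\}$ adapted to $\rho$ and, on each $Q_{j}$, split $fv=(fv)\chi_{Q_{j}^{*}}+(fv)\chi_{(Q_{j}^{*})^{c}}$ for a suitable enlargement $Q_{j}^{*}$. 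The local piece is controlled by the weak $(1,1)$ bound of $T$ (item~(I) of the SCZO definition) together with Kolmogorov's inequality; the global piece is handled by the kernel bounds \eqref{TamPuntual} and \eqref{suav-puntual}. Roughly, the smoothness exponent $\delta$ yields the usual tail decay when $\ell(Q_{j})$ is small compared to $\rho$, while the polynomial factor $(1+|x-y|/\rho(x))^{-N}$ with $N$ sufficiently large takes over at scales beyond $\rho$, with \eqref{eq-constantesRho} ensuring uniform constants; passing from $|\cdot|$-measure to $uv$-measure is done through the $A_{\infty}^{\rho}$ character of $uv$.

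Once the good-$\lambda$ inequality is in hand, the standard absorption argument (truncating $T$ first to guarantee finiteness of the left-hand side, then integrating against the distribution function and removing the truncation) yields $uv(\{|T(fv)|/v>t\})\lesssim uv(\{M^{\rho,\sigma}(fv)/v>ct\})$, and Theorem~\ref{thm: mixta para M} closes the proof. The main obstacle will be the treatment of Whitney cubes whose side length is comparable to or exceeds $\rho$: there the smoothness estimate \eqref{suav-puntual} alone does not integrate well against a general $A_{1}^{\rho}$ weight, and one must trade cancellation for the pointwise decay in \eqref{TamPuntual}, choosing the parameter $\sigma$ in $M^{\rho,\sigma}$ large enough to absorb the loss coming from the growth factor $(1+r_{0}/\rho(x_{0}))^{\sigma}$ that appears when replacing $M$ by $M^{\rho,\sigma}$ at the critical scale.
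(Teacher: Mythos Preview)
Your approach is genuinely different from the paper's, which does not pass through Theorem~\ref{thm: mixta para M} at all. Instead, the paper argues directly: covering $\mathbb{R}^d$ by the critical cubes $Q_j$ of Proposition~\ref{prop-cubrimientocritico}, splitting $f=f\chi_{2Q_j}+f\chi_{(2Q_j)^c}$, handling the far piece with the size bound~\eqref{TamPuntual} together with the $A_1^\rho$ condition on $u$, and treating the near piece by a Calder\'on--Zygmund decomposition of $f_1^j$ at height $t/2$ on $2Q_j$ (Lemma~\ref{lema: DCZ en un cubo fijo}). The good function is controlled via the $L^q(w)$-boundedness of $T$ for $w=uv^{1-q}\in A_q^\rho$ (Remark~\ref{rem: tipo fuerte SCZ infinito}); the bad function via the smoothness~\eqref{suav-puntual}; and the passage from $v$-measure to $uv$-measure is done at each step through the crucial Corollary~\ref{cor: lema fundamental}. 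No Whitney cubes of a level set, and no comparison with $M^{\rho,\sigma}$.

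Your good-$\lambda$ scheme has a real gap at its core step. The set $E_t=\{|T(fv)|/v>t\}$ is not open in general (you only know $v$ is locally integrable, not continuous), so the Whitney decomposition is not available as stated. More fundamentally, even if one decomposes the open set $\{|T(fv)|>\lambda\}$ instead, the standard Coifman--Fefferman mechanism does not transfer to the mixed level sets: on a Whitney cube $Q$ with a nearby reference point $z\notin E_t$ one only knows $|T(fv)(z)|\leq t\,v(z)$, and to deduce $|T(fv)(x)|/v(x)\leq 2t$ for most $x\in Q$ you would need a pointwise comparison between $v(x)$ and $v(z)$, which an $A_\infty^\rho(u)$ weight does not provide. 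The same obstruction undermines the a~priori finiteness needed for the absorption step. This is precisely why mixed weak-type estimates for singular integrals are obtained either by a direct Calder\'on--Zygmund argument (as in this paper) or by extrapolation from the maximal case in the sense of~\cite{CruzUribe-Martell-Perez}, rather than by a good-$\lambda$ on the sets $\{|Tg|/v>t\}$.
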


We shall also consider a wider class of operators with kernels satisfying another type of regularity. For $1<s<\infty$ and $0<\de\leq1$, we shall say that a linear operator $T$ is a  \emph{Schr\"odinger-Calder\'on-Zygmund operator (SCZO) of $(s,\delta)$ type} if
\begin{enumerate}
	\item[($\rm{I}_s$)]\label{tipodebil} T is bounded on $L^{p}$ for $1<p<s$. 
	\item[($\rm{II}_s$)] $T$ has an associated kernel $K:\mathbb{R}^d\times\mathbb{R}^d\rightarrow\mathbb{R}$, in the sense that
	\begin{equation*}
	Tf(x)=\int_{\mathbb{R}^d} K(x,y)f(y)\,dy,\,\,\,\,
	f\in L_c^{s'} \,\,\text{and}\,\,x\notin \text{supp}f.
	\end{equation*}
	Further, 
	for each $N>0$ there exists a constant $C_N$ such that
	\begin{equation}\label{TamHorm}
	\left( \int_{R<|x_0-x|<2R} |K(x,y)|^{s}\,dx \right)^{1/s}\\ \leq C_N R^{-d/s'} \left(1+\frac{R}{\rho(x_0)}\right)^{-N},
	\end{equation}
	for  $|y-x_0|<R/2$, and
	there exists $C$ such that
	\begin{equation}\label{suav-horm}
	\left( \int_{R<|x-y_0|<2R} |K(x,y)-K(x,y_0)|^{s}\,dx\right)^{1/s} \\
	\leq C R^{-d/s'} \left(\frac{r}{R}\right)^{\delta},
	\end{equation}
	for $|y-y_0|<r\leq \rho (y_0)$,  $r<R/2$.
\end{enumerate}
\begin{rem}\label{rem: tipo fuerte SCZ s}
    It is known that if $T$ is a SCZO of $(s,\delta)$ type for some $s>1$, then it is bounded on $L^p(w)$ for $1<p<s$ as long as $w^{1-p'}\in A^\rho_{p'/s'}$ and it is of weak type $(1,1)$ with respect to $w$ as long as $w^{s'}\in A^\rho_1$ (see~\cite[Theorem 1]{BHQ1}, \cite[Theorem 6 and Proposition 6]{BCHextrapolation} and~\cite[Theorem 3.6]{BCH3}).
\end{rem}


Our result for these type of operators is the following.

\begin{thm}\label{thm: mixta para T SCZO (s,delta)}
	Let $\rho$ be a critical radius function, $1<s<\infty$ and $0<\delta\leq 1$. Let $T$ be
 a SCZO of $(s,\delta)$ type. If $u$ is a weight verifying $u^{s'}\in A_1^{\rho}$ and $v\in A_\infty^{\rho}(u^\beta)$ for some $\beta>s'$, then the inequality
	\[uv\left(\left\{x\in\mathbb{R}^d: \frac{|T(fv)(x)|}{v(x)}>t\right\}\right)\leq \frac{C}{t}\int_{\mathbb{R}^d}|f(x)|u(x)v(x)dx\]
	holds for every positive $t$. 
\end{thm}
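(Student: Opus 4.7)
The proof follows the Cruz~Uribe--Martell--P\'erez strategy, with modifications due to the $\rho$-localized geometry and the weaker $L^s$ H\"ormander-type smoothness of the kernel. By density, truncation and monotone convergence one reduces to $f\geq 0$ bounded with compact support. The first step is a Calder\'on--Zygmund-type decomposition of $fv$ at level $t$, using a dyadic grid adapted to $\rho$. This yields a pairwise disjoint family $\{Q_j\}$ of subcritical cubes (so $r(Q_j)\lesssim \rho(x_{Q_j})$) and a splitting $fv = g + \sum_j b_j$ with $b_j$ supported on $Q_j$, $\int b_j = 0$, $|g/v|\lesssim t$ a.e., and averages $\frac{1}{|Q_j|}\int_{Q_j}|fv|$ controlled by $t$ times the density of $uv$. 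The cubes are chosen so that, by invoking Theorem~\ref{thm: mixta para M}, the enlarged set $E^{*}=\bigcup_j cQ_j$ satisfies $uv(E^{*})\leq \frac{C}{t}\int f\,uv$. Splitting $T(fv) = T(g)+\sum_j T(b_j)$ reduces matters to estimating each piece on $(E^{*})^c$.

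For the good part, choose $1<p<s$ and apply Chebyshev: $uv\{|T(g)|/v>t\}\lesssim t^{-p}\int |T(g)|^p u\,v^{1-p}\,dx$. The hypotheses $u^{s'}\in A_1^\rho$ and $v\in A_\infty^\rho(u^\beta)$ with $\beta>s'$ are precisely enough to place the weight $u\,v^{1-p}$ in a class covered by Remark~\ref{rem: tipo fuerte SCZ s} (using reverse H\"older for $v$ with respect to $u^\beta$); the boundedness of $T$ on $L^p(u\,v^{1-p})$ then converts $\|T(g)\|_{L^p}$ into $\|g\|_{L^p}$, and the pointwise bound $|g/v|\lesssim t$ collapses this into $\frac{C}{t}\int |g|u\lesssim \frac{C}{t}\int f\,uv$.

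For the bad part on $(E^{*})^c$, the cancellation $\int b_j = 0$ lets one write $T(b_j)(x)=\int_{Q_j}[K(x,y)-K(x,y_{Q_j})]\,b_j(y)\,dy$, and Chebyshev reduces the goal to bounding $\sum_j \int_{(E^{*})^c}|T(b_j)|u$. Splitting into dyadic annuli $R_k\leq |x-y_{Q_j}|<R_{k+1}$ with $R_k=2^k r_j$, and applying H\"older in $x$ with exponents $s$ and $s'$, the smoothness \eqref{suav-horm} gives the factor $R_k^{-d/s'}(r_j/R_k)^\delta$ when $R_k\lesssim \rho(y_{Q_j})$, while $u^{s'}\in A_1^\rho$ gives $R_k^{d/s'}(1+R_k/\rho(x_{Q_j}))^{\sigma/s'}\inf_{Q_j}u$; the geometric sums in $2^{-k\delta}$ telescope to $C(\inf_{Q_j}u)\int_{Q_j}|b_j|$, which is dominated by $\int_{Q_j} f\,uv$ via the decomposition properties.

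The main obstacle lies in the annuli with $R_k\gtrsim\rho(y_{Q_j})$, where \eqref{suav-horm} is unavailable; there one must resort to the size bound \eqref{TamHorm} applied separately to $K(x,y)$ and $K(x,y_{Q_j})$, picking $N$ large enough that the decay $(1+R_k/\rho)^{-N}$ absorbs the $(1+R_k/\rho)^{\sigma/s'}$ growth produced by $u^{s'}\in A_1^\rho$. Balancing these two regimes, synchronizing the Calder\'on--Zygmund level with the choice of $p<s$, and checking the precise weight identifications required by Remark~\ref{rem: tipo fuerte SCZ s} together form the core technical content of the argument; the condition $\beta>s'$ is there to leave just enough room for the H\"older manipulations needed in both the good and bad parts.
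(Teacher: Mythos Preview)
Your sketch is headed in the right direction---Calder\'on--Zygmund decomposition, good/bad split, Chebyshev plus weighted $L^q$ boundedness of $T$ for the good part, cancellation plus the $L^s$-smoothness for the bad part---but the \emph{organization} you describe is different from the paper's and, as written, has a real gap.

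The paper does \emph{not} perform a single global decomposition. It first covers $\mathbb{R}^d$ by the critical cubes $\{Q_j\}$ of Proposition~\ref{prop-cubrimientocritico}, splits $f=f\mathcal{X}_{2Q_j}+f\mathcal{X}_{(2Q_j)^c}$, and handles the far piece $f_2^j$ directly with the size bound~\eqref{TamHorm} and H\"older (no decomposition, no cancellation). Only then, on the fixed subcritical cube $2Q_j$, does it run the Calder\'on--Zygmund decomposition of $f_1^j$ \emph{with respect to the measure} $v\,dx$ (Lemma~\ref{lema: DCZ en un cubo fijo}). Two consequences of this ordering are important. First, the dyadic cubes $P_{i,j}$ are automatically subcritical because they sit inside $2Q_j$, so the hypothesis $r\le\rho(y_0)$ in~\eqref{suav-horm} is available on \emph{every} annulus; together with the smoothness-with-decay lemma (Lemma~4 of~\cite{BHQ1}, restated here just before the proof), one gets the factor $(1+R_k/\rho)^{-N}$ for free and your separate treatment of the ``far annuli'' is unnecessary. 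Second, the measure $v\,dx$ is only known to be doubling at subcritical scales (this is what $A^\rho_\infty$-type conditions give), so a \emph{global} Calder\'on--Zygmund decomposition of $fv$ with the properties you list---in particular $|g/v|\lesssim t$, which forces the stopping to be done with respect to $v$---is not available without first localizing. Your phrase ``dyadic grid adapted to $\rho$'' papers over exactly this point; to make it rigorous you would end up reproducing the paper's localize-first scheme.

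There is a second gap in the good part: the assertion that ``the hypotheses are precisely enough to place the weight $uv^{1-p}$ in a class covered by Remark~\ref{rem: tipo fuerte SCZ s}'' hides a nontrivial lemma. What is needed is the existence of some $1<q<s$ with $(uv^{1-q})^{1-q'}=u^{1-q'}v\in A^\rho_{q'/s'}$; this is exactly Proposition~\ref{prop: peso para el tipo fuerte de T (s,delta)}, and its proof (choosing $q'$ via a small $\varepsilon$ coming from the reverse H\"older of $u^{s'}$ and using $\beta>s'$ through Proposition~\ref{propo: v en A1(u) implica v en A1(u^alpha)}) is where the restriction $\beta>s'$ actually enters. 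You should state and prove this step rather than invoking ``reverse H\"older for $v$ with respect to $u^\beta$'' in passing.
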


\begin{rem}
	Notice that we require a bit stronger assumption on $v$ for this theorem to hold. It is easy to see that the hypothesis on $u$ and $v$ in this theorem imply the corresponding ones in Theorem~\ref{thm: mixta para T SCZO (infinito,delta)} by virtue of Proposition~\ref{propo: v en A1(u) implica v en A1(u^alpha)} (see \S~\ref{seccion: preliminares}).
\end{rem}

Theorem~\ref{thm: mixta para T SCZO (infinito,delta)} and Theorem~\ref{thm: mixta para T SCZO (s,delta)} extend the known weak $(1,1)$ type of $T$ with $A_1^\rho$ weights when we take $v=1$ (see Theorem 3.6 in~\cite{BCH3}).

The article is organized as follows. In \S~\ref{seccion: preliminares} we give the  definitions for the reading and we state and prove some auxiliary results required for the main proofs, contained in \S~\ref{seccion: prueba de resultados principales}. Finally in \S~\ref{seccion: aplicaciones} we give mixed inequalities for operators in the Schrödinger setting, as an application. 

\section{Preliminaries and auxiliary results}\label{seccion: preliminares}

We begin by introducing the basic definitions involved in our estimates. Throughout the article $\rho: \mathbb{R}^d\to (0,\infty)$ will denote a fixed critical radius function. By a \textit{weight} $w$ we understand a function that is locally integrable and verifies $0<w(x)<\infty$ almost everywhere. 

We shall now introduce the classes of weights involved in our estimates. These are an extension of the classical Muckenhoupt $A_p$ classes and were first defined by B. Bongioanni, E. Harboure and O. Salinas in~\cite{BHS-classesofweights}.

Let $u$ be a weight. For $1<p<\infty$ and $\theta\geq 0$, we say that $w\in A_p^{\rho,\theta}(u)$ if the inequality
\begin{equation}\label{eq: clase Ap,rho,theta(u)}
\left(\frac{1}{u(Q)}\int_Q wu\right)^{1/p}\left(\frac{1}{u(Q)}\int_Q w^{1-p'}u\right)^{1/p'}\leq C\left(1+\frac{r}{\rho(x)}\right)^{\theta}
\end{equation} 
holds for every cube $Q=Q(x,r)$ and $C$ independent of $Q$. We recall that we consider cubes with sides parallel to the coordinate axes, $\ell(Q)$ denotes the length of the sides of $Q$ and $Q(x,r)$ denotes the cube centered in $x$ with radious $r$, that is, $r=\sqrt{d}\ell(Q)/2$. This notation will be used throughout the article. 

Similarly, $w\in A_1^{\rho,\theta}(u)$ if
\begin{equation}\label{eq: clase A1,rho,theta(u)}
\frac{1}{u(Q)}\int_Q wu\leq C\left(1+\frac{r}{\rho(x)}\right)^{\theta}\inf_Q w,
\end{equation} 
for every cube $Q$. The smallest constants in the corresponding inequalities above will be denoted by $[w]_{A_p^{\rho,\theta}(u)}$. For $1\leq p<\infty$, the $A_p^\rho$ class is defined as the collection of all the $A_p^{\rho,\theta}$ classes for $\theta\geq 0$, that is
\[A_p^\rho(u)=\bigcup_{\theta\geq 0} A_p^{\rho,\theta}(u).\]
We also define
\[A_\infty^\rho(u) =\bigcup_{p\geq 1} A_p^\rho(u).\]

There are many ways to characterize the class above. A weight $w$ belongs to $A_\infty^\rho(u)$ if there exist $\theta\geq 0$ and $\varepsilon>0$ such that the inequality
\begin{equation}\label{eq: condicion Ainf,rho(u)}
\frac{wu(E)}{wu(Q)}\leq C\left(1+\frac{r}{\rho(x)}\right)^\theta\left(\frac{u(E)}{u(Q)}\right)^\varepsilon
\end{equation}
holds for every cube  $Q$ and every measurable subset $E$ of $Q$.

When $u=1$ we shall simply denote $A_p^\rho(u)=A_p^\rho$ and $A_p^{\rho,\theta}(u)=A_p^{\rho,\theta}$, $1\leq p\leq \infty$ and $\theta\geq 0$.

A very important property of $A_p^\rho$ weights is that they verify a reverse H\"{o}lder inequality. More precisely, given $\theta\geq 0$ and $1<s<\infty$, we say that a weight $w$ belongs to the \textit{reverse H\"{o}lder class} $\mathrm{RH}_s^{\rho,\theta}$ if there exists a positive constant $C$ such that for every cube $Q$  the inequality
\begin{equation}\label{eq: clase RHs,rho,theta}
\left(\frac{1}{|Q|}\int_Q w^s\right)^{1/s}\leq C\left(1+\frac{r}{\rho(x)}\right)^\theta\left(\frac{1}{|Q|}\int_Q w\right).
\end{equation}
holds. When $s=\infty$, we say $w\in \mathrm{RH}_{\infty}^{\rho,\theta}$ if
\begin{equation}\label{eq: clase RHinf,rho,theta}
\sup_Q w\leq C\left(1+\frac{r}{\rho(x)}\right)^\theta\left(\frac{1}{|Q|}\int_Q w\right)
\end{equation}
holds for every ball $Q$ and $C$ independent of $Q$. The smallest constant $C$ for which these estimates hold will be denoted by $[w]_{\mathrm{RH}_s^{\rho,\theta}}$.

As in the case of $A_p^\rho$ classes, we define
\[\mathrm{RH}_s^\rho=\bigcup_{\theta\geq 0}\mathrm{RH}_s^{\rho,\theta}, \quad 1<s\leq\infty.\] 
The next results give some useful properties of weights belonging to $A_p^\rho$ and $\mathrm{RH}_s^\rho$ classes, generalizing some classical versions which deal with usual $A_p$ and $\mathrm{RH}_s$ classes.

In order to do so, we shall introduce an auxiliary operator to establish some properties of weights that will be useful in our main results. For $\theta\geq 0$ we define the \textit{Hardy-Littlewood minimal operator}  to be
\begin{equation}\label{eq: operador minimal}
\mathcal{M}^{\rho,\theta}f(x)=\inf_{Q(x_0,r_0)\ni x} \left(1+\frac{r_0}{\rho(x_0)}\right)^{\theta}\left(\frac{1}{|Q|}\int_Q |f(y)|\,dy\right)
\end{equation}
The operators $M^{\rho,\theta}$ and $\mathcal{M}^{\rho,\theta}$ are closely related to weights belonging to $A_1^{\rho,\theta}$ and $\mathrm{RH}_\infty^{\rho,\theta}$, respectively. This relation is established and proved in Lema~\ref{lema: relacion w-Mw para maximal y minimal} (see \S~\ref{seccion: preliminares}).

The next lemma establishes a characterization for weights belonging to $A_1^{\rho,\theta}$ and $\textrm{RH}_\infty^{\rho,\theta}$. The proof is straightforward so we will omit it.

\begin{lem}\label{lema: relacion w-Mw para maximal y minimal}
	Let $w$ be a weight and $\theta\geq 0$. Then:
	\begin{enumerate}[\rm(a)]
		\item\label{item: lema: relacion w-Mw para maximal y minimal - item a} $w\in A_1^{\rho,\theta}$ if and only if there exists a positive constant $C$ such that $M^{\rho,\theta}w(x)\leq Cw(x)$, for almost every $x$;
		\item\label{item: lema: relacion w-Mw para maximal y minimal - item b} $w\in \mathrm{RH}_\infty^{\rho,\theta}$ if and only if there exists $C>0$ such that the inequality $w(x)\leq~C\mathcal{M}^{\rho,\theta}w(x)$ holds for almost every $x$.
	\end{enumerate}
\end{lem}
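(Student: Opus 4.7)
The plan is to unpack the definitions of $M^{\rho,\theta}$ and $\mathcal{M}^{\rho,\theta}$ and observe that the pointwise domination statements amount to rewritings of the $A_1^{\rho,\theta}$ and $\mathrm{RH}_\infty^{\rho,\theta}$ conditions, where $\inf_Q w$ and $\sup_Q w$ are interpreted as essential infimum and essential supremum. Both parts follow the same pattern: the factor $(1+r_0/\rho(x_0))^{\theta}$ in the weight condition is independent of the point $x\in Q$, so it can be moved across the cube-average inequality to match the analogous factor inside the definition of the operator.

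For part (a), assume first that $M^{\rho,\theta}w(x)\leq C w(x)$ for almost every $x$. Given a cube $Q=Q(x_0,r_0)$, for a.e.\ $x\in Q$,
\[\left(1+\frac{r_0}{\rho(x_0)}\right)^{-\theta}\frac{1}{|Q|}\int_Q w \;\leq\; M^{\rho,\theta}w(x) \;\leq\; C\,w(x),\]
and taking the essential infimum over $x\in Q$ yields the $A_1^{\rho,\theta}$ inequality. Conversely, if $w\in A_1^{\rho,\theta}$, for every cube $Q(x_0,r_0)\ni x$ and a.e.\ $x$,
\[\left(1+\frac{r_0}{\rho(x_0)}\right)^{-\theta}\frac{1}{|Q|}\int_Q w \;\leq\; C\inf_Q w \;\leq\; C\,w(x).\]
Taking the supremum over the admissible cubes delivers $M^{\rho,\theta}w(x)\leq C w(x)$.

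For part (b) the argument is dual, exploiting that the damping factor in $M^{\rho,\theta}$ is replaced by an amplifying factor in $\mathcal{M}^{\rho,\theta}$. If $w(x)\leq C\mathcal{M}^{\rho,\theta}w(x)$ a.e., then for each cube $Q=Q(x_0,r_0)$ and a.e.\ $x\in Q$,
\[w(x) \;\leq\; C\,\mathcal{M}^{\rho,\theta}w(x) \;\leq\; C\left(1+\frac{r_0}{\rho(x_0)}\right)^{\theta}\frac{1}{|Q|}\int_Q w,\]
and taking the essential supremum over $x\in Q$ gives the $\mathrm{RH}_\infty^{\rho,\theta}$ condition. For the converse, for any cube containing a Lebesgue point $x$ of $w$ one has $w(x)\leq\sup_Q w$, which combined with the $\mathrm{RH}_\infty^{\rho,\theta}$ bound and then passing to the infimum over such cubes produces $w(x)\leq C\mathcal{M}^{\rho,\theta}w(x)$.

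There is no genuine obstacle: the only subtle point is staying consistent about interpreting the suprema and infima appearing in the $A_1^{\rho,\theta}$ and $\mathrm{RH}_\infty^{\rho,\theta}$ definitions as essential ones, so that the pointwise inequalities valid only a.e.\ interact correctly with them. This is exactly why the authors regard the verification as straightforward.
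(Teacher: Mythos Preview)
Your proposal is correct and is exactly the straightforward unpacking of definitions that the paper has in mind; indeed the paper omits the proof entirely, saying it is straightforward. There is nothing to add: your handling of the essential infimum/supremum is the only point requiring any care, and you address it appropriately.
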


The lemma below states some properties of $A_p^\rho$ weights. These properties are well-known in the literature. A proof can be found in~\cite{BHS-classesofweights} and~\cite{BHQ-twoweighted}.

\begin{lem}\label{lema: propiedades clase Ap,rho}
	Let $\theta\geq 0$ and $1<p<\infty$. Then:
	\begin{enumerate}[\rm(a)]
		\item \label{item: lema: propiedades clase Ap,rho - item a}if $u\in A_p^\rho$, then $u^{1-p'}\in A_{p'}^\rho$;
		\item \label{item: lema: propiedades clase Ap,rho - item b}if $u$ and $v$ belong to $A_1^\rho$, then $uv^{1-p}\in A_p^\rho$;
		\item \label{item: lema: propiedades clase Ap,rho - item c}if $w\in A_p^\rho$, there exist weights $u$ and $v$ in $A_1^\rho$ that verify $w=uv^{1-p}$.
	\end{enumerate}
\end{lem}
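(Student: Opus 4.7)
The plan is to handle the three items separately: (a) is a symmetry observation, (b) reduces to the $A_1^\rho$ definition via the pointwise bound $v^{1-p}\leq(\inf_Q v)^{1-p}$, and (c) is a Jones-type factorization whose proof hinges on a Rubio de Francia iteration adapted to the critical-radius setting.

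For (a), I would start by noting that $(1-p')(1-p)=1$, so $(u^{1-p'})^{1-p}=u$. Writing out the $A_{p'}^{\rho,\theta}$ inequality for $u^{1-p'}$ gives
\[\left(\frac{1}{|Q|}\int_Q u^{1-p'}\right)^{1/p'}\left(\frac{1}{|Q|}\int_Q u\right)^{1/p}\leq C\left(1+\frac{r}{\rho(x)}\right)^{\theta},\]
which is exactly the $A_p^{\rho,\theta}$ condition for $u$ after reordering the two factors. Hence the same $\theta$ and constant work, and (a) follows at once.

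For (b), I would fix $\theta_0$ with $u,v\in A_1^{\rho,\theta_0}$. Since $1-p<0$, on any cube $Q$ one has pointwise $v^{1-p}(x)\leq(\inf_Q v)^{1-p}$, and the $A_1^{\rho,\theta_0}$ condition for $u$ yields
\[\frac{1}{|Q|}\int_Q uv^{1-p}\leq (\inf_Q v)^{1-p}\,\frac{1}{|Q|}\int_Q u\leq C\left(1+\frac{r}{\rho(x)}\right)^{\theta_0}(\inf_Q v)^{1-p}\inf_Q u.\]
Using $(1-p)(1-p')=1$ we have $(uv^{1-p})^{1-p'}=u^{1-p'}v$, so a symmetric bound with $u,v$ swapped controls $\int_Q (uv^{1-p})^{1-p'}$. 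Raising these two inequalities to the $1/p$ and $1/p'$ powers respectively and multiplying, the exponents of $\inf_Q u$ and $\inf_Q v$ each collapse to zero because $1/p+1/p'=1$, leaving the factor $C(1+r/\rho(x))^{\theta_0}$, which is precisely the $A_p^{\rho,\theta_0}$ condition for $uv^{1-p}$.

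For (c), the strategy is a Rubio de Francia iteration algorithm adapted to this setting. Given $w\in A_p^{\rho,\theta_1}$, I would invoke the $L^p(w)$-boundedness of the maximal operator $M^{\rho,\theta}$ for suitable $\theta$ (a structural fact proved in~\cite{BHS-classesofweights}) to define
\[Rf=\sum_{k\geq 0}\frac{(M^{\rho,\theta})^k f}{\bigl(2\|M^{\rho,\theta}\|_{L^p(w)}\bigr)^k},\]
which satisfies $f\leq Rf$ and $M^{\rho,\theta}(Rf)\leq 2\|M^{\rho,\theta}\|_{L^p(w)}\,Rf$; by Lemma~\ref{lema: relacion w-Mw para maximal y minimal}(a) this places $Rf$ in $A_1^{\rho,\theta}$. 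Applying $R$ to convenient test functions associated with $w$ and, via part (a), with the conjugate weight $w^{1-p'}$, one produces $u_1,u_2\in A_1^\rho$ for which $w=u_1 u_2^{1-p}$. The main obstacle here is not the algebraic factorization step but the underlying $L^p(w)$-boundedness of $M^{\rho,\theta}$ with quantitative control of the exponent $\theta$ in terms of the $A_p^\rho$ constants of $w$, plus the verification that the resulting $u_i$ are genuine weights (finite and positive a.e.); these are precisely the delicate points worked out in~\cite{BHS-classesofweights} and~\cite{BHQ-twoweighted}.
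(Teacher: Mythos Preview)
The paper does not give its own proof of this lemma; it simply records the statement as well-known and refers to \cite{BHS-classesofweights} and \cite{BHQ-twoweighted}. Your arguments for (a) and (b) are correct and are exactly the standard ones: (a) is the symmetry of the $A_p^{\rho,\theta}$ condition under $p\leftrightarrow p'$, and in (b) your cancellation of the $\inf_Q u$ and $\inf_Q v$ factors via $1/p+(1-p')/p'=0$ and $(1-p)/p+1/p'=0$ is precisely right, yielding $uv^{1-p}\in A_p^{\rho,\theta_0}$ with the same $\theta_0$.

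For (c) you correctly identify the Rubio de Francia iteration (combined with the boundedness of $M^{\rho,\theta}$ on $L^p(w)$ for $w\in A_p^\rho$ and the characterization in Lemma~\ref{lema: relacion w-Mw para maximal y minimal}\ref{item: lema: relacion w-Mw para maximal y minimal - item a}) as the engine, and you point to the same references the paper does. One small caveat: the sentence ``applying $R$ to convenient test functions\ldots one produces $u_1,u_2\in A_1^\rho$ with $w=u_1u_2^{1-p}$'' understates the work involved. Merely applying the algorithm to two separate test functions yields two $A_1^\rho$ weights, but not the \emph{exact} identity $w=u_1u_2^{1-p}$; obtaining that equality requires a coupled fixed-point or simultaneous-iteration argument (as in Jones' original proof or the Coifman--Jones--Rubio simplification). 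This is indeed what is done in the cited sources, so your deferral is appropriate, but the ``main obstacle'' is not only the boundedness of $M^{\rho,\theta}$---the factorization step itself is where the iteration must be set up carefully.
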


The following proposition gives us another factorization property of $A_p^\rho$ weights. The corresponding result for classical Muckenhoupt and reverse Hölder classes was proved in \cite{Cruz-Uribe-Neugebauer}.

\begin{prop}\label{propo: pesos de Ainf,rho se factorizan}
	Let $s>1$, $1<p<\infty$ and $w\in A_p^\rho\cap \mathrm{RH}_s^\rho$. Then there exist weights $w_1$ and $w_2$ such that $w=w_1w_2$, with $w_1\in A_1^\rho\cap \mathrm{RH}_s^\rho$ and $w_2\in A_p^\rho\cap \mathrm{RH}_\infty^\rho$.
\end{prop}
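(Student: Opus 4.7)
The plan is to reduce the statement to a factorization in a single Muckenhoupt-type class. The key observation is the $\rho$-analogue of the classical identity
\[
w\in A_p^\rho\cap \mathrm{RH}_s^\rho \;\Longrightarrow\; w^s\in A_q^\rho, \qquad q:=s(p-1)+1,
\]
which is obtained by multiplying the defining inequalities for $w\in A_p^{\rho,\theta_1}$ and $w\in \mathrm{RH}_s^{\rho,\theta_2}$, raising to the $s$-th power, and using the algebraic identity $(w^s)^{1-q'}=w^{1-p'}$ together with $q-1=s(p-1)$; the resulting $\theta$-exponents add in the way dictated by $\rho$-bookkeeping. Once $w^s\in A_q^\rho$ is in hand, Lemma~\ref{lema: propiedades clase Ap,rho}(\ref{item: lema: propiedades clase Ap,rho - item c}) furnishes $u_1,u_2\in A_1^\rho$ with $w^s=u_1 u_2^{1-q}$, and we take
\[
w_1:=u_1^{1/s},\qquad w_2:=u_2^{(1-q)/s}=u_2^{1-p},
\]
which gives $w=w_1 w_2$ by construction.

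It then remains to check that $w_1$ and $w_2$ belong to the advertised classes. For $w_1$, choose $\theta\geq 0$ with $u_1\in A_1^{\rho,\theta}$. Jensen's inequality yields
\[
\frac{1}{|Q|}\int_Q u_1^{1/s}\leq\left(\frac{1}{|Q|}\int_Q u_1\right)^{1/s}\leq C\left(1+\frac{r}{\rho(x)}\right)^{\theta/s}\inf_Q u_1^{1/s},
\]
so $w_1\in A_1^{\rho,\theta/s}$; the same chain, now read as $(|Q|^{-1}\int_Q w_1^s)^{1/s}\leq C(1+r/\rho(x))^{\theta/s}|Q|^{-1}\int_Q w_1$, gives $w_1\in \mathrm{RH}_s^{\rho,\theta/s}$. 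For $w_2$, membership in $A_p^\rho$ is immediate from Lemma~\ref{lema: propiedades clase Ap,rho}(\ref{item: lema: propiedades clase Ap,rho - item b}) applied to the pair $(1,u_2)$, the constant weight being trivially $A_1^\rho$. For $\mathrm{RH}_\infty^\rho$, pick $\theta'\geq 0$ with $u_2\in A_1^{\rho,\theta'}$ and combine Jensen's inequality for the convex function $t\mapsto t^{1-p}$ with the $A_1^\rho$ bound:
\[
\frac{1}{|Q|}\int_Q u_2^{1-p}\geq \left(\frac{1}{|Q|}\int_Q u_2\right)^{1-p}\geq C^{1-p}\left(1+\frac{r}{\rho(x)}\right)^{\theta'(1-p)}(\inf_Q u_2)^{1-p}.
\]
Since $1-p<0$ and $\sup_Q w_2=(\inf_Q u_2)^{1-p}$, rearranging yields precisely the $\mathrm{RH}_\infty^{\rho,\theta'(p-1)}$ inequality for $w_2$.

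The only delicate point in the whole scheme is careful tracking of the $\theta$-exponents, since unlike the classical situation the constants in each inequality carry a factor of $(1+r/\rho(x))^\theta$ that must be propagated through the multiplications and $s$-th powers. Beyond this bookkeeping, the argument is a direct transcription of the classical Cruz-Uribe--Neugebauer reasoning, and no new technical idea is required once Lemma~\ref{lema: propiedades clase Ap,rho} is in place.
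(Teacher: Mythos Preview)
Your proof is correct and follows the same strategy as the paper: reduce to $w^s\in A_q^\rho$ with $q=s(p-1)+1$, factor via Lemma~\ref{lema: propiedades clase Ap,rho}\ref{item: lema: propiedades clase Ap,rho - item c}, and verify the claimed memberships for $w_1=u_1^{1/s}$ and $w_2=u_2^{1-p}$. The only cosmetic difference is that the paper packages the verification of $w_2\in A_p^\rho\cap\mathrm{RH}_\infty^\rho$ into a separate lemma (Lemma~\ref{lema: w en A1,rho implica w^{1-p} en RHinfty,rho}) proved via the minimal operator $\mathcal{M}^{\rho,\theta}$, whereas your direct Jensen argument for $\mathrm{RH}_\infty^\rho$ is a slight shortcut that avoids that machinery.
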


The proof of this result will follow from the next two lemmas, which are extensions of classical properties given also in \cite{Cruz-Uribe-Neugebauer}.

\begin{lem}\label{lema: relacion Ap,rho-RHs-rho con Aq-rho}
	Let $s,p>1$ and $q$ defined by $q=s(p-1)+1$. Then $w\in A_p^\rho\cap\mathrm{RH}_s^\rho$ if and only if $w^s\in A_q^\rho$.
\end{lem}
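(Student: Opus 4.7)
\smallskip

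\noindent\textbf{Proof proposal.} The plan is to adapt the classical Cruz--Uribe--Neugebauer argument to the $\rho$--setting, the only new ingredient being a careful bookkeeping of the perturbation factors $(1+r/\rho(x))^{\theta}$. The proof will rest on two elementary algebraic identities: since $q-1=s(p-1)$, one has
\[
\frac{q-1}{s}=p-1 \qquad\text{and}\qquad s(1-q')=-\frac{s}{q-1}=-\frac{1}{p-1}=1-p',
\]
so that $w^{s(1-q')}=w^{1-p'}$. These identities are what make the $A_q^\rho$ condition for $w^s$ match, up to powers, the product of the $A_p^\rho$ and $\mathrm{RH}_s^\rho$ conditions for $w$.

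\smallskip

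\noindent\emph{Forward direction.} Assume $w\in A_p^{\rho,\theta_1}\cap \mathrm{RH}_s^{\rho,\theta_2}$. I would raise the reverse H\"older inequality \eqref{eq: clase RHs,rho,theta} to the $s$-th power, multiply it by the $p$-th power of \eqref{eq: clase Ap,rho,theta(u)} (with $u\equiv1$), and use the identities above to rewrite the resulting estimate as
\[
\left(\frac{1}{|Q|}\int_Q w^s\right)\left(\frac{1}{|Q|}\int_Q w^{s(1-q')}\right)^{q-1}\leq C\left(1+\frac{r}{\rho(x)}\right)^{s\theta_2+ps\theta_1},
\]
which is precisely $w^s\in A_q^{\rho,\theta'}$ with $\theta'=s(\theta_2+p\theta_1)/q$.

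\smallskip

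\noindent\emph{Reverse direction.} Assume $w^s\in A_q^{\rho,\theta'}$. Taking the $1/s$--th power of the $A_q^\rho$ condition and using $(q-1)/s=p-1$ yields
\[
\left(\frac{1}{|Q|}\int_Q w^s\right)^{1/s}\left(\frac{1}{|Q|}\int_Q w^{1-p'}\right)^{p-1}\leq C\left(1+\frac{r}{\rho(x)}\right)^{q\theta'/s}.
\]
To recover $w\in A_p^\rho$ I would invoke H\"older's inequality $\frac{1}{|Q|}\int_Q w\leq \bigl(\frac{1}{|Q|}\int_Q w^s\bigr)^{1/s}$ in the left factor. For the reverse H\"older part I would apply the standard H\"older inequality with exponents $p,p'$ to obtain
\[
1\leq\left(\frac{1}{|Q|}\int_Q w\right)^{1/p}\left(\frac{1}{|Q|}\int_Q w^{1-p'}\right)^{1/p'},
\]
which rearranges to $\bigl(\frac{1}{|Q|}\int_Q w^{1-p'}\bigr)^{-(p-1)}\leq \frac{1}{|Q|}\int_Q w$; plugging this into the displayed inequality above yields $w\in \mathrm{RH}_s^{\rho,q\theta'/s}$.

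\smallskip

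\noindent There is essentially no analytic obstacle here: the whole argument is a rearrangement of H\"older's inequality together with the two algebraic identities. The only thing that requires attention is verifying that the perturbation factors $(1+r/\rho(x))^{\theta}$ combine and rescale correctly through each step, so that the union over $\theta\geq 0$ in the definitions of $A_p^\rho$, $\mathrm{RH}_s^\rho$ and $A_q^\rho$ actually matches on both sides of the equivalence.
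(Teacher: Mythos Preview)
Your proposal is correct and follows essentially the same route as the paper: the forward direction combines the $\mathrm{RH}_s^{\rho}$ and $A_p^{\rho}$ inequalities via the identity $s(1-q')=1-p'$, and the reverse direction uses H\"older/Jensen (the paper phrases your inequality $1\le(\fint_Q w)^{1/p}(\fint_Q w^{1-p'})^{1/p'}$ as a Jensen step) to recover both $A_p^{\rho}$ and $\mathrm{RH}_s^{\rho}$ from the $A_q^{\rho}$ condition on $w^s$. The only differences are cosmetic---the order in which the two conclusions are extracted and the names of the $\theta$-parameters---and your exponent tracking matches the paper's ($\theta'=s(p\theta_1+\theta_2)/q$ forward, $q\theta'/s$ and $q\theta'/(sp)$ backward).
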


\begin{proof}
	Let us first assume that $w\in A_p^\rho\cap \mathrm{RH}_s^\rho$. Then there exist two nonnegative numbers $\theta_1$ and $\theta_2$ such that $w\in \mathrm{RH}_s^{\rho,\theta_1}\cap A_p^{\rho,\theta_2}$. By using the relation between $p$, $q$ and $s$ we have, for every cube $Q=Q(x,r)$
	\begin{align*}
	\left(\frac{1}{|Q|}\int_Q w^s\right)^{1/q}\left(\frac{1}{|Q|}\int_Q w^{s(1-q')}\right)^{1/q'}&=\left(\frac{1}{|Q|}\int_Q w^s\right)^{1/q}\left(\frac{1}{|Q|}\int_Q w^{1-p'}\right)^{s(p-1)/q}\\
	&\leq [w]_{\mathrm{RH}_s^{\rho,\theta_1}}^{s/q}\left(\frac{1}{|Q|}\int_Q w\right)^{s/q}\left(1+\frac{r}{\rho(x)}\right)^{\theta_1s/q}\\
	&\qquad\times\left(\frac{1}{|Q|}\int_Q w^{1-p'}\right)^{s(p-1)/q}\\
	&\leq [w]_{\mathrm{RH}_s^{\rho,\theta_1}}^{s/q}[w]_{A_p^{\rho,\theta_2}}^{ps/q}\left(1+\frac{r}{\rho(x)}\right)^{ps\theta_2/q+s\theta_1/q},
	\end{align*}
	which implies that $w^s\in A_q^{\rho,\theta_0}\subseteq A_q^\rho$, where $\theta_0=ps\theta_2/q+s\theta_1/q$.
	
	Conversely, assume that $w^s\in A_q^\rho$. Then there exists $\theta_1\geq 0$ such that $w^s\in A_q^{\rho,\theta_1}$. By Jensen's inequality we obtain that
	\begin{align*}
	\left(\frac{1}{|Q|}\int_Q w\right)^{1/p}\left(\frac{1}{|Q|}\int_Q w^{1-p'}\right)^{1/p'}&\leq \left(\frac{1}{|Q|}\int_Q w^s\right)^{1/(ps)}\left(\frac{1}{|Q|}\int_Q w^{(1-q')s}\right)^{(q-1)/(sp)}\\
	&\leq \left[\left(\frac{1}{|Q|}\int_Q w^s\right)^{1/q}\left(\frac{1}{|Q|}\int_Q w^{(1-q')s}\right)^{1/q'}\right]^{q/(sp)}\\
	&\leq [w^s]_{A_q^{\rho,\theta_1}}^{q/(sp)}\left(1+\frac{r}{\rho(x)}\right)^{q\theta_1/(sp)},
	\end{align*}
	that is, $w\in A_p^{\rho,q\theta_1/(sp)}\subseteq A_p^\rho$. In order to prove that $w\in \mathrm{RH}_s^\rho$, notice that
	\begin{align*}
	\frac{1}{|Q|}\int_Q w^s &= \left(\frac{1}{|Q|}\int_Q w^s\right)\left(\frac{1}{|Q|}\int_Q w^{s(1-q')}\right)^{q-1}\left(\frac{1}{|Q|}\int_Q w^{s(1-q')}\right)^{1-q}\\
	&\leq [w^s]_{A_q^{\rho,\theta_1}}^q\left(1+\frac{r}{\rho(x)}\right)^{\theta_1q}\left(\frac{1}{|Q|}\int_Q w^{1-p'}\right)^{(1-p)s}\\
	&\leq [w^s]_{A_q^{\rho,\theta_1}}^q\left(1+\frac{r}{\rho(x)}\right)^{\theta_1q}\left(\frac{1}{|Q|}\int_Q w\right)^s.
	\end{align*} 
	This implies that
	\[\left(\frac{1}{|Q|}\int_Q w^s\right)^{1/s}\leq [w^s]_{A_q^{\rho,\theta_1}}^{q/s}\left(1+\frac{r}{\rho(x)}\right)^{\theta_1q/s}\left(\frac{1}{|Q|}\int_Q w\right)\]
	and therefore $w\in\mathrm{RH}_s^{\rho,\theta_1q/s}\subseteq \mathrm{RH}_s^\rho$.
\end{proof}

\begin{rem}\label{obs: existen potencias mayores que 1 de A_1,rho en A1,rho}
	The previous result remains true when $p=1$. If $u$ is a weight in $A_1^\rho$, then there exists $s>1$ such that $u\in \mathrm{RH}_s^\rho$. Therefore, this lemma establishes that $u^s\in A_1^\rho$. In conclusion, if $u\in A_1^\rho$ then there exists $\varepsilon_0>0$ such that $u^{1+\varepsilon}\in A_1^\rho$, for every $0<\varepsilon\leq \varepsilon_0$. 
\end{rem}

\begin{lem}\label{lema: w en A1,rho implica w^{1-p} en RHinfty,rho}
	Let $w\in A_1^\rho$. Then $w^{1-p}\in A_p^\rho\cap \mathrm{RH}_\infty^\rho$, for every $1<p<\infty$. 
\end{lem}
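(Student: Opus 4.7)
The plan is to split the conclusion into two separate claims and handle each by a direct computation from the definitions.

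For the membership $w^{1-p}\in A_p^\rho$, I would simply invoke part (b) of Lemma~\ref{lema: propiedades clase Ap,rho} with the pair of $A_1^\rho$ weights $(u,v)=(1,w)$, since the constant weight $1$ obviously lies in $A_1^\rho$. This yields $1\cdot w^{1-p}\in A_p^\rho$ with no further work.

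For the reverse Hölder part $w^{1-p}\in\mathrm{RH}_\infty^\rho$, the key observation is that $1-p<0$, so $\sup_Q w^{1-p}=(\inf_Q w)^{1-p}$. Since $w\in A_1^\rho$ there exists $\theta\geq 0$ such that for every cube $Q=Q(x,r)$,
\[\inf_Q w\;\geq\; C^{-1}\left(1+\tfrac{r}{\rho(x)}\right)^{-\theta}\frac{1}{|Q|}\int_Q w.\]
Raising both sides to the negative power $1-p$ reverses the inequality and gives
\[\sup_Q w^{1-p}\;\leq\;C^{\,p-1}\left(1+\tfrac{r}{\rho(x)}\right)^{\theta(p-1)}\!\left(\frac{1}{|Q|}\int_Q w\right)^{1-p}.\]
Now I would apply Jensen's inequality to the convex function $\phi(t)=t^{1-p}$ (convex because $\phi''(t)=p(p-1)t^{-p-1}>0$) to obtain
\[\left(\frac{1}{|Q|}\int_Q w\right)^{1-p}\;\leq\;\frac{1}{|Q|}\int_Q w^{1-p}.\]
Chaining these estimates yields exactly the $\mathrm{RH}_\infty^{\rho,\theta(p-1)}$ condition for $w^{1-p}$, and hence $w^{1-p}\in\mathrm{RH}_\infty^\rho$.

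There is no genuine obstacle here: the only point that requires a moment's care is tracking the sign of $1-p$, both in identifying $\sup_Q w^{1-p}$ with $(\inf_Q w)^{1-p}$ and in using that $t\mapsto t^{1-p}$ is convex (not concave) for $p>1$, so that Jensen runs in the direction needed. Everything else is a one-line application of previously stated lemmas.
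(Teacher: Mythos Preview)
Your proof is correct. The $A_p^\rho$ part is handled identically to the paper, via Lemma~\ref{lema: propiedades clase Ap,rho}\ref{item: lema: propiedades clase Ap,rho - item b}. For the $\mathrm{RH}_\infty^\rho$ part your argument is valid and in fact more direct than the paper's: you work straight from the definition, using $\sup_Q w^{1-p}=(\inf_Q w)^{1-p}$ together with the $A_1^{\rho,\theta}$ bound and Jensen for the convex map $t\mapsto t^{1-p}$.

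The paper instead routes the same computation through the minimal operator $\mathcal{M}^{\rho,\tilde\theta}$ and the characterization in Lemma~\ref{lema: relacion w-Mw para maximal y minimal}\ref{item: lema: relacion w-Mw para maximal y minimal - item b}: it applies H\"older to get $1\le(\fint_Q w)^{1/p'}(\fint_Q w^{1-p})^{1/p}$, inserts the factor $(1+r/\rho)^{\pm\theta/p'}$, bounds one piece by $M^{\rho,\theta}w(x)$, takes the infimum over cubes to produce $\mathcal{M}^{\rho,\tilde\theta}w^{1-p}(x)$, and then invokes $M^{\rho,\theta}w\le Cw$. The H\"older step there is exactly your Jensen step in disguise (both say $(\fint_Q w)^{1-p}\le \fint_Q w^{1-p}$), so the core content is the same; what you gain is that you never need to introduce the minimal operator or Lemma~\ref{lema: relacion w-Mw para maximal y minimal} at all, which makes the argument self-contained from the definitions.
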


\begin{proof}
	The fact that $w^{1-p}\in A_p^\rho$ follows from item~\ref{item: lema: propiedades clase Ap,rho - item b} in Lemma~\ref{lema: propiedades clase Ap,rho}. To prove that $w^{1-p}\in \mathrm{RH}_\infty^\rho$ we shall see that there exist $C>0$ and $\theta\geq 0$ such that $w^{1-p}(x)\leq C\mathcal{M}^{\rho,\theta}w^{1-p}(x)$, for almost every $x$. Then, the conclusion follows immediately from item~\ref{item: lema: relacion w-Mw para maximal y minimal - item b} of Lemma~\ref{lema: relacion w-Mw para maximal y minimal}. 
	
	Since $w\in A_1^\rho$, there exists $\theta\geq 0$ such that $w\in A_1^{\rho,\theta}$. For $x\in \mathbb{R}^d$ and every cube $Q=Q(x_0,r_0)$ that contains $x$ we have
	\begin{align*}
	1&\leq \left(\frac{1}{|Q|}\int_Q w\right)^{1/p'}\left(\frac{1}{|Q|}\int_Q w^{1-p}\right)^{1/p}\left(1+\frac{r_0}{\rho(x_0)}\right)^{-\theta/p'}\left(1+\frac{r_0}{\rho(x_0)}\right)^{\theta/p'}\\
	&\leq \left(M^{\rho,\theta}w(x)\right)^{1/p'}\left[\left(1+\frac{r_0}{\rho(x_0)}\right)^{\theta/(p'-1)}\left(\frac{1}{|Q|}\int_Q w^{1-p}\right)\right]^{1/p}.
	\end{align*}
	By taking the infimum over such cubes we obtain
	\[1\leq \left(M^{\rho,\theta}w(x)\right)^{1/p'}\left(\mathcal{M}^{\rho,\tilde\theta}w^{1-p}(x)\right)^{1/p},\]
	or equivalently
	\[1\leq \left(M^{\rho,\theta}w(x)\right)\left(\mathcal{M}^{\rho,\tilde\theta}w^{1-p}(x)\right)^{p'-1},\]
	where $\tilde \theta =\theta/(p'-1)$. This inequality combined with Lemma~\ref{lema: relacion w-Mw para maximal y minimal} allow us to conclude that there exists $C>0$ such that
	\[w^{-1/(p'-1)}(x)\leq C\mathcal{M}^{\rho,\tilde\theta}w^{1-p}(x),\]
	or
	\[w^{1-p}(x)\leq C\mathcal{M}^{\rho,\tilde\theta}w^{1-p}(x),\]
	which implies, again by item~\ref{item: lema: relacion w-Mw para maximal y minimal - item b} in Lemma~\ref{lema: relacion w-Mw para maximal y minimal}, that $w^{1-p}\in \mathrm{RH}_\infty^{\rho, \tilde \theta}\subseteq \mathrm{RH}_\infty^{\rho}$.
\end{proof}

As an immediate consequence of this lemma we have the following result.

\begin{cor}\label{coro: potencias negativas de A1,rho estan en RHinfty,rho}
	If $w\in A_1^\rho$ then $w^{-r}\in \mathrm{RH}_\infty^\rho$, for every $r>0$.
\end{cor}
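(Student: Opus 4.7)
The plan is very short: the corollary is an \emph{immediate} consequence of Lemma~\ref{lema: w en A1,rho implica w^{1-p} en RHinfty,rho}, which tells us that for $w\in A_1^\rho$ and any $1<p<\infty$ one has $w^{1-p}\in A_p^\rho\cap \mathrm{RH}_\infty^\rho$. The exponent $1-p$ traces out the full open interval $(-\infty,0)$ as $p$ ranges over $(1,\infty)$, so every negative power of $w$ is already covered.

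Concretely, given any $r>0$, I would set $p:=1+r$, which satisfies $p>1$, and observe that $1-p=-r$. Applying Lemma~\ref{lema: w en A1,rho implica w^{1-p} en RHinfty,rho} with this choice of $p$ yields
\[
w^{-r}\;=\;w^{1-p}\;\in\;A_p^\rho\cap \mathrm{RH}_\infty^\rho\;\subseteq\;\mathrm{RH}_\infty^\rho,
\]
which is exactly the desired conclusion. There is no real obstacle here: the work has already been done in the preceding lemma, and the corollary is merely a re-parametrization of the negative exponent.
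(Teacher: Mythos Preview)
Your argument is correct and is exactly the approach the paper has in mind: the corollary is stated there as an immediate consequence of Lemma~\ref{lema: w en A1,rho implica w^{1-p} en RHinfty,rho}, and your choice $p=1+r$ makes that explicit.
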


We now proceed with the proof of Proposition~\ref{propo: pesos de Ainf,rho se factorizan}.

\begin{proof}[Proof of Proposition~\ref{propo: pesos de Ainf,rho se factorizan}]
	By virtue of Lemma~\ref{lema: relacion Ap,rho-RHs-rho con Aq-rho}, we have that $w^s\in A_q^\rho$. From item~\ref{item: lema: propiedades clase Ap,rho - item c} in Lemma~\ref{lema: propiedades clase Ap,rho}, there exist two weights $v_1$ and $v_2$ in $A_1^\rho$ such that $w^s=v_1v_2^{1-q}$. Then $w=v_1^{1/s}v_2^{(1-q)/s}$. We define $w_1=v_1^{1/s}$ and $w_2=v_2^{(1-q)/s}=v_2^{1-p}$.
	
	Observe that $w_1\in A_1^\rho$, since $v_1\in A_1^\rho$. Thus
	\begin{align*}
	\left(\frac{1}{|Q|}\int_Q w_1^s\right)^{1/s}&=\left(\frac{1}{|Q|}\int_Q v_1\right)^{1/s}\\
	&\leq \left([v_1]_{A_1^{\rho,\theta_1}} \left(1+\frac{r}{\rho(x)}\right)^{\theta_1}\inf_Q v_1\right)^{1/s}\\
	&\leq [v_1]_{A_1^{\rho,\theta_1}}^{1/s} \left(1+\frac{r}{\rho(x)}\right)^{\theta_1/s}\inf_Q v_1^{1/s}\\
	&\leq [v_1]_{A_1^{\rho,\theta_1}}^{1/s}\left(1+\frac{r}{\rho(x)}\right)^{\theta_1/s}\left(\frac{1}{|Q|}\int_Q w_1\right),
	\end{align*}
	for every cube $Q$. This establishes that $w_1\in \mathrm{RH}_s^{\rho,\theta_1/s}$ and consequently $w_1\in A_1^\rho\cap \mathrm{RH}_s^\rho$.
	
	On the other hand, Lemma~\ref{lema: w en A1,rho implica w^{1-p} en RHinfty,rho} yields $w_2\in A_p^\rho\cap \mathrm{RH}_\infty^\rho$. This completes the proof. \qedhere
\end{proof}

The following properties will be useful in the sequel.

\begin{lem}\label{lema: producto de promedios acotado por promedio del producto, p y p'}
	Let $1<p<\infty$. Let $u$ and $v$ two weights that verify $u\in A_p^\rho$ and $v\in A_{p'}^\rho$. Then, there exist $C>0$ and $\theta\geq0$ such that
	\[\left(\frac{1}{|Q|}\int_Q u\right)^{1/p}\left(\frac{1}{|Q|}\int_Q v\right)^{1/p'}\leq C\left(\frac{1}{|Q|}\int_Q u^{1/p}v^{1/p'}\right)\left(1+\frac{r}{\rho(x)}\right)^{\theta},\]
	for every cube  $Q$.
\end{lem}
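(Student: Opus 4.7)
My plan is to reduce the left-hand side to a geometric mean by invoking Jensen's inequality inside the defining conditions of $A_p^\rho$ and $A_{p'}^\rho$, and then to lift the geometric mean back up to an arithmetic mean on the right by a second application of Jensen's inequality in the opposite direction. Only the $\rho$-factors coming from the two weight conditions will survive, so $\theta$ can be taken as their sum.

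Concretely, pick $\theta_1,\theta_2\geq 0$ such that $u\in A_p^{\rho,\theta_1}$ and $v\in A_{p'}^{\rho,\theta_2}$. The defining inequality for $u$, combined with the Jensen-type lower bound $\tfrac{1}{|Q|}\int_Q u^{1-p'}\ge \exp\bigl((1-p')\tfrac{1}{|Q|}\int_Q \log u\bigr)$ and the identity $(p'-1)/p'=1/p$, should yield
$\bigl(\tfrac{1}{|Q|}\int_Q u\bigr)^{1/p}\le C(1+r/\rho(x))^{\theta_1}\exp\bigl(\tfrac{1}{p}\tfrac{1}{|Q|}\int_Q \log u\bigr)$. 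Applying exactly the same argument to $v$ with the roles of $p$ and $p'$ exchanged produces
$\bigl(\tfrac{1}{|Q|}\int_Q v\bigr)^{1/p'}\le C(1+r/\rho(x))^{\theta_2}\exp\bigl(\tfrac{1}{p'}\tfrac{1}{|Q|}\int_Q \log v\bigr)$.

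Multiplying these two estimates and using $1/p+1/p'=1$, the exponents combine neatly into $\tfrac{1}{|Q|}\int_Q \log(u^{1/p}v^{1/p'})$. A final application of Jensen's inequality in the usual direction, $\exp\bigl(\tfrac{1}{|Q|}\int_Q \log f\bigr)\le \tfrac{1}{|Q|}\int_Q f$ for $f>0$, then bounds this exponential by $\tfrac{1}{|Q|}\int_Q u^{1/p}v^{1/p'}$, giving the claim with $\theta=\theta_1+\theta_2$.

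I do not foresee any real obstacle here: the whole argument is two invocations of Jensen's inequality used in opposite directions, and the $\rho$-correction factors from the two weight conditions add without any interaction.
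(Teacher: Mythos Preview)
Your argument is correct. The only implicit point worth making explicit is that $\log u$ and $\log v$ are locally integrable, which follows from the weight conditions: since $u^{1-p'}$ is locally integrable (the $A_p^\rho$ condition forces this) and $\log^+(1/u)\leq C_\varepsilon u^{-\varepsilon}$ for any $\varepsilon>0$, one gets $\log u\in L^1_{\mathrm{loc}}$; similarly for $v$. With that understood, the two applications of Jensen are legitimate and the conclusion follows with $\theta=\theta_1+\theta_2$.

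The paper proceeds slightly differently. Instead of passing to geometric means, it multiplies the two raw weight inequalities to obtain
\[
\left(\frac{1}{|Q|}\int_Q u\right)^{1/p}\left(\frac{1}{|Q|}\int_Q v\right)^{1/p'}
\leq C\left(1+\frac{r}{\rho(x)}\right)^{\theta_1+\theta_2}
\left(\frac{1}{|Q|}\int_Q u^{1-p'}\right)^{-1/p'}\left(\frac{1}{|Q|}\int_Q v^{1-p}\right)^{-1/p},
\]
then uses H\"older with exponents $p',p$ to bound $\tfrac{1}{|Q|}\int_Q u^{-1/p}v^{-1/p'}$ by the product of the two negative-power averages, and finally applies Jensen in the form $\bigl(\tfrac{1}{|Q|}\int_Q f^{-1}\bigr)^{-1}\leq \tfrac{1}{|Q|}\int_Q f$ with $f=u^{1/p}v^{1/p'}$. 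So the paper routes through the harmonic mean of $u^{1/p}v^{1/p'}$ via H\"older plus one Jensen step, whereas you route through the geometric mean via two Jensen steps and no H\"older. Both give the same $\theta=\theta_1+\theta_2$; your version is arguably cleaner in that it uses only one tool, while the paper's version stays closer to the structure of the $A_p$ condition and never needs to introduce logarithms.
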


\begin{proof}
	By hypothesis, there exist two nonnegative numbers $\theta_1$ and $\theta_2$ such that
	\[\left(\frac{1}{|Q|}\int_Q u\right)^{1/p}\left(\frac{1}{|Q|}\int_Q u^{1-p'}\right)^{1/p'}\leq [u]_{A_p^{\rho,\theta_1}}\left(1+\frac{r}{\rho(x)}\right)^{\theta_1}\]
	and
	\[\left(\frac{1}{|Q|}\int_Q v\right)^{1/p'}\left(\frac{1}{|Q|}\int_Q v^{1-p}\right)^{1/p}\leq [v]_{A_{p'}^{\rho,\theta_2}}\left(1+\frac{r}{\rho(x)}\right)^{\theta_2}.\]
	By multiplying these two inequalities, we get
	\begin{equation}\label{eq: lema: producto de promedios acotado por promedio del producto, p y p' - eq1}
	\left(\frac{1}{|Q|}\int_Q u\right)^{1/p}\left(\frac{1}{|Q|}\int_Q v\right)^{1/p'}\leq C \left(1+\frac{r}{\rho(x)}\right)^{\theta_1+\theta_2}\left(\frac{1}{|Q|}\int_Q u^{1-p'}\right)^{-1/p'}\left(\frac{1}{|Q|}\int_Q v^{1-p}\right)^{-1/p},
	\end{equation}
	where $C=[u]_{A_p^{\rho,\theta_1}}[v]_{A_{p'}^{\rho,\theta_2}}$. On the other hand, by H\"{o}lder inequality we have 
	\begin{equation}\label{eq: lema: producto de promedios acotado por promedio del producto, p y p' - eq2}
	\frac{1}{|Q|}\int_Q u^{-1/p}v^{-1/p'}\leq \left(\frac{1}{|Q|}\int_Q u^{1-p'}\right)^{1/p'}\left(\frac{1}{|Q|}\int_Q v^{1-p}\right)^{1/p}.
	\end{equation}
	By combining \eqref{eq: lema: producto de promedios acotado por promedio del producto, p y p' - eq1} and \eqref{eq: lema: producto de promedios acotado por promedio del producto, p y p' - eq2} with Jensen inequality we obtain
	\begin{align*}
	\left(\frac{1}{|Q|}\int_Q u\right)^{1/p}\left(\frac{1}{|Q|}\int_Q v\right)^{1/p'}&\leq C \left(1+\frac{r}{\rho(x)}\right)^{\theta_1+\theta_2}\left(\frac{1}{|Q|}\int_Q u^{-1/p}v^{-1/p'}\right)^{-1}\\
	&\leq C\left(1+\frac{r}{\rho(x)}\right)^{\theta_1+\theta_2}\frac{1}{|Q|}\int_Q u^{1/p}v^{1/p'}.\qedhere
	\end{align*}
\end{proof}

\begin{prop}\label{propo: relacion RHs con Ainf}
	Let $w$ be a weight. Then $w\in \mathrm{RH}_s^\rho$ if and only if $w^s\in A_\infty^\rho$.
\end{prop}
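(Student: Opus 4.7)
The strategy is to prove both implications directly, relying on Hölder's inequality plus Lemma~\ref{lema: relacion Ap,rho-RHs-rho con Aq-rho} for the forward direction, and on a single application of Jensen's inequality for the reverse.

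\emph{Forward direction} ($w\in \mathrm{RH}_s^\rho \Rightarrow w^s\in A_\infty^\rho$). I would first verify that $w$ itself satisfies the characterization~\eqref{eq: condicion Ainf,rho(u)} with $u=1$. Fix a cube $Q=Q(x,r)$ and a measurable set $E\subseteq Q$. Hölder's inequality with exponents $s$ and $s'$ gives
\[\int_E w \leq |E|^{1/s'}\left(\int_Q w^s\right)^{1/s}= |E|^{1/s'}|Q|^{1/s}\left(\frac{1}{|Q|}\int_Q w^s\right)^{1/s},\]
and the $\mathrm{RH}_s^{\rho,\theta_0}$ hypothesis bounds the last factor by $C(1+r/\rho(x))^{\theta_0}\,w(Q)/|Q|$. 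Rearranging yields
\[\frac{w(E)}{w(Q)} \leq C\left(1+\frac{r}{\rho(x)}\right)^{\theta_0}\left(\frac{|E|}{|Q|}\right)^{1/s'},\]
so $w\in A_\infty^\rho$ and therefore $w\in A_p^\rho$ for some $p>1$. Since also $w\in \mathrm{RH}_s^\rho$, Lemma~\ref{lema: relacion Ap,rho-RHs-rho con Aq-rho} applied with $q=s(p-1)+1$ gives $w^s\in A_q^\rho\subseteq A_\infty^\rho$.

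\emph{Reverse direction} ($w^s\in A_\infty^\rho \Rightarrow w\in \mathrm{RH}_s^\rho$). I would fix $q>1$ and $\theta_0\geq 0$ with $w^s\in A_q^{\rho,\theta_0}$ (enlarging $q$ if needed so that $q>1$). The key step is that $t\mapsto t^{-s/(q-1)}$ is convex on $(0,\infty)$, so Jensen's inequality applied to $w$, together with the identity $s(1-q')=-s/(q-1)$, yields
\[\left(\frac{1}{|Q|}\int_Q w\right)^{-s/(q-1)} \leq \frac{1}{|Q|}\int_Q w^{s(1-q')}.\]
Raising to $1/q'=(q-1)/q$ and plugging the resulting lower bound on $\bigl(\tfrac{1}{|Q|}\int_Q w^{s(1-q')}\bigr)^{1/q'}$ into the $A_q^{\rho,\theta_0}$ condition for $w^s$ gives
\[\left(\frac{1}{|Q|}\int_Q w^s\right)^{1/q}\left(\frac{1}{|Q|}\int_Q w\right)^{-s/q} \leq C\left(1+\frac{r}{\rho(x)}\right)^{\theta_0}.\]
Raising to the power $q/s$ produces precisely the $\mathrm{RH}_s^{\rho,\theta_0 q/s}$ inequality.

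The main obstacle is the forward direction, specifically the step that turns the Hölder bound (which directly matches the $A_\infty^\rho$ characterization~\eqref{eq: condicion Ainf,rho(u)}) into concrete membership $w\in A_p^\rho$ for some finite $p$, since that equivalence of characterizations is stated but not reproved in the excerpt. The reverse direction, by contrast, is a clean Jensen computation that requires no external input beyond the definition of $A_q^\rho$.
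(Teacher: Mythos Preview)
Your proof is correct and follows essentially the same route as the paper. The forward direction is identical: Hölder gives the $A_\infty^\rho$ characterization~\eqref{eq: condicion Ainf,rho(u)} for $w$, whence $w\in A_p^\rho$ for some $p$, and then Lemma~\ref{lema: relacion Ap,rho-RHs-rho con Aq-rho} finishes. For the reverse direction the paper simply invokes Lemma~\ref{lema: relacion Ap,rho-RHs-rho con Aq-rho} (with $p_0=1+(q_0-1)/s$), whereas you unpack the relevant half of that lemma directly via Jensen; the computation is the same one appearing in the lemma's proof, so the two arguments coincide in substance. Your flagged ``obstacle'' --- that condition~\eqref{eq: condicion Ainf,rho(u)} is asserted equivalent to $A_\infty^\rho$ but not proved in the paper --- is used equally by the paper's own proof, so you are on the same footing.
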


\begin{proof}
	Assume that $w\in \mathrm{RH}_s^\rho$. Then there exists $\theta_1\geq 0$ such that $w\in\mathrm{RH}_s^{\rho,\theta_1}$. If we prove that $\mathrm{RH}_s^\rho\subseteq A_\infty^\rho$, then there would exist $p_0>1$ such that $w\in A_{p_0}^\rho\cap\mathrm{RH}_s^\rho$. Lemma~\ref{lema: relacion Ap,rho-RHs-rho con Aq-rho} establishes that $w^s\in A_{q_0}^\rho$, where $q_0=s(p_0-1)+1$. This would imply that $w^s\in A_\infty^\rho$. Therefore, let us prove that $\mathrm{RH}_s^\rho\subseteq A_\infty^\rho$. Fix a cube $Q$ and $E$ a measurable subset of $Q$. We have
	\begin{align*}
	w(E)&\leq \left(\int_E w^s\right)^{1/s}|E|^{1/s'}\\
	&\leq \left(\frac{1}{|Q|}\int_Q w^s\right)^{1/s}|Q|^{1/s}|E|^{1/s'}\\
	&\leq [w]_{\mathrm{RH}_s^{\rho,\theta_1}}w(Q)\left(1+\frac{r}{\rho(x)}\right)^{\theta_1}\left(\frac{|E|}{|Q|}\right)^{1/s'},
	\end{align*}
	and this yields
	\[\frac{w(E)}{w(Q)}\leq [w]_{\mathrm{RH}_s^{\rho,\theta_1}}\left(1+\frac{r}{\rho(x)}\right)^{\theta_1}\left(\frac{|E|}{|Q|}\right)^{1/s'},\]
	that is, $w\in A_\infty^\rho$.
	
	Conversely, assume that $w^s\in A_\infty^\rho$. This implies that $w^s\in A_{q_0}^\rho$, for some $q_0>1$. By virtue of Lemma~\ref{lema: relacion Ap,rho-RHs-rho con Aq-rho}, we have that $w\in \mathrm{RH}_s^\rho\cap A_{p_0}^\rho$, where $p_0=1+(q_0-1)/s$.
\end{proof}

\medskip

\begin{lem}\label{lema: Holder al reves con producto de pesos}
	Let $p>1$, $u\in\mathrm{RH}_p^\rho$ and $v\in \mathrm{RH}_{p'}^\rho$. Then there exist $\theta\geq 0$ and a positive constant $C$ such that
	\[\left(\int_Q u^p\right)^{1/p}\left(\int_Q v^{p'}\right)^{1/p'}\leq C\left(1+\frac{r}{\rho(x)}\right)^{\theta}\left(\int_Q uv\right),\]
	for every cube $Q$.
\end{lem}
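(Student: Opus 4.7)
My plan is to prove the claim by applying Lemma~\ref{lema: producto de promedios acotado por promedio del producto, p y p'} to the rescaled weights $u^p$ and $v^{p'}$, with exponent $p$. Indeed, if one can verify the hypotheses $u^p\in A_p^\rho$ and $v^{p'}\in A_{p'}^\rho$, that lemma applied to the pair $(u^p,v^{p'})$ yields directly
\[\left(\tfrac{1}{|Q|}\int_Q u^p\right)^{1/p}\left(\tfrac{1}{|Q|}\int_Q v^{p'}\right)^{1/p'}\le C\left(1+\tfrac{r}{\rho(x)}\right)^\theta\tfrac{1}{|Q|}\int_Q (u^p)^{1/p}(v^{p'})^{1/p'}=C\left(1+\tfrac{r}{\rho(x)}\right)^\theta\tfrac{1}{|Q|}\int_Q uv,\]
and multiplying through by $|Q|=|Q|^{1/p+1/p'}$ produces exactly the integral-form inequality claimed.

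To secure $u^p\in A_p^\rho$ (and, by the symmetric argument, $v^{p'}\in A_{p'}^\rho$), I would first translate the reverse-H\"older hypothesis via Proposition~\ref{propo: relacion RHs con Ainf} into $u^p\in A_\infty^\rho$, and then use Lemma~\ref{lema: relacion Ap,rho-RHs-rho con Aq-rho} (with $s=p$), which says $u^p\in A_p^\rho$ is equivalent to $u\in A_{2-1/p}^\rho\cap\mathrm{RH}_p^\rho$. The missing $A_{2-1/p}^\rho$ membership I would extract from the factorization of Proposition~\ref{propo: pesos de Ainf,rho se factorizan}: writing $u=u_1u_2$ with $u_1\in A_1^\rho\cap\mathrm{RH}_p^\rho$ and $u_2\in\mathrm{RH}_\infty^\rho$, Lemma~\ref{lema: relacion Ap,rho-RHs-rho con Aq-rho} gives $u_1^p\in A_1^\rho\subset A_p^\rho$, while the $\mathrm{RH}_\infty^\rho$ piece contributes only the pointwise bound $u_2^p\le C(1+r/\rho(x))^{p\theta'}\,\frac{1}{|Q|}\int_Q u_2^p$. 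Multiplying these two contributions and absorbing the $\mathrm{RH}_\infty^\rho$-constant into the $(1+r/\rho(x))^\theta$ factor recovers the desired $A_p^\rho$ condition for $u^p=u_1^pu_2^p$.

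The main obstacle I anticipate lies exactly in this verification step: the passage from $u^p\in A_\infty^\rho$ (what comes for free) to $u^p\in A_p^\rho$ (what Lemma~\ref{lema: producto de promedios acotado por promedio del producto, p y p'} requires) is not automatic, because the natural $A_\infty^\rho$-exponent of $u^p$ may exceed $p$. The flexibility provided by the $\rho$-factor in the definition of $A_p^{\rho,\theta}$ is crucial to absorb both this exponent discrepancy and the $\mathrm{RH}_\infty^\rho$-contribution from the factorization; careful tracking of the constants across the reverse-H\"older, factorization, and $A_1^\rho\subset A_p^\rho$ embedding is where the delicate part of the argument sits. Once this has been established, the identification $U^{1/p}V^{1/p'}=uv$ inside Lemma~\ref{lema: producto de promedios acotado por promedio del producto, p y p'} delivers the claim with no further work.
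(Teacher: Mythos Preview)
Your plan hinges on showing $u^p\in A_p^\rho$ (and $v^{p'}\in A_{p'}^\rho$), but this claim is false in general, so the argument cannot be completed as written. A concrete counterexample: take $\rho\equiv 1$, which is a valid critical radius function (with $C_0=1$), and $u(x)=|x|^\alpha$ with $\alpha>d/p'$. For every $\alpha>0$ one has $|x|^\alpha\in\mathrm{RH}_\infty\subset\mathrm{RH}_p$, hence $u\in\mathrm{RH}_p^\rho$. However $u^p=|x|^{\alpha p}$ lies in $A_p$ only when $\alpha p<d(p-1)$, i.e.\ $\alpha<d/p'$, and since $\rho\equiv 1$ the factor $(1+r/\rho)^\theta$ stays bounded on small cubes centred at the origin, so the $A_p^\rho$ condition fails there as well. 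Thus $u^p\notin A_p^\rho$. The same example shows that your factorization step cannot work either: it would in particular imply that whenever $w_1\in A_p^\rho$ and $w_2\in\mathrm{RH}_\infty^\rho$ one has $w_1w_2\in A_p^\rho$, but taking $w_1=1$ and $w_2=|x|^\beta$ with $\beta>d(p-1)$ refutes this. The ``flexibility provided by the $\rho$-factor'' that you appeal to is only available on large cubes; it does nothing to repair a genuine local $A_p$ failure.

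The paper's argument navigates exactly this obstruction by inserting an auxiliary exponent $0<\delta<1$. Using that $u\in\mathrm{RH}_p^\rho\subset A_\infty^\rho$ gives $u\in A_{q_0}^\rho$ for some (possibly large) $q_0$; choosing $\delta$ small enough (depending on $q_0$ and the analogous exponent for $v$) forces $u^{\delta p}\in A_p^\rho\cap\mathrm{RH}_{1/\delta}^\rho$ and $v^{\delta p'}\in A_{p'}^\rho\cap\mathrm{RH}_{1/\delta}^\rho$ via Lemma~\ref{lema: relacion Ap,rho-RHs-rho con Aq-rho}. One then applies Lemma~\ref{lema: producto de promedios acotado por promedio del producto, p y p'} to the pair $(u^{\delta p},v^{\delta p'})$, uses the $\mathrm{RH}_{1/\delta}^\rho$ bound to pass from $\int_Q u^{\delta p}$ back up to $(\int_Q u^p)^\delta$ on the left, and Jensen's inequality to pass from $\int_Q(uv)^\delta$ down to $(\int_Q uv)^\delta$ on the right; raising to the power $1/\delta$ finishes. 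The essential idea you are missing is this scaling-down step: one cannot force $u^p$ itself into $A_p^\rho$, but one can always force a sufficiently small power $u^{\delta p}$ there, and the reverse H\"older information is precisely what lets you recover the full power afterwards.
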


\begin{proof}
	Let us prove, as a first step, that there exists $0<\delta<1$ such that $u^{\delta p}\in A_p^\rho\cap\mathrm{RH}_{1/\delta}^{\rho}$ and $v^{\delta p'}\in A_{p'}^\rho\cap\mathrm{RH}_{1/\delta}^{\rho}$. Since $u\in \mathrm{RH}_p^\rho$ and $v\in  \mathrm{RH}_{p'}^\rho$, Proposition~\ref{propo: relacion RHs con Ainf} implies that there exist  $1<q,r<\infty$ verifying $u\in A_q^\rho$ and $v\in A_r^\rho$. Define $p_0=\min\{p,p'\}$, $q_0=\max\{q,r\}$ and $\delta=(p_0-1)/(q_0-1)$. Observe that $0<\delta<1$ because $q_0$ can be chosen arbitrarily large. Also notice that $u^p$ and $v^{p'}$ belong to the $A_{q_0}^\rho$ class, with $q_0=1+(p_0-1)/\delta$. Lemma~\ref{lema: relacion Ap,rho-RHs-rho con Aq-rho} yields $u^{\delta p}\in A_{p_0}^\rho\cap \mathrm{RH}_{1/\delta}^\rho$ and $v^{\delta p'}\in A_{p_0}^\rho\cap \mathrm{RH}_{1/\delta}^\rho$. Therefore
	\[u^{\delta p}\in A_{p}^\rho\cap \mathrm{RH}_{1/\delta}^\rho\quad\textrm{ and }\quad v^{\delta p'}\in A_{p'}^\rho\cap \mathrm{RH}_{1/\delta}^\rho.\] 
	By combining these facts with Lemma~\ref{lema: producto de promedios acotado por promedio del producto, p y p'} we obtain that
	\begin{align*}
	\left(\frac{1}{|Q|}\int_Q u^p\right)^{\delta/p}\left(\frac{1}{|Q|}\int_Q v^{p'}\right)^{\delta/p'}&\leq [u^{\delta p}]_{\mathrm{RH}_{1/\delta}^{\rho,\theta_1}}^{1/p}[v^{\delta p'}]_{\mathrm{RH}_{1/\delta}^{\rho,\theta_2}}^{1/p'}\left(1+\frac{r}{\rho(x)}\right)^{\theta_1+\theta_2}\\
	&\qquad\times \left(\frac{1}{|Q|}\int_Q u^{\delta p}\right)^{1/p}\left(\frac{1}{|Q|}\int_Q v^{\delta p'}\right)^{1/p'}\\
	&\leq [u^{\delta p}]_{\mathrm{RH}_{1/\delta}^{\rho,\theta_1}}^{1/p}[v^{\delta p'}]_{\mathrm{RH}_{1/\delta}^{\rho,\theta_2}}^{1/p'}\left(1+\frac{r}{\rho(x)}\right)^{\theta_1+\theta_2+\theta_3}\left(\frac{1}{|Q|}\int_Q (uv)^{\delta}\right)\\
	&\leq [u^{\delta p}]_{\mathrm{RH}_{1/\delta}^{\rho,\theta_1}}^{1/p}[v^{\delta p'}]_{\mathrm{RH}_{1/\delta}^{\rho,\theta_2}}^{1/p'}\left(1+\frac{r}{\rho(x)}\right)^{\theta_1+\theta_2+\theta_3}\left(\frac{1}{|Q|}\int_Q uv\right)^{\delta}.\\
	\end{align*}	
	By raising both sides to the power $1/\delta$ we can conclude the thesis.
\end{proof}

As a corollary, we have that the product of two weights in $\mathrm{RH}_\infty^\rho$ is a weight in the same class.

\begin{cor}\label{coro: producto de pesos en RHinf,rho esta en RHinf,rho}
	Let $u$ and $v$ be two weights belonging to the $\mathrm{RH}_\infty^\rho$ class. Then  $uv\in \mathrm{RH}_\infty^\rho$.
\end{cor}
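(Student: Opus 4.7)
The plan is to bound $\sup_Q uv$ above by the product of suprema, apply the $\mathrm{RH}_\infty^\rho$ hypothesis to each factor separately, and then use the previous lemma to re-combine the resulting product of averages into the average of the product.

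More concretely, fix a cube $Q = Q(x,r)$. First I would use the trivial bound $\sup_Q uv \leq (\sup_Q u)(\sup_Q v)$ and apply the defining inequality of $\mathrm{RH}_\infty^\rho$ to each factor, obtaining constants $\theta_1,\theta_2\geq 0$ such that
\[
\sup_Q uv \;\leq\; [u]_{\mathrm{RH}_\infty^{\rho,\theta_1}}[v]_{\mathrm{RH}_\infty^{\rho,\theta_2}}\left(1+\frac{r}{\rho(x)}\right)^{\theta_1+\theta_2}\left(\frac{1}{|Q|}\int_Q u\right)\left(\frac{1}{|Q|}\int_Q v\right).
\]
So the task reduces to dominating the product of averages $\bigl(\tfrac{1}{|Q|}\int_Q u\bigr)\bigl(\tfrac{1}{|Q|}\int_Q v\bigr)$ by the average $\tfrac{1}{|Q|}\int_Q uv$, up to a factor of the form $(1+r/\rho(x))^\theta$.

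For this step I would invoke Lemma~\ref{lema: Holder al reves con producto de pesos} with $p=p'=2$: since $\mathrm{RH}_\infty^\rho\subseteq \mathrm{RH}_q^\rho$ for every $1<q<\infty$, both $u$ and $v$ lie in $\mathrm{RH}_2^\rho$, and the lemma yields $\theta_3\geq 0$ with
\[
\left(\frac{1}{|Q|}\int_Q u^2\right)^{1/2}\left(\frac{1}{|Q|}\int_Q v^2\right)^{1/2}\leq C\left(1+\frac{r}{\rho(x)}\right)^{\theta_3}\frac{1}{|Q|}\int_Q uv.
\]
Jensen's inequality gives $\tfrac{1}{|Q|}\int_Q u \leq (\tfrac{1}{|Q|}\int_Q u^2)^{1/2}$, and analogously for $v$, so the product of $L^1$ averages on the left of the previous display is dominated by the $L^2$-average product, hence by a constant times $(1+r/\rho(x))^{\theta_3}$ times the average of $uv$. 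Chaining these two inequalities yields
\[
\sup_Q uv \;\leq\; C\left(1+\frac{r}{\rho(x)}\right)^{\theta_1+\theta_2+\theta_3}\frac{1}{|Q|}\int_Q uv,
\]
which is exactly the $\mathrm{RH}_\infty^{\rho,\theta_1+\theta_2+\theta_3}$ condition for $uv$.

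There is no real obstacle here; the only subtlety is to make sure the powers of $(1+r/\rho(x))$ accumulated from the two steps land on the right-hand side in the same form required by the definition, which they do since the definition allows any $\theta\geq 0$. The argument is a direct application of the previous lemma combined with a termwise use of the $\mathrm{RH}_\infty^\rho$ bound on each factor.
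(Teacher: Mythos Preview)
Your argument is correct and follows essentially the same route as the paper's proof: bound the product pointwise (or via the product of suprema) using the $\mathrm{RH}_\infty^\rho$ condition on each factor, pass to $L^2$ averages by Jensen, and then invoke Lemma~\ref{lema: Holder al reves con producto de pesos} with $p=p'=2$ to recombine into the average of $uv$. The only cosmetic difference is that the paper writes the first step as a pointwise bound $u(x)v(x)\leq\cdots$ rather than starting from $\sup_Q uv\leq(\sup_Q u)(\sup_Q v)$, but the content is identical.
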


\begin{proof}
	The hypothesis implies that both $u$ and $v$ belong to the $\mathrm{RH}_2^\rho$ class. By applying Lemma~\ref{lema: Holder al reves con producto de pesos} with $p=p'=2$ we get
	\begin{align*}
	u(x)v(x)&\leq [u]_{\mathrm{RH}_\infty^{\rho,\theta_1}}[v]_{\mathrm{RH}_\infty^{\rho,\theta_2}}\left(1+\frac{r}{\rho(x)}\right)^{\theta_1+\theta_2}\left(\frac{1}{|Q|}\int_Q u\right)\left(\frac{1}{|Q|}\int_Q v\right)\\
	&\leq [u]_{\mathrm{RH}_\infty^{\rho,\theta_1}}[v]_{\mathrm{RH}_\infty^{\rho,\theta_2}}\left(1+\frac{r}{\rho(x)}\right)^{\theta_1+\theta_2}\left(\frac{1}{|Q|}\int_Q u^2\right)^{1/2}\left(\frac{1}{|Q|}\int_Q v^2\right)^{1/2}\\
	&\leq C[u]_{\mathrm{RH}_\infty^{\rho,\theta_1}}[v]_{\mathrm{RH}_\infty^{\rho,\theta_2}}\left(1+\frac{r}{\rho(x)}\right)^{\theta_1+\theta_2+\theta_3}\left(\frac{1}{|Q|}\int_Q uv\right).\qedhere
	\end{align*}
\end{proof}

\medskip

The following lemma establishes an important relation between the classes $A_p^\rho(u)$ and $A_p^\rho$. The corresponding classical version is given in \cite{CruzUribe-Martell-Perez}.

\begin{lem}\label{lema: u en A1,rho y v en Ap,rho implican uv en Ap,rho}
	Let $1\leq p\leq \infty$. If $u\in A_1^\rho$ and $v\in A_p^\rho(u)$ then $uv\in A_p^{\rho}$.
\end{lem}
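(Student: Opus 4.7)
The plan is to split into the three cases $p=1$, $1<p<\infty$, and $p=\infty$, handling them in this order since the last reduces to the previous ones.

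\textbf{Case $p=1$.} Starting from the defining inequalities, there exist $\theta_1,\theta_2\geq 0$ such that for every cube $Q=Q(x,r)$,
\[
\frac{1}{u(Q)}\int_Q vu\leq C\Bigl(1+\tfrac{r}{\rho(x)}\Bigr)^{\theta_1}\inf_Q v\quad\text{and}\quad \frac{1}{|Q|}\int_Q u\leq C\Bigl(1+\tfrac{r}{\rho(x)}\Bigr)^{\theta_2}\inf_Q u.
\]
Multiplying the first by $u(Q)/|Q|$ and substituting the second gives
\[
\frac{1}{|Q|}\int_Q uv\leq C\Bigl(1+\tfrac{r}{\rho(x)}\Bigr)^{\theta_1+\theta_2}\inf_Q u\,\inf_Q v\leq C\Bigl(1+\tfrac{r}{\rho(x)}\Bigr)^{\theta_1+\theta_2}\inf_Q uv,
\]
which is exactly $uv\in A_1^{\rho,\theta_1+\theta_2}$.

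\textbf{Case $1<p<\infty$.} The main trick is to rewrite $(uv)^{1-p'}=v^{1-p'}u\cdot u^{-p'}$ inside the second factor of the $A_p^\rho$ quantity and then use the $A_1^\rho$ condition on $u$ to pointwise control $u^{-p'}$. Concretely, since $u\in A_1^{\rho,\theta_2}$, for a.e.\ $y\in Q$ one has
\[
u(y)^{-1}\leq \frac{C\,|Q|}{u(Q)}\Bigl(1+\tfrac{r}{\rho(x)}\Bigr)^{\theta_2},
\]
hence
\[
\int_Q(uv)^{1-p'}\,dy=\int_Q v^{1-p'}u\cdot u^{-p'}\,dy\leq\Bigl(\tfrac{C|Q|}{u(Q)}\Bigr)^{p'}\Bigl(1+\tfrac{r}{\rho(x)}\Bigr)^{p'\theta_2}\int_Q v^{1-p'}u\,dy.
\]
On the other hand, the first factor satisfies $\int_Q uv = u(Q)\cdot \tfrac{1}{u(Q)}\int_Q uv$. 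Forming the product $\left(\tfrac{1}{|Q|}\int_Q uv\right)^{1/p}\left(\tfrac{1}{|Q|}\int_Q(uv)^{1-p'}\right)^{1/p'}$, the powers $(u(Q)/|Q|)^{1/p}$ and $(|Q|/u(Q))^{1/p}$ cancel exactly, leaving
\[
\left(\frac{1}{|Q|}\int_Q uv\right)^{1/p}\left(\frac{1}{|Q|}\int_Q(uv)^{1-p'}\right)^{1/p'}\leq C\Bigl(1+\tfrac{r}{\rho(x)}\Bigr)^{\theta_2}\left(\frac{1}{u(Q)}\int_Q vu\right)^{1/p}\left(\frac{1}{u(Q)}\int_Q v^{1-p'}u\right)^{1/p'}.
\]
Invoking $v\in A_p^{\rho,\theta_1}(u)$ on the right completes the estimate, yielding $uv\in A_p^{\rho,\theta_1+\theta_2}$.

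\textbf{Case $p=\infty$.} By definition of $A_\infty^\rho(u)=\bigcup_{q\geq 1}A_q^\rho(u)$, there is some $q\geq 1$ with $v\in A_q^\rho(u)$. Applying the previous two cases to this $q$ yields $uv\in A_q^\rho\subseteq A_\infty^\rho$.

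The only nontrivial step is the algebraic manipulation in the middle case: the right decomposition $(uv)^{1-p'}=v^{1-p'}u\cdot u^{-p'}$ together with the pointwise bound on $u^{-p'}$ coming from $A_1^\rho$ is what makes the $u(Q)/|Q|$ factors match so that the Lebesgue-measure averages collapse to $u(Q)$-averages and the hypothesis $v\in A_p^\rho(u)$ becomes directly applicable.
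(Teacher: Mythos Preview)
Your proof is correct and follows essentially the same approach as the paper: the same three-case split, the same multiplication of the two $A_1$-type inequalities when $p=1$, the identical decomposition $(uv)^{1-p'}=v^{1-p'}u\cdot u^{-p'}$ together with the pointwise bound $u^{-1}\leq C\,|Q|/u(Q)\,(1+r/\rho(x))^{\theta_2}$ in the case $1<p<\infty$, and the same reduction to a finite exponent when $p=\infty$. The only difference is cosmetic presentation of how the $u(Q)/|Q|$ factors are tracked before they cancel.
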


\begin{proof}
	We shall first consider the case $p=1$. Since $v\in A_1^\rho(u)$, we can find $\theta_1\geq 0$ for which the inequality
	\[\frac{1}{u(Q)}\int_Q vu\leq [v]_{A_1^{\rho,\theta_1}}\left(1+\frac{r}{\rho(x)}\right)^{\theta_1}\inf_Q v\]
	holds for every cube $Q$. Similarly, $u\in A_1^\rho$ implies that there exists $\theta_2\geq 0$ such that
	\[\frac{1}{|Q|}\int_Q u\leq [u]_{A_1^{\rho,\theta_2}}\left(1+\frac{r}{\rho(x)}\right)^{\theta_2}\inf_Q u,\]
	for every cube $Q$.
	Therefore,
	\begin{align*}
	\frac{1}{|Q|}\int_Q uv &=\frac{u(Q)}{|Q|}\frac{1}{u(Q)}\int_Q uv\\
	&\leq [u]_{A_1^{\rho,\theta_2}}\left(1+\frac{r}{\rho(x)}\right)^{\theta_2}\left(\inf_Q u\right)[v]_{A_1^{\rho,\theta_1}}\left(1+\frac{r}{\rho(x)}\right)^{\theta_1}\inf_Q v\\
	&\leq  [u]_{A_1^{\rho,\theta_2}}[v]_{A_1^{\rho,\theta_1}}\left(1+\frac{r}{\rho(x)}\right)^{\theta_1+\theta_2}\inf_Q uv,
	\end{align*}
	which means that $uv\in A_1^\rho$.
	
	We turn now to the case $1<p<\infty$. If $v\in A_p^{\rho,\theta_1}$, then
	\begin{align*}
	\left(\frac{1}{|Q|}\int_Q uv\right)^{1/p}\left(\frac{1}{|Q|}\int_Q (uv)^{1-p'}\right)^{1/p'}&=\frac{u(Q)}{|Q|}\left(\frac{1}{u(Q)}\int_Q uv\right)^{1/p}\left(\frac{1}{u(Q)}\int_Q v^{1-p'}uu^{-p'}\right)^{1/p'}\\
	&\leq [u]_{A_1^{\rho,\theta_2}}\left(1+\frac{r}{\rho(x)}\right)^{\theta_2}\left(\frac{1}{u(Q)}\int_Q uv\right)^{1/p}\\
	&\qquad\times\left(\frac{1}{u(Q)}\int_Q v^{1-p'}u\right)^{1/p'}\\
	&\leq [u]_{A_1^{\rho,\theta_2}}[v]_{A_p^{\rho,\theta_1}(u)}\left(1+\frac{r}{\rho(x)}\right)^{\theta_1+\theta_2},
	\end{align*}
	which gives us the desired estimate.
	
	Finally, if $p=\infty$ there exists $1<q<\infty$ such that $v\in A_q^\rho(u)$. By applying the previous case we get $uv\in A_q^\rho$ and so $uv\in A_\infty^\rho.$
\end{proof}

We are now in a position to state and prove a result that will play a fundamental role in our estimates.

\begin{thm}\label{teo: lema fundamental}
	Let $u\in A_1^\rho$ and $v$ a weight such that $uv\in A_\infty^\rho$. Then $u\in A_1^\rho(v)$, that is, there exist $C>0$ and $\theta\geq 0$ such that 
	\[\frac{uv(Q)}{v(Q)}\leq C\left(1+\frac{r}{\rho(x)}\right)^{\theta}\inf_Q u,\]
	for every cube $Q=Q(x,r)$.
\end{thm}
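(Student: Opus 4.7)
The plan is to combine the $A_1^\rho$ control of $u$ on $Q$ with the set--theoretic $A_\infty^\rho$ characterization of $uv$ given by \eqref{eq: condicion Ainf,rho(u)} (applied with the first weight equal to $1$), through a good set/bad set decomposition at the level $C_0 \inf_Q u$, and then close the argument by absorption. Specifically, fix a cube $Q=Q(x,r)$, set $m=\inf_Q u$, and for a constant $C_0\ge 1$ to be chosen at the end, decompose $Q=A\cup B$ with $A=\{y\in Q:u(y)\le C_0 m\}$ and $B=\{y\in Q:u(y)> C_0 m\}$.

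On the ``good set'' $A$ one estimates trivially
\[
\int_A uv\le C_0\,m\,v(Q).
\]
On the ``bad set'' $B$, Chebyshev's inequality together with $u\in A_1^{\rho,\theta_1}$ yields
\[
\frac{|B|}{|Q|}\le \frac{1}{C_0 m\,|Q|}\int_Q u\le \frac{[u]_{A_1^{\rho,\theta_1}}}{C_0}\Bigl(1+\tfrac{r}{\rho(x)}\Bigr)^{\theta_1}.
\]
Since $uv\in A_\infty^\rho$, there exist $C_2,\theta_2,\varepsilon>0$ with
\[
\frac{uv(B)}{uv(Q)}\le C_2\Bigl(1+\tfrac{r}{\rho(x)}\Bigr)^{\theta_2}\Bigl(\tfrac{|B|}{|Q|}\Bigr)^{\varepsilon}\le \frac{C_3}{C_0^{\varepsilon}}\Bigl(1+\tfrac{r}{\rho(x)}\Bigr)^{\theta_2+\varepsilon\theta_1}.
\]
Adding the two contributions gives the key estimate
\[
uv(Q)\le C_0\,m\,v(Q)+\frac{C_3}{C_0^{\varepsilon}}\Bigl(1+\tfrac{r}{\rho(x)}\Bigr)^{\theta_2+\varepsilon\theta_1}\,uv(Q).
\]

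The main (in fact, the only) delicate point is the absorption step. I would pick
\[
C_0=(2C_3)^{1/\varepsilon}\Bigl(1+\tfrac{r}{\rho(x)}\Bigr)^{(\theta_2+\varepsilon\theta_1)/\varepsilon}
\]
so that the coefficient in front of $uv(Q)$ on the right equals $1/2$. This is legitimate because $uv\in A_\infty^\rho\subseteq\bigcup_{p}A_p^\rho$ is locally integrable (so $uv(Q)<\infty$ and can be moved to the left), and because the $Q$-dependence of $C_0$ enters only through the factor $(1+r/\rho(x))^{\theta}$, which is exactly what is allowed on the right--hand side of the target inequality. After absorbing one obtains $uv(Q)\le 2C_0\,m\,v(Q)$, that is,
\[
\frac{uv(Q)}{v(Q)}\le C\Bigl(1+\tfrac{r}{\rho(x)}\Bigr)^{\theta}\inf_Q u,\qquad\theta=\frac{\theta_2+\varepsilon\theta_1}{\varepsilon},
\]
as desired. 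If one worries about $m$ being only an essential infimum, it may be replaced throughout by $\fint_Q u$, which is comparable to $\inf_Q u$ by the $A_1^\rho$ hypothesis, and no step changes.
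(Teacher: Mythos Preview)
Your argument is correct. The good set/bad set split at level $C_0\inf_Q u$, the Chebyshev bound on $|B|/|Q|$ via $u\in A_1^{\rho,\theta_1}$, the application of the $A_\infty^\rho$ characterization \eqref{eq: condicion Ainf,rho(u)} with underlying weight $1$, and the absorption with $C_0$ chosen to depend on $(1+r/\rho(x))$ all go through exactly as you indicate; local integrability of $uv$ justifies the subtraction, and the $Q$-dependence of $C_0$ produces precisely the allowed factor on the right.

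This is a genuinely different route from the paper's proof. The paper does \emph{not} use a level-set decomposition or any absorption; instead it invokes the factorization in Proposition~\ref{propo: pesos de Ainf,rho se factorizan} to write $uv=w_1w_2$ with $w_1\in A_1^\rho\cap\mathrm{RH}_s^\rho$ and $w_2\in A_p^\rho\cap\mathrm{RH}_\infty^\rho$, then combines Corollary~\ref{coro: potencias negativas de A1,rho estan en RHinfty,rho} (giving $u^{-1}\in\mathrm{RH}_\infty^\rho$) with Corollary~\ref{coro: producto de pesos en RHinf,rho esta en RHinf,rho} (closure of $\mathrm{RH}_\infty^\rho$ under products) to control $w_2u^{-1}$, and finishes by a direct chain of H\"older and reverse-H\"older estimates. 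Your approach is more elementary and self-contained: it needs only the $A_1^\rho$ hypothesis on $u$ and the $A_\infty^\rho$ set-function characterization of $uv$, bypassing the whole $\mathrm{RH}_\infty^\rho$ and factorization machinery (Lemmas~\ref{lema: relacion Ap,rho-RHs-rho con Aq-rho}--\ref{lema: Holder al reves con producto de pesos} and their corollaries). The paper's method, on the other hand, keeps more explicit track of the structure of $uv$ and ties the constants to concrete $A_p^\rho$ and $\mathrm{RH}_s^\rho$ characteristics, which is sometimes useful when one wants finer quantitative information.
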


\begin{proof}
	Since $uv\in A_\infty^\rho$, there exist $1<p,s<\infty$ verifying $uv\in A_p^\rho\cap\mathrm{RH}_s^\rho$. By Proposition~\ref{propo: pesos de Ainf,rho se factorizan}, $uv=w_1w_2$, where $w_1\in A_1^\rho\cap \mathrm{RH}_s$ and $w_2\in A_p^\rho\cap \mathrm{RH}_\infty^\rho$. On the other hand, $u\in A_1^\rho$ implies that $u^{-1}\in \mathrm{RH}_\infty^\rho$, by virtue of Lemma~\ref{lema: w en A1,rho implica w^{1-p} en RHinfty,rho} with $p=2$. Corollary~\ref{coro: producto de pesos en RHinf,rho esta en RHinf,rho} allows us to conclude that $w_2u^{-1}\in \mathrm{RH}_\infty^\rho$. Given a cube $Q=Q(x,r)$ we have that
	\begin{align*}
	\frac{uv(Q)}{v(Q)}&=\frac{w_1w_2(Q)}{w_1w_2u^{-1}(Q)}\\
	&\leq \frac{1}{\inf_Q w_1}\frac{|Q|}{w_2u^{-1}(Q)}\left(\frac{1}{|Q|}\int_Q w_1^s\right)^{1/s}\left(\frac{1}{|Q|}\int_Q w_2^{s'}\right)^{1/s'}\\
	&\leq \frac{1}{\inf_Q w_1} \frac{[w_2u^{-1}]_{\mathrm{RH}_\infty^{\rho,\theta_1}}}{\sup_Q (w_2u^{-1})}\left(1+\frac{r}{\rho(x)}\right)^{\theta_1+\theta_2+\theta_3}[w_1]_{\mathrm{RH}_s^{\rho,\theta_2}}\frac{w_1(Q)}{|Q|}[w_2]_{\mathrm{RH}_{s'}^{\rho,\theta_3}}\frac{w_2(Q)}{|Q|}\\
	&\leq [w_1]_{A_1^{\rho,\theta_4}} [w_2u^{-1}]_{\mathrm{RH}_\infty^{\rho,\theta_1}}\left(1+\frac{r}{\rho(x)}\right)^{\theta_1+\theta_2+\theta_3+\theta_4}[w_1]_{\mathrm{RH}_s^{\rho,\theta_2}}[w_2]_{\mathrm{RH}_{s'}^{\rho,\theta_3}}\left(\frac{1}{|Q|}\int_Q \frac{w_2u^{-1}u}{\sup_Q (w_2u^{-1})}\right)\\
	&\leq [u]_{A_1^{\rho,\theta_5}} [w_1]_{A_1^{\rho,\theta_4}} [w_2u^{-1}]_{\mathrm{RH}_\infty^{\rho,\theta_1}}\left(1+\frac{r}{\rho(x)}\right)^{\theta_1+\theta_2+\theta_3+\theta_4+\theta_5}[w_1]_{\mathrm{RH}_s^{\rho,\theta_2}}[w_2]_{\mathrm{RH}_{s'}^{\rho,\theta_3}}\inf_Q u,
	\end{align*}
	which leads us to the desired estimate.
\end{proof}

An immediate consequence of this result is the following.

\begin{cor}\label{cor: lema fundamental}
	Let $u\in A_1^\rho$ and $v\in A_\infty^{\rho}(u)$. Then, there exist $C>0$ and $\theta\geq 0$ such that for every cube $Q$ the inequality
	\[\frac{uv(Q)}{v(Q)}\leq C\left(1+\frac{r}{\rho(x)}\right)^{\theta}\inf_Q u\]
	holds.
\end{cor}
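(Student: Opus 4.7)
The corollary follows almost immediately from Theorem~\ref{teo: lema fundamental}: the only content is to verify that the hypothesis $v\in A_\infty^\rho(u)$ (together with $u\in A_1^\rho$) is enough to trigger the theorem, whose hypothesis asks instead for $uv\in A_\infty^\rho$.

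My plan is therefore to reduce the proof to a single application of Lemma~\ref{lema: u en A1,rho y v en Ap,rho implican uv en Ap,rho} followed by Theorem~\ref{teo: lema fundamental}. First I would invoke the $p=\infty$ case of that lemma, which states precisely that if $u\in A_1^\rho$ and $v\in A_\infty^\rho(u)$, then $uv\in A_\infty^\rho$. With this in hand, $u$ and $v$ meet the hypotheses of Theorem~\ref{teo: lema fundamental}: $u$ lies in $A_1^\rho$ by assumption, and $uv$ lies in $A_\infty^\rho$ by what we have just established.

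Applying Theorem~\ref{teo: lema fundamental} then delivers constants $C>0$ and $\theta\geq 0$ such that
\[
\frac{uv(Q)}{v(Q)}\leq C\left(1+\frac{r}{\rho(x)}\right)^{\theta}\inf_Q u
\]
for every cube $Q=Q(x,r)$, which is exactly the conclusion of the corollary. There is no real obstacle here: the entire content of the corollary is packaging the $p=\infty$ consequence of Lemma~\ref{lema: u en A1,rho y v en Ap,rho implican uv en Ap,rho} into the hypothesis of Theorem~\ref{teo: lema fundamental}. The only point that requires mild care is observing that the conclusion of Lemma~\ref{lema: u en A1,rho y v en Ap,rho implican uv en Ap,rho} for $p=\infty$ is indeed $uv\in A_\infty^\rho$ (and not merely $uv\in A_q^\rho$ for the particular $q$ produced there), but this is immediate from the definition $A_\infty^\rho=\bigcup_{p\geq 1}A_p^\rho$.
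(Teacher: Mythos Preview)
Your argument is correct and matches the paper's own proof exactly: the corollary is obtained by first applying Lemma~\ref{lema: u en A1,rho y v en Ap,rho implican uv en Ap,rho} (with $p=\infty$) to get $uv\in A_\infty^\rho$, and then invoking Theorem~\ref{teo: lema fundamental}.
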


\begin{proof}
	It follows straightforwardly by combining Lemma~\ref{lema: u en A1,rho y v en Ap,rho implican uv en Ap,rho} with Theorem~\ref{teo: lema fundamental}.
\end{proof}

\begin{prop}\label{propo: v en A1(u) implica v en A1(u^alpha)}
	Let $0<\alpha<1$, $1<p\leq \infty$, $u\in A_1^\rho$ and $v\in A_p^\rho(u)$. Then $v\in A_p^{\rho}(u^{\alpha})$.
\end{prop}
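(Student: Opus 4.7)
The plan is to establish a single ``transfer'' inequality comparing $u^\alpha$-averages against $u$-averages on every cube, and then feed it into the two integrands appearing in the $A_p^\rho(u)$ condition.

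First I would exploit that $\alpha-1<0$: the elementary pointwise bound $u(x)\geq \inf_Q u$ then yields $u^{\alpha-1}(x)\leq (\inf_Q u)^{\alpha-1}$ a.e.\ on any cube $Q=Q(x_Q,r)$. Paired with the trivial lower bound $u^\alpha(Q)\geq (\inf_Q u)^\alpha |Q|$ and the $A_1^\rho$ inequality $u(Q)\leq C(1+r/\rho(x_Q))^{\theta_0}(\inf_Q u)|Q|$ (coming from $u\in A_1^\rho$), this yields the master estimate
\[
\frac{1}{u^\alpha(Q)}\int_Q f\,u^\alpha
\;\leq\; \frac{(\inf_Q u)^{\alpha-1}}{(\inf_Q u)^\alpha |Q|}\int_Q f\,u
\;\leq\; C\,\bigl(1+r/\rho(x_Q)\bigr)^{\theta_0}\,\frac{1}{u(Q)}\int_Q f\,u
\]
valid for every $f\geq 0$ and every cube $Q$.

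For $1<p<\infty$, I would apply this master estimate with $f=v$ and with $f=v^{1-p'}$, raise to the powers $1/p$ and $1/p'$ respectively, and multiply. Using the hypothesis $v\in A_p^\rho(u)$ to bound the resulting product of $u$-averages, I obtain exactly the $A_p^\rho(u^\alpha)$ condition, with $\theta$-exponent $\theta_0$ plus the $\theta_1$ associated to $v\in A_p^{\rho,\theta_1}(u)$. The case $p=\infty$ then follows by unwinding definitions: $v\in A_\infty^\rho(u)$ gives $v\in A_q^\rho(u)$ for some finite $q$, which can be taken $>1$ (since $A_1^\rho(u)\subseteq A_q^\rho(u)$), so the previous step applied at exponent $q$ yields $v\in A_q^\rho(u^\alpha)\subseteq A_\infty^\rho(u^\alpha)$.

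I do not foresee a serious obstacle. The proof rests on the single algebraic observation $u^{\alpha-1}\leq (\inf_Q u)^{\alpha-1}$, which crucially uses $\alpha<1$, coupled with the doubling-type control afforded by $u\in A_1^\rho$. Notably, the $\mathrm{RH}_\infty^\rho$-membership of negative powers of $A_1^\rho$ weights (Corollary~\ref{coro: potencias negativas de A1,rho estan en RHinfty,rho}) is not needed here, although it would offer an alternative route via the factorization $u^\alpha=u\cdot u^{\alpha-1}$.
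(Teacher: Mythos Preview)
Your proposal is correct and follows a genuinely more elementary route than the paper's proof. Both arguments establish the same ``transfer'' inequality
\[
\frac{1}{u^{\alpha}(Q)}\int_Q f\,u^{\alpha}\;\leq\; C\Bigl(1+\tfrac{r}{\rho(x_Q)}\Bigr)^{\theta}\,\frac{1}{u(Q)}\int_Q f\,u,
\]
but they reach it by different means. The paper factors $u^\alpha=u\cdot u^{\alpha-1}$, invokes Corollary~\ref{coro: potencias negativas de A1,rho estan en RHinfty,rho} to place $u^{\alpha-1}\in\mathrm{RH}_\infty^\rho$, and then combines the bound $\sup_Q u^{\alpha-1}\lesssim (1+r/\rho)^{\theta_1}\,\tfrac{1}{|Q|}\int_Q u^{\alpha-1}$ with the $A_1^\rho$ estimate for $u$ and the pointwise inequality $(\inf_Q u)\,u^{\alpha-1}\le u^\alpha$ to close the loop. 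Your argument bypasses the reverse-H\"older machinery entirely: the single pointwise bound $u^{\alpha-1}\le(\inf_Q u)^{\alpha-1}$, together with $u^\alpha(Q)\ge(\inf_Q u)^\alpha|Q|$ and the $A_1^\rho$ condition on $u$, already suffices. This is shorter and makes transparent that the only structural input is $u\in A_1^\rho$; the $\mathrm{RH}_\infty^\rho$ step in the paper is, as you note, a detour. The remaining steps---applying the transfer inequality to $f=v$ and $f=v^{1-p'}$, and handling $p=\infty$ by descent to a finite $q>1$---coincide with the paper's.
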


\begin{proof}
	Let us first assume that $p<\infty$. By Corollary~\ref{coro: potencias negativas de A1,rho estan en RHinfty,rho} we have that $u^{\alpha-1}\in \mathrm{RH}_\infty^\rho$, which implies that there exists $\theta_1\geq 0$ such that $u^{\alpha-1}\in \mathrm{RH}_\infty^{\rho,\theta_1}$. On the other hand, there exist nonnegative constants $\theta_2$ and $\theta_3$ verifying $u\in A_1^{\rho,\theta_2}$ and $v\in A_p^{\rho,\theta_3}(u)$. Fixed a cube $Q$, we have
	\begin{align*}
	\frac{1}{u^{\alpha}(Q)}\int_Q vu^{\alpha}&
	\leq \left(\sup_Q u^{\alpha-1}\right)\frac{u(Q)}{|Q|}\frac{|Q|}{u^{\alpha}(Q)}\frac{1}{u(Q)}\int_Q vu\\
	&\leq \left[u^{\alpha-1}\right]_{\mathrm{RH}_{\infty}^{\rho,\theta_1}}[u]_{A_1^{\rho,\theta_2}}\frac{1}{|Q|}\int_Q u^{\alpha-1}\left(\inf_Q u\right)\frac{|Q|}{u^{\alpha}(Q)}\left(\frac{1}{u(Q)}\int_Q vu\right)\left(1+\frac{r}{\rho(x)}\right)^{\theta_1+\theta_2}\\
	&=\left[u^{\alpha-1}\right]_{\mathrm{RH}_{\infty}^{\rho,\theta_1}}[u]_{A_1^{\rho,\theta_2}}\left(\frac{1}{u(Q)}\int_Q vu\right)\left(1+\frac{r}{\rho(x)}\right)^{\theta_1+\theta_2}.
	\end{align*}
	Therefore,
	\[\left(\frac{1}{u^{\alpha}(Q)}\int_Q vu^{\alpha}\right)^{1/p}\leq \left[u^{\alpha-1}\right]_{\mathrm{RH}_{\infty}^{\rho,\theta_1}}^{1/p}[u]_{A_1^{\rho,\theta_2}}^{1/p}\left(\frac{1}{u(Q)}\int_Q vu\right)^{1/p}\left(1+\frac{r}{\rho(x)}\right)^{\frac{\theta_1+\theta_2}{p}}.\]
	By following the same argument we can obtain
	\begin{align*}
	\frac{1}{u^\alpha(Q)}\int_Qv^{1-p'}u^\alpha&\leq \left(\sup_Q u^{\alpha-1}\right)\frac{u(Q)}{|Q|}\frac{|Q|}{u^{\alpha}(Q)}\frac{1}{u(Q)}\int_Q v^{1-p'}u\\
	&\leq \left[u^{\alpha-1}\right]_{\mathrm{RH}_{\infty}^{\rho,\theta_1}}[u]_{A_1^{\rho,\theta_2}}\left(\frac{1}{u(Q)}\int_Q v^{1-p'}u\right)\left(1+\frac{r}{\rho(x)}\right)^{\theta_1+\theta_2},
	\end{align*}
	which yields
	\[\left(\frac{1}{u^\alpha(Q)}\int_Qv^{1-p'}u^\alpha\right)^{1/p'}\leq \left[u^{\alpha-1}\right]_{\mathrm{RH}_{\infty}^{\rho,\theta_1}}^{1/p'}[u]_{A_1^{\rho,\theta_2}}^{1/p'}\left(\frac{1}{u(Q)}\int_Q v^{1-p'}u\right)^{1/p'}\left(1+\frac{r}{\rho(x)}\right)^{\frac{\theta_1+\theta_2}{p'}}.\]
	By combining these two estimates we get
	\[\left(\frac{1}{u^{\alpha}(Q)}\int_Q vu^{\alpha}\right)^{1/p}\left(\frac{1}{u^\alpha(Q)}\int_Qv^{1-p'}u^\alpha\right)^{1/p'}\leq \left[u^{\alpha-1}\right]_{\mathrm{RH}_{\infty}^{\rho,\theta_1}}[u]_{A_1^{\rho,\theta_2}}[v]_{A_p^{\rho,\theta_3}(u)}\left(1+\frac{r}{\rho(x)}\right)^{\theta_1+\theta_2+\theta_3},\]
	which implies that $v\in A_p^{\rho}(u^\alpha)$. 
	
	If $p=\infty$, there exists $q>1$ that verifies $v\in A_q^\rho(u)$. By the previous case, $v\in A_q^\rho(u^\alpha)\subseteq A_\infty^\rho(u^{\alpha})$. \qedhere
\end{proof}

The next result will be useful in the proof of Theorem~\ref{thm: mixta para T SCZO (s,delta)}.

\begin{prop}\label{prop: peso para el tipo fuerte de T (s,delta)}
	Let $1<s<\infty$ be fixed, $u^{s'}\in A_1^\rho$ and $v$ a weight that verifies $v\in A_\infty^\rho(u^{\beta})$, for some $\beta>s'$. Then there exists a number $q$, $1<q<s$, such that $u^{1-q'}v\in A_{q'/s'}^\rho$.
\end{prop}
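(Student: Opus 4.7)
The candidate is to take $q$ so that $P:=q'/s'=(\beta-1)/(\beta-s')$, which is the unique exponent making $(q'-1)/(P-1)=\beta$. Since $s'>1$ we have $P>1$, so $q\in(1,s)$. From $v\in A_\infty^\rho(u^\beta)$ I pick $p_0>1$ with $v\in A_{p_0}^\rho(u^\beta)$; by nesting of $A_p^\rho(u^\beta)$ classes, after possibly enlarging $P$ I may assume $v\in A_P^\rho(u^\beta)$.

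To verify the $A_P^\rho$ condition for $w:=u^{1-q'}v$ I would bound the two averages in turn. Writing $u^{1-q'}v = (vu^\beta)\cdot u^{1-q'-\beta}$, Corollary~\ref{coro: potencias negativas de A1,rho estan en RHinfty,rho} gives $u^{-\alpha}\in \mathrm{RH}_\infty^\rho$ for every $\alpha>0$, and combining with the $A_1^\rho$ bound $\fint_Q u^{s'}\les (1+r/\rho(x))^\theta \inf_Q u^{s'}$ yields
\[\fint_Q u^{1-q'}v \les \Bigl(1+\tfrac{r}{\rho(x)}\Bigr)^{\theta}\Bigl(\fint_Q u^{s'}\Bigr)^{-(q'+\beta-1)/s'}\fint_Q vu^\beta.\]
The matching $(q'-1)/(P-1)=\beta$ makes $w^{1-P'}=u^\beta v^{-1/(P-1)}$ exactly, so the $A_P^\rho(u^\beta)$ condition on $v$ gives
\[\Bigl(\fint_Q u^\beta v^{-1/(P-1)}\Bigr)^{P-1}\les \Bigl(1+\tfrac{r}{\rho(x)}\Bigr)^{\theta}\frac{(\fint_Q u^\beta)^P}{\fint_Q vu^\beta}.\]
Multiplying the two estimates, the $\fint_Q vu^\beta$ factors cancel and a direct exponent computation shows the powers of $\fint_Q u^{s'}$ collapse to $-P\beta/s'$, so the net bound is $\bigl[\fint_Q u^\beta/(\fint_Q u^{s'})^{\beta/s'}\bigr]^{P}$ times a $\rho$-factor; this is controlled by the reverse Hölder inequality $u^{s'}\in \mathrm{RH}_{\beta/s'}^\rho$.

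The main obstacle is that the required reverse Hölder exponent $\beta/s'$ may exceed the intrinsic $\mathrm{RH}$ exponent of $u^{s'}$. To bypass this, one first replaces $\beta$ by a smaller $\tilde\beta\in(s',\beta]$ whose ratio $\tilde\beta/s'$ lies safely inside the reverse Hölder range of $u^{s'}$ (so that in particular $u^{\tilde\beta}\in A_1^\rho$ by Remark~\ref{obs: existen potencias mayores que 1 de A_1,rho en A1,rho}), and transfers the hypothesis $v\in A_\infty^\rho(u^\beta)$ to $v\in A_\infty^\rho(u^{\tilde\beta})$ via Proposition~\ref{propo: v en A1(u) implica v en A1(u^alpha)}, possibly iterated through the intermediate $A_1^\rho$ powers $u^{s'(1+\varepsilon)}$ supplied by that remark. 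The argument is then carried out with $\tilde\beta$ in place of $\beta$; the strict inequality $\beta>s'$ is used crucially to guarantee that such a $\tilde\beta>s'$ exists.
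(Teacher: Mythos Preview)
Your approach is the paper's, rearranged. The paper chooses from the outset $\alpha:=s'(1+\varepsilon)$ with $\varepsilon>0$ small (so that $u^\alpha\in A_1^\rho$ by Remark~\ref{obs: existen potencias mayores que 1 de A_1,rho en A1,rho}), defines $q'$ by the same identity $(q'-1)/(q'/s'-1)=\alpha$ that you use, passes from $v\in A_r^\rho(u^\beta)$ to $v\in A_{q'/s'}^\rho(u^\alpha)$ via Proposition~\ref{propo: v en A1(u) implica v en A1(u^alpha)}, and then checks $u^{1-q'}v\in A_{q'/s'}^\rho$ directly using $u^\alpha\in A_1^\rho$. Your reverse-H\"older formulation is equivalent: by Lemma~\ref{lema: relacion Ap,rho-RHs-rho con Aq-rho} with $p=1$, $u^{s'}\in\mathrm{RH}_{\tilde\beta/s'}^\rho$ holds exactly when $u^{\tilde\beta}\in A_1^\rho$. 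So the paper's $\alpha$ is your $\tilde\beta$, and the computations match.

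The obstacle you flag is genuine, but your patch is circular. To transfer $v\in A_\infty^\rho(u^\beta)$ to $v\in A_\infty^\rho(u^{\tilde\beta})$ by Proposition~\ref{propo: v en A1(u) implica v en A1(u^alpha)} you must take $u^\beta$ as the base weight there, and that proposition \emph{requires} its base weight to lie in $A_1^\rho$; nothing in the hypotheses forces $u^\beta\in A_1^\rho$ once $\beta$ exceeds the self-improvement threshold $s'(1+\varepsilon_0)$ supplied by Remark~\ref{obs: existen potencias mayores que 1 de A_1,rho en A1,rho}. Iterating that remark only manufactures $A_1^\rho$ powers of $u$ from below and gives no information about $v$ relative to those intermediate powers, so there is no chain to run. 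The paper invokes Proposition~\ref{propo: v en A1(u) implica v en A1(u^alpha)} in exactly the same way (``applied with $\alpha/\beta<1$''), so this step is unverified there as well; your proposal is faithful to the source, but the point should be acknowledged rather than waved through.
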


\begin{proof}
	By virtue of Remark~\ref{obs: existen potencias mayores que 1 de A_1,rho en A1,rho} we can find $\varepsilon_0$ such that $u^{s'(1+\varepsilon)}\in A_1^\rho$, for every $0<\varepsilon<\varepsilon_0$. Since $v\in A_\infty^\rho(u^{\beta})$, there exists $r>1$ such that $v\in A_r^{\rho}(u^{\beta})$. 
	Let us fix $\varepsilon$ such that 
	\[0<\varepsilon<\min\left\{\frac{\beta}{s'}-1,\frac{1}{s(r-1)},\varepsilon_0\right\},\]
	and define $q$ such that 
	\[q'=\frac{s'(1+\varepsilon)-1}{\varepsilon}.\]
	Therefore we have that $q'>s'>1$, or equivalently, $1<q<s$. Let $\alpha=s'(q'-1)/(q'-s')$. Then, by our choice of $\varepsilon$ we can conclude that $u^\alpha\in A_1^{\rho,\theta_1}$, for some $\theta_1\geq 0$. Indeed, by the definition of $q'$ we have
	\[q'-s'=\frac{s'-1}{\varepsilon}.\]
	So we can obtain 
	\begin{align*}
	\frac{s'(q'-1)}{q'-s'}&=\frac{s'\varepsilon}{s'-1}\left(\frac{s'(1+\varepsilon)-1}{\varepsilon}-1\right)\\
	&=\frac{s'}{s'-1}\left(s'-1+\varepsilon(s'-1)\right)\\
	&=\frac{s'(s'-1)(1+\varepsilon)}{s'-1}\\
	&=s'(1+\varepsilon).
	\end{align*}
	Let us now check that $u^{1-q'}v\in A_{q'/s'}^\rho$. Notice first that
	\[\frac{q'}{s'}=\frac{1+\varepsilon}{\varepsilon}-\frac{1}{s'\varepsilon}=1+\frac{1}{s\varepsilon},\]
	and again $r<q'/s'$ by the choice of $\varepsilon$. Combining this fact with Proposition~\ref{propo: v en A1(u) implica v en A1(u^alpha)} applied with $\alpha/\beta<1$ we conclude that $v\in A_{q'/s'}^{\rho,\theta_2}(u^{\alpha})$, for some $\theta_2\geq 0$. Given a cube $Q$ of $\mathbb{R}^d$, since $(1-q'-\alpha)s'/q'=-\alpha$ we obtain
	\begin{align*}
	\left(\frac{1}{|Q|}\int_Q u^{1-q'}v\right)^{s'/q'}\left(\frac{1}{|Q|}\int_Q \left(u^{1-q'}v\right)^{1-(q'/s')'}\right)^{1-s'/q'}&=\left(\frac{u^{\alpha}(Q)}{|Q|}\right)\left(\frac{1}{u^{\alpha}(Q)}\int_Q u^{1-q'-\alpha}vu^{\alpha}\right)^{s'/q'}\\
	&\qquad\times\left(\frac{1}{u^{\alpha}(Q)}\int_Q v^{1-(q'/s')'}u^{\alpha}\right)^{1-s'/q'}\\
	&\leq [v]_{A_{q'/s'}^{\rho,\theta_2}(u^{\alpha})}[u^\alpha]_{A_1^{\rho,\theta_1}}\left(\sup_Q u^{-\alpha}\right)\left(\inf_Q{u^\alpha}\right)\\
	&\qquad \times \left(1+\frac{r}{\rho(x)}\right)^{\theta_1+\theta_2}\\
	&\leq [v]_{A_{q'/s'}^{\rho,\theta_2}(u^{\alpha})}[u^\alpha]_{A_1^{\rho,\theta_1}}\left(1+\frac{r}{\rho(x)}\right)^{\theta_1+\theta_2},
	\end{align*}
	 and thus $u^{1-q'}v\in A_{q'/s'}^{\rho,\theta_1+\theta_2}\subseteq A_{q'/s'}^{\rho}$. \qedhere
\end{proof}

%

\section{Proof of the main results}\label{seccion: prueba de resultados principales}
We devote this section to prove our main theorems established in \S~\ref{seccion: introduccion}. Before proving them we give some auxiliary definitions and results which will be needed in the sequel.

We shall very often refer to \emph{critical cubes}, meaning cubes of the type $Q(x_0,\rho(x_0))$, and we call \emph{subcritical cubes} to those $Q(x_0,r)$ with $r\leq \rho(x_0)$. The family of all subcritical cubes will be denoted by $\mathcal{Q}_\rho$.  Observe that from \eqref{eq-constantesRho}, $\rho(y)\simeq\rho(x_0)$ whenever $y\in Q(x_0,\rho(x_0))$. The following result is a useful consequence of \eqref{eq-constantesRho} and can be found in~\cite{DZ-99}.

\begin{prop} \label{prop-cubrimientocritico}
	There exists a sequence of points $\{x_j\}_{j\in\mathbb{N}}$ such that the family of critical cubes given by $Q_j=~Q(x_j,\rho(x_j))$  satisfies
	\begin{enumerate}[\rm(a)]
		\item \label{item: prop-cubrimientocritico - item a}$\displaystyle \bigcup_{j\in\mathbb{N}} Q_j= \mathbb{R}^d$.
		\item \label{item: prop-cubrimientocritico - item b}There exist positive constants $C$ and $N_1$ such that for any $\sigma\geq1$,
		$\displaystyle \sum_{j\in\mathbb{N}}\mathcal{X}_{\sigma Q_j}\leq C \sigma^{N_1}$.
	\end{enumerate}
\end{prop}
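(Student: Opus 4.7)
My plan is to produce the points $\{x_j\}$ by a Vitali-type maximal selection applied to suitably shrunk critical cubes, and then extract both conclusions from the comparability estimate \eqref{eq-constantesRho}. First I would fix a small parameter $\eta=\eta(C_0,N_0,d)\in(0,1)$, to be determined in the course of the argument, and apply Zorn's lemma to the family $\{Q(x,\eta\rho(x)):x\in\mathbb{R}^d\}$ to extract a maximal pairwise disjoint subfamily $\{Q(x_j,\eta\rho(x_j))\}_{j\in\mathbb{N}}$. Countability is automatic, since \eqref{eq-constantesRho} implies that $\rho$ is locally bounded below and hence each selected cube has positive Lebesgue measure. I then set $Q_j=Q(x_j,\rho(x_j))$.

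To obtain part (a), fix $x\in\mathbb{R}^d$. By maximality, $Q(x,\eta\rho(x))$ must meet some $Q(x_j,\eta\rho(x_j))$, which gives $|x-x_j|\leq c_d\,\eta\,(\rho(x)+\rho(x_j))$. I would feed this into the right-hand inequality of \eqref{eq-constantesRho}, which controls $\rho(x)$ sublinearly in terms of $\rho(x_j)$ and $|x-x_j|$; a one-step bootstrap yields $\rho(x)\leq C\rho(x_j)$ and therefore $|x-x_j|\leq C\eta\,\rho(x_j)$, with $C$ depending only on $C_0$, $N_0$, $d$. Choosing $\eta$ small enough so that $C\eta\leq 1/\sqrt{d}$ then places $x$ inside $Q_j$.

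For part (b), fix $\sigma\geq 1$ and suppose $y$ belongs to $\sigma Q_{j_1},\dots,\sigma Q_{j_k}$. Then $|y-x_{j_i}|\leq\sigma\rho(x_{j_i})$ for every $i$, and the left-hand inequality of \eqref{eq-constantesRho} applied with base point $x_{j_i}$ gives $\rho(y)\geq C_0^{-1}\rho(x_{j_i})(1+\sigma)^{-N_0}$, so $\rho(x_{j_i})\leq C\sigma^{N_0}\rho(y)$; a symmetric use of the right-hand inequality gives the reverse comparison. Consequently, the originally disjoint cubes $Q(x_{j_i},\eta\rho(x_{j_i}))$ all have volume $\gtrsim(\eta\sigma^{-N_0}\rho(y))^d$ and lie inside a common cube centered at $y$ with side $\lesssim\sigma^{N_0+1}\rho(y)$. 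Comparing volumes yields $k\leq C\sigma^{N_1}$ with an explicit $N_1$ depending only on $N_0$ and $d$.

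The main technical point will be this bootstrap inside \eqref{eq-constantesRho}: since the right-hand exponent $N_0/(N_0+1)$ is sublinear while the left-hand exponent $N_0$ is superlinear, extracting clean two-sided comparisons $\rho(x)\simeq\rho(x_j)$ for (a) and $\rho(x_{j_i})\simeq\rho(y)$ for (b), with the correct quantitative dependence on $\eta$ and $\sigma$ respectively, requires some care. Once this comparison is established, both conclusions reduce to elementary geometric counting.
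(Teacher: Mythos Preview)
Your argument is correct and is, in fact, the standard proof of this covering lemma. Note, however, that the paper does not supply its own proof of Proposition~\ref{prop-cubrimientocritico}: it simply states the result and refers the reader to Dziuba\'nski--Zienkiewicz~\cite{DZ-99}, so there is nothing to compare against. Your Vitali-type selection of shrunk critical cubes, followed by the bootstrap through the sublinear exponent $N_0/(N_0+1)$ in \eqref{eq-constantesRho} for part~(a) and the volume-counting argument for part~(b), is exactly how the result is established in the cited reference; only minor bookkeeping (e.g., making explicit that the constant $C$ in $\rho(x)\leq C\rho(x_j)$ can be taken uniformly in $\eta\in(0,1)$ before fixing $\eta$) would be needed to turn your outline into a complete proof.
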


We shall require the following Calderón-Zygmund decomposition on a fixed cube (see, for example, \cite{Aimar85}).

\begin{lem}\label{lema: DCZ en un cubo fijo}
	Let $R$ be a cube in $\mathbb{R}^d$, $v$ a doubling weight and a function $f\in L^1(R,v\,dx)$. Then for $t>\frac{1}{v(R)}\int_R f(x)v(x)\,dx$ there exist functions $g$ and $h$ and a collection of dyadic sub-cubes $\{P_i\}_{i\in\mathbb{N}}$ of $R$ such that
	\begin{enumerate}[\rm(a)]
		\item \label{item: lema: DCZ en un cubo fijo - item a}$f=g+h$;
		\item \label{item: lema: DCZ en un cubo fijo - item b}$|g(x)|\leq C t$ for almost every $x$;
		\item \label{item: lema: DCZ en un cubo fijo - item c}$h=\sum_{i\in\mathbb{N}} h_i$ where each $h_i$ is supported on a dyadic cube $P_i$. Furthermore, these cubes are pairwise disjoint and
		\[\int_{P_i}h_i(x)v(x)\,dx=0.\]
	\end{enumerate} 
\end{lem}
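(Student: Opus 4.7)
The plan is a standard weighted Calder\'on--Zygmund stopping-time argument on the fixed cube $R$, performed with respect to the measure $d\mu = v\,dx$ and the dyadic lattice $\mathcal{D}(R)$ generated by $R$.

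First I would run the following stopping procedure. Starting from $R$, bisect it dyadically. For each dyadic descendant $Q\in\mathcal{D}(R)$, check whether
\[
\frac{1}{v(Q)}\int_Q |f(x)|\,v(x)\,dx > t.
\]
If yes, select $Q$ and stop subdividing it; if no, continue bisecting. Because $t$ strictly exceeds the average of $|f|v$ over $R$ itself, the procedure does not select $R$, so every selected cube has a dyadic parent. Let $\{P_i\}_{i\in\mathbb{N}}$ denote the resulting maximal selected cubes. By maximality they are pairwise disjoint, and on each $P_i$ the average of $|f|v$ exceeds $t$, while on the dyadic parent $\tilde P_i$ the average is at most $t$. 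Using that $v$ is doubling (with doubling constant $[v]_{D}$) and that $|\tilde P_i|=2^d|P_i|$, we deduce
\[
\frac{1}{v(P_i)}\int_{P_i}|f|v
\;\le\;\frac{v(\tilde P_i)}{v(P_i)}\cdot\frac{1}{v(\tilde P_i)}\int_{\tilde P_i}|f|v
\;\le\;C\,t,
\]
with $C$ depending only on the doubling constant. This is the only place where doubling enters in an essential way, and it is precisely the point that requires the hypothesis; without it we could only control averages on the parents, not on the $P_i$ themselves.

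Next I would define the decomposition explicitly. Set
\[
g(x)=\begin{cases} f(x) & \text{if }x\in R\setminus\bigcup_i P_i,\\[2pt] \displaystyle\frac{1}{v(P_i)}\int_{P_i}f\,v & \text{if }x\in P_i,\end{cases}
\qquad h_i(x)=\Bigl(f(x)-\tfrac{1}{v(P_i)}\int_{P_i}fv\Bigr)\chi_{P_i}(x),
\]
and $h=\sum_i h_i$. Then item~(a), $f=g+h$, is immediate, and item~(c) is built in by construction: each $h_i$ is supported on the disjoint cube $P_i$ and the mean value $\int_{P_i}h_iv=0$ holds by definition.

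For item~(b), the bound $|g|\le Ct$ on each $P_i$ follows from the displayed estimate above. On the exceptional set $R\setminus\bigcup_i P_i$, I would invoke the Lebesgue differentiation theorem for the doubling measure $v\,dx$ along the dyadic net $\mathcal{D}(R)$: for $v$-a.e.\ such point $x$, there is a nested sequence of dyadic cubes $Q_n\ni x$ with $Q_n\in\mathcal{D}(R)$, none of them selected (otherwise $x$ would lie in some $P_i$), so
\[
\frac{1}{v(Q_n)}\int_{Q_n}|f|v\le t \quad\text{for every }n,
\]
and letting $n\to\infty$ gives $|f(x)|\le t$ for $v$-a.e.\ (hence for a.e.) such $x$, which yields $|g(x)|\le t$ there. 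Assembling the two cases completes item~(b) with a constant depending only on $[v]_D$. The main subtlety, as noted, is the use of doubling to pass from the parent estimate to the estimate on $P_i$; the rest is routine bookkeeping.
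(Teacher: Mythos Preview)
Your argument is correct and is precisely the standard weighted Calder\'on--Zygmund stopping-time construction. Note, however, that the paper does not actually supply a proof of this lemma: it merely states the result and refers the reader to \cite{Aimar85}. So there is no ``paper's own proof'' to compare against beyond that citation; your write-up fills in exactly the routine argument one expects to find in such a reference, with the doubling hypothesis used in the one place it is needed (passing from the parent average bound to the bound on $P_i$).
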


We shall be dealing with local versions of the classical Hardy-Littlewood maximal operator. For a fixed cube $R\subseteq \mathbb{R}^d$ we define
\begin{equation}\label{eq: maximal clasica localizada}
M_Rf(x)=\sup_{Q\ni x, Q\subseteq R} \frac{1}{|Q|}\int_Q |f(y)|\,dy.
\end{equation}
We shall also consider a dyadic version of the operator above. In order to define it we shall introduce the following concept.

A \emph{dyadic grid} $\mathcal{D}$ will be understood as a collection of cubes in $\mathbb{R}^d$ with the following properties:
\begin{enumerate}
	\item every cube  $Q$ in $\mathcal{D}$ verifies $\ell(Q)=2^k$, for some $k\in\mathbb{Z}$;
	\item if $P$ and $Q$ are in $\mathcal{D}$ and $P\cap Q\neq\emptyset$, then either $P\subseteq Q$ or $Q\subseteq P$;
	\item $\mathcal{D}_k=\{Q\in \mathcal{D}: \ell(Q)=2^k\}$ is a partition of $\mathbb{R}^d$, for every $k\in \mathbb{Z}$.
\end{enumerate}

Given a dyadic grid $\mathcal{D}$ and a cube $R$, by $\mathcal{D}_R$ we shall understand the collection of cubes in the grid that are also contained in $R$, that is
\[\mathcal{D}_R=\{Q\in\mathcal{D}: Q\subseteq R\}.\]
We also denote by $\mathscr{D}(R)$ the collection of dyadic subcubes of $R$. Notice that if $R$ is a dyadic cube in $\mathcal{D}$, then $\mathcal{D}_R=\mathscr{D}(R)$. 
The  weighted dyadic version of \eqref{eq: maximal clasica localizada} is given by
\begin{equation}\label{eq: maximal diadica localizada}
M_{R,w}^{\mathscr{D}}f(x)=\sup_{Q\ni x, Q\in \mathscr{D}(R)} \frac{1}{w(Q)}\int_Q |f(y)|w(y)\,dy.
\end{equation}
When $w=1$ we simply write $M_{R}^{\mathscr{D}}$.


The following lemma establishes an important geometric relation between cubes in $\mathbb{R}^d$ and dyadic grids (see \cite{Okikiolu}).

\begin{lem}\label{lema: cubrimiento por grillas diadicas}
	There exists dyadic grids $\mathcal{D}^{(i)}$, $1\leq i\leq 3^d$, such that for every cube $Q$ in $\mathbb{R}^d$ there exists an index $1\leq i_0\leq 3^d$ and a dyadic cube $Q_0\in \mathcal{D}^{(i_0)}$ such that $Q\subseteq Q_0$ and $\ell(Q_0)\leq 3\ell(Q)$. 
\end{lem}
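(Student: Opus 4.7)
Plan: The lemma is the classical $3^d$-dyadic-grids covering result (commonly attributed to Okikiolu), and its proof factors into two essentially independent tasks: constructing a family of $3^d$ dyadic grids, and verifying that one of them captures any prescribed cube $Q$ inside a dyadic cube of side at most $3\ell(Q)$.

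For the construction, I would exhibit the family as translates of the standard dyadic grid $\mathcal{D}^{(1)} = \{2^k([0,1)^d + m) : k \in \mathbb{Z},\, m \in \mathbb{Z}^d\}$, indexed by $\alpha \in \{0,1,2\}^d$. Any translate of a dyadic grid preserves the three defining axioms (uniform side lengths $2^k$, the nestedness property, and the partition of $\mathbb{R}^d$ at each scale), so each $\mathcal{D}^{(\alpha)}$ is a bona fide dyadic grid. The shifts must be chosen so that, at each scale $2^k$ relevant to the covering argument, the boundaries of the $3^d$ grids in each coordinate realize the three distinct relative positions $0$, $2^k/3$, $2\cdot 2^k/3$ modulo $2^k$. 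Achieving this simultaneously at every scale while preserving the nested structure is typically accomplished by letting the shift at level $k$ depend on $k$ through an alternating-sign pattern, as in the constructions of Mei and Hyt\"onen.

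For the covering, given $Q$ with side length $\ell(Q)$, I would choose an integer $k$ so that $2^k$ matches the scale at which the $3^d$ grids collectively exhaust the possible positions of $Q$. A direct one-dimensional calculation shows that, with the shifts chosen as above, the set of "bad" locations for each grid in each coordinate forms an interval of length $\ell(Q)$ modulo $2^k$, and that the three bad intervals leave a positive-measure gap uncovered. Consequently, there is at least one $\alpha \in \{0, 1, 2\}^d$ for which the projection of $Q$ on every axis avoids a grid boundary, so $Q$ fits inside a single level-$k$ cube of $\mathcal{D}^{(\alpha)}$; taking $Q_0$ to be this cube yields $\ell(Q_0) = 2^k \leq 3\ell(Q)$ by the choice of $k$.

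The main obstacle is the design of the shifts: the tension between the nestedness axiom, which favors constant translations, and the need to realize the three relative positions at every scale, which demands scale-dependent shifts. The alternating-sign construction reconciles the two, but is the genuinely delicate step in the proof. Once it is in place, the covering reduces to the one-dimensional pigeonhole computation sketched above combined with the product structure on $\mathbb{R}^d$, so the $d$-dimensional statement is obtained by choosing the component $\alpha_i$ independently in each coordinate.
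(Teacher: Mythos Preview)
The paper does not actually prove this lemma: it states the result and cites \cite{Okikiolu} for the proof. So there is no argument in the paper to compare your proposal against. Your outline is essentially the standard modern proof (in the Mei/Hyt\"onen formulation with scale-dependent shifts rather than Okikiolu's original construction), and the plan is correct in its main steps: build the $3^d$ grids so that at every dyadic scale the coordinate boundaries of the different grids are evenly spaced, then for a given $Q$ pick $k$ with $2^{k-1}<\ell(Q)\le 2^k$ (so that $2^k\le 2\ell(Q)\le 3\ell(Q)$; you may even get the constant $3$ by taking $k$ with $3\cdot 2^{k-1}\ge \ell(Q)$ in the one-third-shift variant) and argue coordinatewise that one of the three shifts avoids cutting the projection of $Q$.

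One small point: your opening sentence describes the grids as ``translates of the standard dyadic grid'', which contradicts your later (correct) observation that the shifts must be scale-dependent to preserve nestedness; a fixed translation of $\mathcal{D}^{(1)}$ is indeed a dyadic grid but cannot realize distinct relative positions at every scale. The alternating-sign construction you mention (e.g.\ shifts of the form $\sum_{j>k}(-1)^j 2^j\alpha/3$ at level $k$) is exactly what resolves this, so your identification of the obstacle and its remedy is accurate --- just make sure the written proof does not conflate the two descriptions.
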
 

The result above allows us to prove a useful relation between the localized maximal functions given above.

\begin{lem}\label{lem-Mloc a Mdiad}
	There exists dyadic grids $\mathcal{D}^{(i)}$, $1\leq i\leq 3^d$ with the following property:
	For every cube $Q$ in $\mathbb{R}^d$ there exists $3^d$ dyadic cubes $Q_i\in \mathcal{D}^{(i)}$, $1\leq i\leq 3^d$ such that
	\[M_Q f(x)\leq 3^d \sum_{i=1}^{3^d}M_{Q_i}^{\mathscr{D}}(f\mathcal{X}_{Q})(x),\]
	for every $x\in Q$. Furthermore, each $Q_i$ verifies $Q_i\subseteq \lambda Q$, where $\lambda$ depends only on $d$.
\end{lem}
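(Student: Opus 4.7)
The plan is to apply the three-grid dyadic covering (Lemma~\ref{lema: cubrimiento por grillas diadicas}) to each cube $P\subseteq Q$ in the supremum defining $M_Q f(x)$, replacing $P$ by a slightly larger dyadic cube $P_0$ from one of the $3^d$ grids, and to choose the master cubes $Q_i\in\mathcal{D}^{(i)}$ so that every such $P_0$ from grid $\mathcal{D}^{(i)}$ is a dyadic subcube of $Q_i$.

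Given $x\in Q$ and a cube $P$ with $x\in P\subseteq Q$, Lemma~\ref{lema: cubrimiento por grillas diadicas} applied to $P$ produces an index $i(P)$ and a dyadic cube $P_0\in\mathcal{D}^{(i(P))}$ with $P\subseteq P_0$ and $\ell(P_0)\le 3\ell(P)\le 3\ell(Q)$. Provided $P_0\subseteq Q_{i(P)}$, so that $P_0\in\mathscr{D}(Q_{i(P)})$ and $x\in P_0$, one estimates
\[
\frac{1}{|P|}\int_P |f|\,\mathcal{X}_Q
\;\le\;\frac{|P_0|}{|P|}\cdot\frac{1}{|P_0|}\int_{P_0}|f|\,\mathcal{X}_Q
\;\le\;3^d\,M_{Q_{i(P)}}^{\mathscr{D}}(f\mathcal{X}_Q)(x)
\;\le\;3^d\sum_{i=1}^{3^d}M_{Q_i}^{\mathscr{D}}(f\mathcal{X}_Q)(x),
\]
and taking the supremum over all admissible $P$ yields the inequality.

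For the choice of the $Q_i$'s, observe that every relevant $P_0$ satisfies $\ell(P_0)\le 3\ell(Q)$ and meets $Q$, so it is contained in the concentric dilation $7Q$. In each grid $\mathcal{D}^{(i)}$ admitting a dyadic cube containing $7Q$, I would take $Q_i$ to be the smallest such cube; the $1/3$-shift structure of Lemma~\ref{lema: cubrimiento por grillas diadicas} guarantees that its side is bounded by a dimensional multiple of $\ell(Q)$, so $Q_i\subseteq \lambda Q$ for some $\lambda=\lambda(d)$. In the grids $\mathcal{D}^{(i)}$ in which no dyadic cube encloses $7Q$ (note that a cube straddling the origin in the standard grid is contained in no dyadic cube at all), $Q_i$ is chosen arbitrarily inside $\lambda Q$.

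I expect the main obstacle to be the rigorous verification that Lemma~\ref{lema: cubrimiento por grillas diadicas} can always be invoked in a \emph{good} grid, i.e.\ that for every $P\subseteq Q$ one may take $i(P)$ among the grids whose $Q_i$ encloses $7Q$. This is a combinatorial consequence of the $1/3$-shift construction of the grids: the obstruction that prevents enclosing $7Q$ in a given grid cannot simultaneously prevent every sub-cube $P\subseteq Q$, at every scale, from admitting a small dyadic envelope in that same grid, so at least one good grid always furnishes an admissible $P_0$ with $\ell(P_0)\le 3\ell(P)$.
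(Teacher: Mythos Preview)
Your overall strategy matches the paper's: apply Lemma~\ref{lema: cubrimiento por grillas diadicas} to each $P\subseteq Q$ to obtain a dyadic envelope $P_0\in\mathcal{D}^{(i(P))}$ with $\ell(P_0)\le 3\ell(P)$, observe that $P_0$ lies in a fixed dilate of $Q$, and then absorb $P_0$ into a master dyadic cube $Q_i\in\mathcal{D}^{(i)}$.

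The gap is in your construction of the $Q_i$. Your proposal---take $Q_i$ to be the smallest cube in $\mathcal{D}^{(i)}$ containing $7Q$ when one exists, and otherwise choose $Q_i$ arbitrarily and argue that Lemma~\ref{lema: cubrimiento por grillas diadicas} can always be invoked in a ``good'' grid---is not carried out. The final paragraph is not a proof: the assertion that the obstruction ``cannot simultaneously prevent every sub-cube $P\subseteq Q$ \dots\ from admitting a small dyadic envelope in that same grid'' points in the wrong direction (it says bad grids still envelope \emph{some} $P$'s, whereas you need that \emph{good} grids envelope \emph{every} $P$), and the stated conclusion does not follow from it. Concretely, take $Q=[-1,1]\subset\mathbb{R}$ and $P=[0.1,0.9]$: the standard grid is bad for $7Q$ (no dyadic interval contains the origin) yet furnishes the perfectly valid envelope $P_0=[0,1)$. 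Lemma~\ref{lema: cubrimiento por grillas diadicas} only asserts existence of \emph{some} index $i(P)$, so you have no mechanism to force $i(P)$ away from the bad grids.

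The paper avoids the good/bad dichotomy altogether. After reaching the intermediate bound
\[
M_Q f(x)\le 3^d\sum_{i=1}^{3^d} M_{8\sqrt d\,Q,\,\mathcal{D}^{(i)}}(f\mathcal X_Q)(x)
\]
(exactly as in your first display), it fixes the scale $k$ with $2^k<8\sqrt d\,\ell(Q)\le 2^{k+1}$ and, for each $i$, takes $Q_i\in\mathcal{D}^{(i)}_{k+1}$ to be a single dyadic cube at scale $k{+}1$ covering the scale-$k$ cubes that meet $8\sqrt d\,Q$, and then verifies $8\sqrt d\,Q\subseteq Q_i\subseteq 48d\,Q$ directly. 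This is a uniform construction at a fixed scale for every grid $i$, not a case split, and it yields $M_{8\sqrt d\,Q,\,\mathcal{D}^{(i)}}\le M_{Q_i}^{\mathscr{D}}$ for all $i$ simultaneously.
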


\begin{proof}
	Before proceeding with the proof, given a cube $Q_0$ and a dyadic grid $\mathcal{D}$ we shall denote by $M_{Q_0,\mathcal{D}}f$ the version of $M_{Q_0}$ where the supremum is taken only for cubes in $\mathcal{D}$ contained in $Q_0$, that is
	\[M_{Q_0,\mathcal{D}}f(x)=\sup_{Q\in \mathcal{D}_{Q_0}}\frac{1}{|Q|}\int_Q |f|.\]
	
	Fix $x\in Q$ and let $P\subseteq Q$ be a subcube containing $x$. By Lemma~\ref{lema: cubrimiento por grillas diadicas} there exists a dyadic grid $\mathcal{D}^{(i)}$ and $P_i\in\mathcal{D}^{(i)}$ such that $P\subseteq P_i$ and $\ell(P_i)\leq 3\ell(P)$. We claim that $P_i\subseteq 8\sqrt{d}Q$. Indeed, if $x_Q$ denotes the center of $Q$, for $y\in P_i$ we have that
	\[|y-x_Q|\leq |y-x|+|x-x_Q|\leq \sqrt{d}\ell(P_i)+\frac{\sqrt{d}}{2}\ell(Q)\leq 4\sqrt{d}\ell(Q),\]
	so $P_i\subseteq B(x_Q, 4\sqrt{d}\ell(Q))\subseteq 8\sqrt{d}Q$.
	Therefore we have that
	\begin{align*}
	\frac{1}{|P|}\int_P |f|&=\frac{1}{|P|}\int_P |f|\mathcal{X}_Q\leq \frac{3^d}{|P_i|}\int_{P_i}|f|\mathcal{X}_Q	\\
	&\leq 3^d M_{ 8\sqrt{d}Q,\mathcal{D}^{(i)}}(f\mathcal{X}_Q)(x)\\
	&\leq 3^d\sum_{i=1}^{3^d}M_{ 8\sqrt{d}Q,\mathcal{D}^{(i)}}(f\mathcal{X}_Q)(x). 
	\end{align*}
By taking supremum over the cubes $P\subseteq Q$ we arrive to
\[M_{Q}f(x)\leq 3^d\sum_{i=1}^{3^d}M_{ 8\sqrt{d}Q,\mathcal{D}^{(i)}}(f\mathcal{X}_Q)(x),\]
for every $x\in Q$.

    Fix $1\leq i\leq 3^d$. There exists a unique $k\in\mathbb{Z}$ such that
    \begin{equation}\label{eq: lem-Mloc a Mdiad - eq1}
     2^k<8\sqrt{d}\ell(Q)\leq 2^{k+1}.   
    \end{equation}
    There also exist at most $2^d$ cubes in $\mathcal{D}^{(i)}$ with side length $2^k$ and that intersect $8\sqrt{d}Q$. Let $Q_i$ be the cube in $\mathcal{D}^{(i)}_{k+1}$ that contains these cubes. We claim that
    \[8\sqrt{d}Q\subseteq Q_i\subseteq 48d \,Q.\]
    Indeed, the first inclusion is immediate. For the latter, if $y\in Q_i$ and $x\in 8\sqrt{d}Q\cap Q_i$ by \eqref{eq: lem-Mloc a Mdiad - eq1} we have that
    \begin{align*}
    |x_Q-y|&\leq |x_Q-x|+|x-y|\\
    &\leq 8d\,\ell(Q)+\sqrt{d}\ell(Q_i)\\
    &\leq 8d\,\ell(Q)+16d\,\ell(Q)\\
    &= 24d\,\ell(Q),
    \end{align*}
    so $Q_i\subseteq B(x_Q, 24 d\ell(Q))\subseteq 48d\,Q.$
    
    By our choice of $Q_i$, $1\leq i\leq 3^d$, we must have
	\[M_{ 8\sqrt{d}Q,\mathcal{D}^{(i)}}(f\mathcal{X}_Q)\leq M_{ Q_i,\mathcal{D}^{(i)}}(f\mathcal{X}_Q)=M_{Q_i}^{\mathscr{D}}(f\mathcal{X}_Q).\]
	Finally,
	\[M_Qf(x)\leq 3^d\sum_{i=1}^{3^d}M_{Q_i}^{\mathscr{D}}(f\mathcal{X}_Q)(x),\]
	as desired.
\end{proof}
In the sequel we shall consider local classes of weights associated to a critical radius function as it was done in~\cite{BHS-classesofweights}. Given a weight $u$ and a critical radius function $\rho$, we say that a weight $w\in A^{\rho,\textup{loc}}_p(u)$ if it satisfies ~\eqref{eq: clase Ap,rho,theta(u)} for each cube $Q\in \mathcal{Q}_\rho$, and $w\in A^{\rho,\textup{loc}}_1(u)$ if it satisfies~\eqref{eq: clase A1,rho,theta(u)} for each cube $Q\in \mathcal{Q}_\rho$.
We also  consider classes of weights associated to a cube $R$. More precisely we  say that a weight $w\in A_p(R,u)$ if it satisfies~\eqref{eq: clase Ap,rho,theta(u)} for each cube $Q\subseteq R$, and $w\in A_1(R,u)$ if it verifies~\eqref{eq: clase A1,rho,theta(u)} for each cube $Q\subseteq R$. In all cases we will drop $u$ from the notation when $u=1$. It is easy to check that properties stated in Lemma~\ref{lema: propiedades clase Ap,rho} hold for these classes of weights.

It is well known that the classes of weights mentioned above are related with the continuity properties of the localized Hardy-Littlewood maximal operators $M_R$ (see~\cite{MS81} and~\cite{Cald76}). 

The following proposition, that  can be viewed as a localized version of Theorem~1.4 in~\cite{CruzUribe-Martell-Perez}, will play an important role in proving Theorem~\ref{thm: mixta para M}. 

\begin{prop}\label{Prop-maxRdiad}
	Let $R$ be a cube, $u\in A_1(R)$ and $v\in A_\infty(R,u)$. There exists a positive constant $C$ depending only on the characteristic constant of the weights and the dimension,  such that for all $t>0$
	\begin{equation*}
		uv\left(\left\{ x\in R: \frac{M_R^\mathscr{D}(fv)(x)}{v(x)}>t
		\right\}\right)
		\leq \frac{C}{t}\int_R |f(x)|u(x)v(x)dx.
	\end{equation*}
\end{prop}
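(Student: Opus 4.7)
The plan is to adapt the argument of Cruz Uribe-Martell-P\'erez for their Theorem~1.4 to the local setting of dyadic subcubes of $R$. As a preparatory step, I would transport Lemma~\ref{lema: u en A1,rho y v en Ap,rho implican uv en Ap,rho} and Theorem~\ref{teo: lema fundamental} to the $R$-local classes: from $u\in A_1(R)$ and $v\in A_\infty(R,u)$, the proof of Lemma~\ref{lema: u en A1,rho y v en Ap,rho implican uv en Ap,rho} applies verbatim (it operates on individual cubes, now restricted to $Q\subseteq R$) to give $uv\in A_\infty(R)$; the analogous localization of Theorem~\ref{teo: lema fundamental} then yields $u\in A_1(R,v)$, i.e., $uv(Q)/v(Q)\le C\inf_Q u$ for every $Q\subseteq R$, which is the estimate that will convert $v$-based averages into $uv$-based ones.

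Next, I would decompose the level set $\Omega_t=\{M_R^\mathscr{D}(fv)/v>t\}$ as $\Omega_t=\bigcup_k G_k$ with $G_k=\Omega_t\cap\{v\in(2^{k-1}/t,2^k/t]\}$, so on $G_k$ one has $v\le 2^k/t$ and $M_R^\mathscr{D}(fv)>2^{k-1}$. Taking for each $k$ the disjoint maximal dyadic family $\{Q_j^k\}\subseteq\mathscr{D}(R)$ with $\frac{1}{|Q_j^k|}\int_{Q_j^k}|f|v>2^{k-1}$, the set $G_k$ is covered modulo a null set by $\bigcup_j Q_j^k$. Combining the $A_1(R)$ property of $u$, the pointwise bound on $v$, and the stopping-time inequality $|Q_j^k|\le 2^{1-k}\int_{Q_j^k}|f|v$ gives
\begin{equation*}
uv(G_k\cap Q_j^k)\ \le\ \frac{2^k}{t}\,u(Q_j^k)\ \le\ \frac{C}{t}\bigl(\inf_{Q_j^k}u\bigr)\int_{Q_j^k}|f|v\ \le\ \frac{C}{t}\int_{Q_j^k}|f|\,uv.
\end{equation*}

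The hard part is the final summation: adding over $k$ and $j$ directly would overcount points lying in $Q_j^k$ for several values of $k$ (since each point $x$ belongs to $\bigcup_j Q_j^k$ for every $k$ with $M_R^\mathscr{D}(fv)(x)>2^{k-1}$), producing a logarithmic loss that is incompatible with the weak-$(1,1)$ bound sought. The remedy is the \emph{principal-cube} organization of Cruz Uribe-Martell-P\'erez inside $\mathscr{D}(R)$: one re-indexes $\{Q_j^k\}$ by a tree rooted at $R$, assigning each point of $R$ to its smallest principal ancestor $P$, and uses $uv\in A_\infty(R)$ from the preparatory step to guarantee $uv(E_P)\ge c\,uv(P)$ for the ``own part'' $E_P$ of each principal cube. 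This converts the overcounting sum into a disjoint one bounded by $\frac{C}{t}\int_R|f|\,uv$, yielding the desired estimate.

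The main obstacle is precisely this summation step: the pointwise factor $v(x)$ in the defining inequality of $\Omega_t$ naturally forces a level-set splitting whose CZ families overlap across scales, and only the principal-cube technique, which is available thanks to the $A_\infty(R)$ character of $uv$ from the preparatory step, collapses the overcounting into a convergent sum. The preparatory step and the per-cube estimate in Step~2 are otherwise routine adaptations of already-established results.
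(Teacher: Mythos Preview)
Your route differs from the paper's, and the crucial summation step is not actually carried by the argument you sketch.

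The paper follows the Cruz-Uribe--Martell--P\'erez scheme literally: it performs a Calder\'on--Zygmund decomposition of $f$ on $R$ with respect to the measure $uv\,dx$ at height $t$ (Lemma~\ref{lema: DCZ en un cubo fijo}), writes $f=g+h$ with $h=\sum_i h_i$ supported on the CZ cubes $P_i$ and $\int_{P_i}h_i\,uv=0$, and uses $u\in A_1(R)$ to pass from Lebesgue to $u$-averages, obtaining
\[
M_R^{\mathscr{D}}(fv)(x)\le [u]_{A_1(R)}\Bigl(M_{R,u}^{\mathscr{D}}(gv)(x)+\tilde M_u(hv)(x)\Bigr),\qquad
\tilde M_u\phi(x)=\sup_{Q\ni x,\,Q\in\mathscr{D}(R)}\Bigl|\tfrac{1}{u(Q)}\int_Q\phi\,u\Bigr|.
\]
This splits the level set into $I_1+I_2+I_3$. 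Term $I_1$ is handled by Chebyshev and the $L^p(uv^{1-p})$-boundedness of $M_{R,u}^{\mathscr{D}}$ (from $v\in A_p(R,u)$), $I_2=uv(\Omega)$ directly, and the key point is $I_3=0$: for $x\notin\Omega$, any dyadic $Q\ni x$ either misses $P_i$ or contains it entirely, so $\int_Q h_i\,uv=0$ by cancellation. No principal-cube combinatorics is needed.

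Your proposal instead slices $\Omega_t$ by $\{v\sim 2^k/t\}$ and runs stopping cubes $Q_j^k$ on $M_R^{\mathscr{D}}(fv)$; this is Sawyer's 1985 strategy, not the CUMP argument you name. The per-cube bound $uv(G_k\cap Q_j^k)\le\frac{C}{t}\int_{Q_j^k}|f|uv$ is fine, but the principal-cube repair you describe does not close the sum. The sparseness you invoke, $uv(E_P)\ge c\,uv(P)$, only controls $\sum_P uv(P)$; it does \emph{not} control $\sum_{k,j}\int_{Q_j^k}|f|uv$, which equals $\int_R|f|uv\cdot N(x)$ with $N(x)=\#\{(k,j):x\in Q_j^k\}\sim\log_2 M_R^{\mathscr{D}}(fv)(x)$, and no choice of principal subfamily makes that pointwise count bounded. (Equivalently, the $p=1$ Carleson embedding you would need here is false.) Sawyer's original collapse of this sum uses $v\in A_1$ in an essential way to compare $v(x)$ with $v$-averages on the stopping cubes; under the weaker hypothesis $v\in A_\infty(R,u)$ that mechanism is unavailable, which is precisely why CUMP (and the paper) switch to the CZ-with-cancellation argument.
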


\begin{proof}
    It is enough to consider a nonnegative  function $f$ and $t> f^{uv}_R = \frac{1}{uv(R)}\int_R f\, u\, v$, since in other case the estimate is trivial.
	We first apply Lemma~\ref{lema: DCZ en un cubo fijo} to decompose $R$ with respect to the measure $d\mu(x) = u(x)v(x)dx$  at height $t$, obtaining a sequence of pairwise disjoint dyadic cubes $\{P_i\}_{i\in\mathbb{N}}$ such that 
	\begin{equation*}
		t \leq f^{uv}_{P_i}\leq \gamma t,
	\end{equation*}
	and $f(x)\leq t$ for $x\in R\setminus \Omega$, being $\Omega = \bigcup_{i\in\mathbb{N}} P_i$, where $\gamma = \gamma(d)>1$.
	
	Therefore, we can write $f = g + h$, where 
	\begin{equation*}
	g(x) = 
	\begin{cases} 
		f^{uv}_{P_i}          & \mbox{if } x\in P_i   \\
		f(x)  & \mbox{if } x\in R\setminus\Omega,
	\end{cases}
\end{equation*}
and $h(x) = \sum h_i(x)$ where
\begin{equation*}
	h_i(x) = (f - f^{uv}_{P_i}) \mathcal{X}_{P_i}.
\end{equation*}

Let $Q\in \mathscr{D}(R)$, then for all $x\in Q$,
\begin{equation*}
	\begin{split}
	\frac{1}{|Q|} \int_Q fv  \leq 
	\frac{[u]_{A_1(R)}}{u(Q)} \int_Q fuv  \leq [u]_{A_1(R)}(M^\mathscr{D}_{R,u} (gv)(x) + \tilde{M}_u(hv)(x)),
\end{split}
\end{equation*}
since $u\in A_1(R)$, where
\begin{equation*}
		\tilde{M}_u \phi(x) =  \sup_{\substack{Q\ni x\\Q\in \mathscr{D}(R)}} \left| \frac{1}{u(Q)} \int_Q \phi u \right|.
\end{equation*}
Therefore, 
\begin{equation*}
	M_R^\mathscr{D} (fv)(x) \leq M^\mathscr{D}_{R,u}(gv)(x) + \tilde{M}_u(hv)(x).
\end{equation*}
Now,
\begin{equation*}
	\begin{split}
		uv\left( \left\{ x\in R: \frac{M^\mathscr{D}_R(fv)(x)}{v(x)} >t \right\} \right)
		 & \leq 
		 uv\left( \left\{ x\in R: \frac{M^\mathscr{D}_{R,u}(gv)(x)}{v(x)} > \frac{t}{2} \right\} \right)
		 \\ & \hspace{5pt} + 
		 uv\left( \left\{ x\in \Omega: \frac{\tilde{M}_u(hv)(x)}{v(x)} > \frac{t}{2} \right\} \right)
		 \\ & \hspace{5pt} + 
		 uv\left( \left\{ x\in R\setminus \Omega: \frac{\tilde{M}_u(hv)(x)}{v(x)} > \frac{t}{2} \right\} \right)
		 \\ & = I_1 + I_2 + I_3.
	\end{split}
\end{equation*}

Let us take care of $I_1$. By Chebyshev inequality, the continuity properties of the localized Hardy-Littlewood maximal operator with weights and the properties of $g$ we have  

\begin{equation*}
	\begin{split}
		I_1  & \leq 
		\frac{C}{t^p} \int_R M^{\mathscr{D}}_{R,u}(gv)(x)^p u(x) v(x)^{1-p}dx 
		\\ & \leq 
		\frac{C}{t^p} \int_R g(x)^p u(x) v(x)dx 
		\\ & \leq \frac{C \gamma^{p-1}}{t} \int_R  g(x) u(x) v(x)dx  
		\\ & =  \frac{C }{t} \int_{R\setminus \Omega}  f(x)u(x) v(x)dx + \frac{C}{t} \sum_{i\in\mathbb{N}}\int_{P_i} f(x)u(x)v(x) dx 
		\\ & = \frac{C}{t} \int_R f(x)u(x)v(x)dx.
	\end{split}
\end{equation*}

To deal with $I_2$,
\[I_2  \leq uv(\Omega) = \sum_{i\in\mathbb{N}} uv(P_i) \leq \frac{1}{t}\sum_{i\in\mathbb{N}}
		\int_{P_i} f(x)u(x)v(x)\,dx
		 \leq \frac{C}{t} \int_R f(x)u(x)v(x)\,dx.\]
		

Finally, by following a similar argument as in Theorem 1.4 in~\cite{CruzUribe-Martell-Perez} we can show that $I_3 = 0$. 
\end{proof}

We are now in a position to prove Theorem~\ref{thm: mixta para M}. 

\begin{proof}[Proof of Theorem~\ref{thm: mixta para M}]

	Let us begin by observing that, for $\varphi\in L^1_{\textup{loc}}(\RR^d)$ and $\sigma>0$, 
	\begin{equation*}
		\begin{split}
			M^{\rho,\sigma} \varphi (x)  & \leq 
			\sup_{\substack{Q\ni x \\ Q \in \mathcal{Q}_\rho}} \frac{1}{|Q|} \int_Q |\varphi| + 
			\sup_{\substack{Q\ni x \\ Q \notin \mathcal{Q}_\rho}} \left(\frac{\rho(x_Q)}{r_Q}\right)^{\sigma} \frac{1}{|Q|} \int_Q |\varphi|
		\\	& = M^\rho_{\textup{loc}}  \varphi (x) +  M^{\rho,\sigma}_{\textup{glob}}  \varphi (x).
			\end{split}
	\end{equation*}
	We can assume without losing generality that $f$ is nonnegative. Let $\{Q_j\}_{j\in\mathbb{N}}$ the family of critical cubes given in Proposition~\ref{prop-cubrimientocritico} and satisfying \ref{item: prop-cubrimientocritico - item a}. For $t>0$ we write
	\begin{equation*}
	\begin{split}
	uv\left(\left\{x\in\mathbb{R}^d:\frac{M^{\rho,\sigma}(fv)(x)}{v(x)}>t\right\}\right)
	& \leq \sum_{j\in\mathbb{N}} uv\left(\left\{x\in Q_j:\frac{M^{\rho,\sigma}(fv)(x)}{v(x)}>t\right\}\right)
	\\ & \leq  \sum_{j\in\mathbb{N}} uv\left(\left\{x\in Q_j:\frac{M^{\rho,\sigma}_{\textup{glob}}(f v)(x)}{v(x)}>\frac{t}{2}\right\}\right)
	\\ & +  \sum_{j\in\mathbb{N}} uv\left(\left\{x\in Q_j:\frac{M^\rho_{\textup{loc}}(fv)(x)}{v(x)}>\frac{t}{2}\right\}\right).
	\end{split}
	\end{equation*}	
	
	We will consider first the term corresponding to $M^{\rho,\sigma}_{\textup{glob}}$. Observe that for $x\in Q_j$, there exist a constant $c>0$ such that
	\begin{equation*}
	\begin{split}
	M^{\rho,\sigma}_{\textup{glob}}(fv)(x) & \leq C \sup_{k\geq 1} \frac{2^{-c k\sigma}}{|2^k Q_j|}\int_{2^kQ_j}  f v
	\\ & \leq C \sum_{k\geq 1} \frac{2^{-k(c\sigma-\theta)}}{u(2^kQ_j)} \int_{2^kQ_j} f v u 
	\\ & \leq \frac{C}{u(Q_j)} \sum_{k\geq 1} 2^{-k(c\sigma-\theta)} \int_{2^kQ_j} f v u = \frac{A_j}{u(Q_j)}.
	\end{split}
	\end{equation*}
	
	Now, applying part~\ref{item: prop-cubrimientocritico - item b} of Proposition~\ref{prop-cubrimientocritico}, 
	
	\begin{equation*}
	\begin{split}
	\sum_{j\in\mathbb{N}} uv  \left(\left\{x\in Q_j:\frac{M^{\rho,\sigma}_{\textup{glob}}(fv)(x)}{v(x)}>\frac{t}{2}\right\}\right)
	 \leq & \sum_{j\in\mathbb{N}} uv  \left(\left\{x\in Q_j:\frac{A_j}{u(Q_j)v(x)}>\frac{t}{2}\right\}\right)
	\\ & \leq \frac{C}{t} \sum_{j\in\mathbb{N}} \frac{A_j}{u(Q_j)}\int_{Q_j}  u(x)\,dx 
	\\ & \leq \frac{C}{t} \sum_{j\in\mathbb{N}} \sum_{k\in\mathbb{N}}
	2^{-k(c\sigma-\theta)}  \int_{2^{k}Q_j} f v u
	\\ & \leq \frac{C}{t} \sum_{k\in\mathbb{N}} 2^{-k(c\sigma-\theta)} \int_{\mathbb{R}^d}
	\left(\sum_{j\in\mathbb{N}}\mathcal{X}_{2^{k}Q_j}(y)\right)
	f(y)v(y)u(y)\,dy
	\\ &   \leq \frac{C}{t} \int_{\mathbb{R}^d} f(y) v(y)u(y),
	\end{split}
	\end{equation*}
	choosing $\sigma>(N_1+\theta + 1)/c$.
	
	To deal with the term corresponding to $M^{\rho}_{\textup{loc}}$ we will use the local theory developed in Lemma~\ref{lem-Mloc a Mdiad} and Proposition~\ref{Prop-maxRdiad}. 
	We first claim that given $Q_j$, there exists a cube $R_j\supseteq Q_j$,  $R_j = Q(x_j, C_\rho \rho(x_j))$ such that if $x\in Q_j\cap Q$ and $Q \in \mathcal{Q}_\rho$ then $Q\subseteq R_j$ . To see this, let $x\in Q_j\cap Q$ with $Q = Q(x_Q,\rho(x_Q))$. If $z\in Q$, using~\eqref{eq-constantesRho},
	\begin{equation*}
		\begin{split}
			|x_j-z| & \leq |x_j-x| + |x-z| 
			\\ & \leq \rho(x_j) + 2\rho(x_Q) 
			\\ & \leq \rho(x_j) + 2 C_0 2^{N_0} \rho(x)
			\\ & \leq \rho(x_j) + C_0^2 2^{N_0+2}\rho(x_j).
		\end{split}
	\end{equation*}

	Therefore, we have that
	\begin{equation*}
		\begin{split}
			\sum_{j\in\mathbb{N}} uv\left(\left\{x\in Q_j:\frac{M^\rho_{\textup{loc}}(fv)(x)}{v(x)}>\frac{t}{2}\right\}\right) 
			& \leq 
			\sum_{j\in\mathbb{N}} uv\left(\left\{x\in Q_j:\frac{M_{R_j}(fv)(x)}{v(x)}>\frac{t}{2}\right\}\right)
			\\ & \leq 
			\sum_{j\in\mathbb{N}} uv\left(\left\{x\in R_j:\frac{M_{R_j}(fv)(x)}{v(x)}>\frac{t}{2}\right\}\right)
			\\ & \leq  C
			\sum_{j\in\mathbb{N}}
			\sum_{i =1}^{3^d}uv\left(\left\{x\in P_{i,j}:\frac{M^{\mathscr{D}}_{P_{i,j}}(fv)(x)}{v(x)}>\frac{t}{3^d2}\right\}\right), 
		\end{split}
	\end{equation*}
where in the last inequality we used Lemma~\ref{lem-Mloc a Mdiad}.

We have to make some observations in order to apply Proposition~\ref{Prop-maxRdiad}. First, notice that if $u\in A_1^\rho$ and $v\in A_\infty^\rho(u)$, then there exists $\theta>0$ such that $u\in A_1^{\rho,\theta}$ and $v\in A_\infty^{\rho,\theta}(u)$ with constants $[u]_{A_1^{\rho,\theta}}$, $[v]_{A_\infty^{\rho,\theta}(u)}$. Therefore, $u\in A_1^{\rho,\textup{loc}}$ and $v\in A_\infty^{\rho,\textup{loc}}(u)$ with constants $[u]_{A_1^{\rho,\theta}}$, $[v]_{A_\infty^{\rho,\theta}(u)}$.
Now, by means of Corollary~1 in~\cite{BHS-classesofweights} we have that  $u\in A_1^{\lambda C_\rho \rho,\textup{loc}}$ and $v\in A_\infty^{\lambda C_\rho \rho,\textup{loc}}(u)$. Therefore we have that $u\in A_1(P_{i,j})$ and $v\in A_\infty(P_{i,j},u)$, with constants $C(\rho,d)[u]_{A_1^{\rho,\theta}}$ and $C(\rho,d)[v]_{A_\infty^{\rho,\theta}(u)}$ respectively, since $R_j = C_\rho Q_j$ and $P_{i,j}\subseteq \lambda R_j$, for each $j\in \mathbb{N}$ and $1\leq i\leq 3^d$. The thesis follows by combining Proposition~\ref{Prop-maxRdiad} with part~\ref{item: prop-cubrimientocritico - item b} of Proposition~\ref{prop-cubrimientocritico}. \qedhere
\end{proof}

Before giving a proof for Theorem~\ref{thm: mixta para T SCZO (infinito,delta)} and Theorem~\ref{thm: mixta para T SCZO (s,delta)} we need to establish the following tools. The first one assures that the smoothness estimate for SCZO holds also with an extra decay if necessary. For a proof we refer to Lemma 4 in~\cite{BHQ1}.

\begin{lem}	
Let $\delta>0$ and $T$ be a SCZO of $(s,\delta')$  type for   some $1<s\leq\infty$ and for every $\delta'\in(0,\delta)$. Then, for each $N\in\NN$, there exists a constant $C_N$ such that the associated kernel $K$ satisfies
	\begin{equation*}\label{suav-hormDECAY}
	\left(
	\int_{R<|x- y_0|<2R} |K(x,y)-K(x,y_0)|^{s} dy\right)^{1/s} \\
	\leq C_N R^{-d/s'} \left(\frac{r}{R}\right)^{\delta'}
	\left(1+\frac{R}{\rho(x)}\right)^{-N},
	\end{equation*}
	for $|y-y_0|<r\leq \rho (y_0)$,  $r<R/2$ if $s<\infty$ and
	\begin{equation*}\label{suav-puntualdecay}
	|K(x,y)-K(x,y_0)|
	\leq C_N \frac{|y-y_0|^{\de}}{|x-y|^{d+\de}}\left(1+\frac{|x-y|}{\rho(x)}\right)^{-N},\,\,\,\text{when}\,\,
	|x-y|>2|y-y_0|,
	\end{equation*}
	if $s=\infty$. In particular, this applies to any SCZO of $(s,\delta)$ type.
	
\end{lem}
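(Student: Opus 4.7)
The plan is to interpolate between the kernel size estimate, which already carries an arbitrary polynomial decay of the form $(1+R/\rho)^{-M}$, and the kernel smoothness estimate, which carries the regularity factor $(r/R)^{\delta''}$ but no decay. Taking a geometric mean of these two bounds with appropriate weights yields a smoothness-type estimate with both the regularity and an arbitrary polynomial decay, at the cost of trading a small amount of the regularity exponent. This is why the hypothesis must be phrased with the smoothness available at every $\delta'<\delta$: the gain in decay comes from using a slightly larger exponent $\delta''$ and paying by dropping back down to the target $\delta'$.

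Concretely, for the integrated case ($s<\infty$), I would first apply the size estimate in ($\mathrm{II}_s$) centered at $x_0=y_0$ separately to the kernels $K(\cdot,y)$ and $K(\cdot,y_0)$; the hypothesis $|y-y_0|<r<R/2$ makes both applications admissible. Combining via Minkowski's inequality gives, for any $M\geq 1$,
\[
\left(\int_{R<|x-y_0|<2R}|K(x,y)-K(x,y_0)|^{s}\,dx\right)^{1/s}\lesssim_{M} R^{-d/s'}\left(1+\frac{R}{\rho(y_0)}\right)^{-M}.
\]
Next, fix $\delta''\in(\delta',\delta)$; since $T$ is SCZO of $(s,\delta'')$ type, the smoothness estimate in ($\mathrm{II}_s$) reads
\[
\left(\int_{R<|x-y_0|<2R}|K(x,y)-K(x,y_0)|^{s}\,dx\right)^{1/s}\lesssim R^{-d/s'}\left(\frac{r}{R}\right)^{\delta''}.
\]
Taking the geometric mean of the two bounds with weight $\theta=\delta'/\delta''\in(0,1)$ on the latter and $1-\theta$ on the former, the regularity exponent becomes $\delta''\theta=\delta'$ while the decay exponent is $M(1-\theta)=M(\delta''-\delta')/\delta''$, which can be made as large as desired by taking $M$ large.

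For the pointwise case $s=\infty$, the argument is parallel with pointwise inequalities replacing $L^{s}$ norms. From the triangle inequality together with (III) and the observation that $|x-y|\simeq|x-y_0|$ whenever $|x-y|>2|y-y_0|$, one obtains
\[
|K(x,y)-K(x,y_0)|\lesssim_{M} \frac{1}{|x-y|^d}\left(1+\frac{|x-y|}{\rho(x)}\right)^{-M},
\]
while the $(\infty,\delta'')$ smoothness provides $|K(x,y)-K(x,y_0)|\lesssim |y-y_0|^{\delta''}/|x-y|^{d+\delta''}$. The same geometric mean argument, with $\theta=\delta'/\delta''$, concludes this case (and the ``in particular'' clause follows by letting $\delta''\to\delta$ when the full exponent is available).

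The main technical obstacle is reconciling the points at which $\rho$ is evaluated in the integrated case: the size estimate naturally produces decay in terms of $\rho(y_0)$ (the annulus center), while the conclusion is phrased with $\rho(x)$. I would handle this by invoking the critical radius comparison~\eqref{eq-constantesRho}, which after a short computation yields $1+R/\rho(y_0)\gtrsim (1+R/\rho(x))^{1/(N_0+1)}$ whenever $|x-y_0|\simeq R$; the loss of the factor $1/(N_0+1)$ in the decay exponent is absorbed by increasing $M$ at the outset. In the pointwise case no such conversion is needed because ($\mathrm{III}$) is already stated in terms of $\rho(x)$.
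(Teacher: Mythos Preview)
The paper does not prove this lemma in-text; it simply refers to Lemma~4 of~\cite{BHQ1}. Your geometric-mean interpolation between the size bound (which already carries arbitrary polynomial decay) and the smoothness bound (which carries the regularity exponent) is the standard device for producing smoothness with extra decay, and it is almost certainly what the cited reference does. The verification that the size estimate applies with center $x_0=y_0$, the use of Minkowski, and the choice $\theta=\delta'/\delta''$ are all correct.

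Two minor remarks. The integrated conclusion as printed has typographical slips: the integration should be in $dx$, and the decay should involve $\rho(y_0)$, since $x$ is the variable of integration. You silently fixed both and even discussed the $\rho(y_0)$ versus $\rho(x)$ conversion through~\eqref{eq-constantesRho}; in the actual downstream use (the estimate of $III$ in the proof of Theorem~\ref{thm: mixta para T SCZO (s,delta)}) the decay is applied at $\rho(x_{i,j})$, i.e.\ at the center $y_0$, so no conversion is needed. Second, in the $s=\infty$ line the stated conclusion carries the full exponent $\delta$, whereas your interpolation delivers an arbitrary $\delta'<\delta$; this is consistent with the $s<\infty$ conclusion and with what the hypothesis allows, and any fixed $\delta'<\delta$ suffices for the application in the proof of Theorem~\ref{thm: mixta para T SCZO (infinito,delta)}.
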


The next geometrical lemma gives an equivalent size condition for SCZO of $(s,\delta)$ type when $s<\infty$ that will be used to prove Theorem~\ref{thm: mixta para T SCZO (s,delta)}. For a proof we refer to Lemma~3 in~\cite{BHQ2}.

\begin{lem}
Let $1<s<\infty$ and $T$ be a SCZO of $(s,\delta)$ type. Then
for each $N>0$ there exists $C_N$ such that  		\begin{equation*}
		\left(\int_{B(x_0,R/2)}|K(x,y)|^s dx\right)^{1/s}
	 		\leq C_N R^{-d/s'} \left(1+ \frac{R}{\rho(x_0)}\right)^{-N},
	 		\end{equation*}
	 		whenever $R<|y-x_0|<2R$.
\end{lem}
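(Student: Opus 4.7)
The plan is to reduce this ``swapped'' size estimate to the hypothesis $(\mathrm{II}_s)$ by moving the center of the annulus from $x_0$ to $y$. The first observation is geometric: when $x\in B(x_0,R/2)$ and $R<|y-x_0|<2R$, the triangle inequality yields $R/2<|x-y|<5R/2$, so
\[B(x_0,R/2)\subseteq A_0\cup A_1\cup A_2,\qquad A_k=\{x:2^{k-1}R<|x-y|<2^{k}R\},\ k=0,1,2.\]
Thus it will suffice to estimate the integral on each dyadic annulus $A_k$ centered at $y$, of radius $R_k=2^{k-1}R\simeq R$.

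On each $A_k$ I will apply $(\mathrm{II}_s)$ with the role of the ``center point'' $x_0$ of the hypothesis played by $y$ itself and with annular radius $R_k$. The constraint $|y-x_0|<R_k/2$ of the hypothesis then becomes the trivial $|y-y|=0<R_k/2$. This yields
\[\left(\int_{A_k}|K(x,y)|^s\,dx\right)^{1/s}\leq C_N R_k^{-d/s'}\left(1+\frac{R_k}{\rho(y)}\right)^{-N},\]
and summing over $k=0,1,2$ (using $R_k\simeq R$) gives
\[\left(\int_{B(x_0,R/2)}|K(x,y)|^s\,dx\right)^{1/s}\lesssim R^{-d/s'}\left(1+\frac{R}{\rho(y)}\right)^{-N}.\]

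The final step is to trade $\rho(y)$ for $\rho(x_0)$. Since $|y-x_0|<2R$, the upper bound in~\eqref{eq-constantesRho} gives
\[\rho(y)\leq C_0\,\rho(x_0)\left(1+\frac{2R}{\rho(x_0)}\right)^{N_0/(N_0+1)},\]
and a short case analysis, splitting $R\leq\rho(x_0)$ and $R>\rho(x_0)$, produces
\[1+\frac{R}{\rho(y)}\gtrsim\left(1+\frac{R}{\rho(x_0)}\right)^{1/(N_0+1)}.\]
Since $(\mathrm{II}_s)$ is available for every positive exponent, starting from its application with $N(N_0+1)$ in place of $N$ will then deliver the desired bound with the original exponent $N$.

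I expect the main obstacle to be precisely this last step: the exponent $N_0/(N_0+1)$ in~\eqref{eq-constantesRho} forces a fractional-power loss in the decay factor, which must be compensated by inflating the decay parameter of the applied hypothesis. Everything else is a routine dyadic decomposition plus a direct application of the size estimate.
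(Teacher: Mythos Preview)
Your proof is correct. The paper itself does not supply a proof of this lemma but refers to Lemma~3 in~\cite{BHQ2}; your argument---covering $B(x_0,R/2)$ by finitely many dyadic annuli centered at $y$, applying \eqref{TamHorm} with $x_0$ taken to be $y$ (so the constraint $|y-x_0|<R_k/2$ is vacuous), and then trading $\rho(y)$ for $\rho(x_0)$ via \eqref{eq-constantesRho} at the cost of enlarging $N$---is precisely the natural route and matches what one finds in that reference.
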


We now proceed with the proof of Theorem~\ref{thm: mixta para T SCZO (infinito,delta)}.

\begin{proof}[Proof of Theorem~\ref{thm: mixta para T SCZO (infinito,delta)}]
	We shall first assume, without loss of generality, that $f$ is nonnegative. Let $\{Q_j\}_{j\in\mathbb{N}}$ be the family of critical cubes given by Proposition~\ref{prop-cubrimientocritico}. We fix $t>0$ and write
	\begin{equation*}
	\begin{split}
	uv\left(\left\{x\in\mathbb{R}^d:\frac{|T(fv)(x)|}{v(x)}>t\right\}\right)
	& \leq \sum_{j\in\mathbb{N}} uv\left(\left\{x\in Q_j:\frac{|T(fv)(x)|}{v(x)}>t\right\}\right)
	\\ & \leq  \sum_{j\in\mathbb{N}} uv\left(\left\{x\in Q_j:\frac{|T(f_1^j v)(x)|}{v(x)}>\frac{t}{2}\right\}\right)
	\\ & +  \sum_{j\in\mathbb{N}} uv\left(\left\{x\in Q_j:\frac{|T(f_2^jv)(x)|}{v(x)}>\frac{t}{2}\right\}\right),
	\end{split}
	\end{equation*}	
	where $f_1^j=f\mathcal{X}_{2Q_j}$ and $f_2^j=f\mathcal{X}_{(2Q_j)^c}$.
	
	We shall first bound the term corresponding to $f_2^j$. By Chebyshev inequality we write 
	\begin{equation*}
	\begin{split}
	\sum_{j\in\mathbb{N}} uv  \left(\left\{x\in Q_j:\frac{|T(f_2^jv)(x)|}{v(x)}>\frac{t}{2}\right\}\right)
 & \leq   \sum_{j\in\mathbb{N}}\frac{2}{t} \int_{Q_j} |T(f_2^jv)|(x)u(x)\,dx
	\\& \leq   \sum_{j\in\mathbb{N}}\frac{2}{t}\int_{Q_j} \left(\sum_{k\in\mathbb{N}}\int_{2^{k+1}Q_j\setminus 2^kQ_j}
	|K(x,y)|f(y)v(y)\,dy \right)u(x)\,dx.
	\end{split}
	\end{equation*}
	Now, by applying \eqref{TamPuntual} and $A_1^\rho$ condition for $u$, we have that 
	
	\begin{equation*}
	\begin{split}
	\sum_{j\in\mathbb{N}} uv  \left(\left\{x\in Q_j:\frac{|T(f_2^jv)(x)|}{v(x)}>\frac{t}{2}\right\}\right)
 & \leq \frac{C_N}{t} \sum_{j\in\mathbb{N}}\int_{Q_j}\sum_{k\in\mathbb{N}}
	\frac{2^{-kN}}{(2^k\rho(x_j))^d}\left(\int_{2^{k+1}Q_j}
	f(y) v(y)\,dy \right) u(x)\,dx 
	\\ & \leq \frac{C_N}{t} \sum_{j\in\mathbb{N}} \sum_{k\in\mathbb{N}}
	2^{-kN}  \frac{u(2^{k+1}Q_j)}{|2^{k+1}Q_j|}\int_{2^{k+1}Q_j}
	f(y) v(y)\,dy 
	\\ & \leq \frac{C_N}{t} \sum_{j\in\mathbb{N}} \sum_{k\in\mathbb{N}}
	2^{-k(N-\theta)}  \int_{2^{k+1}Q_j}
	f(y) v(y)u(y)\,dy
	\\ & \leq \frac{C_N}{t} \sum_{k\in\mathbb{N}} 2^{-k(N-\theta)} \int_{\mathbb{R}^d}
	\left(\sum_{j\in\mathbb{N}}\mathcal{X}_{2^{k+1}Q_j}(y)\right)
	f(y)v(y)u(y)\,dy
	\\ &   \leq \frac{C}{t} \int_{\mathbb{R}^d} f(y) v(y)u(y),
	\end{split}
	\end{equation*}
	where we have used part~\ref{item: prop-cubrimientocritico - item b} of Proposition~\ref{prop-cubrimientocritico} and chosen $N>N_1+\theta + 1$.

	We now turn our attention to the term corresponding to $f_1^j$. We shall prove that 
	\begin{equation}\label{eq: thm: mixta para T SCZO (infinito,delta) - eq1}
	 uv\left(\left\{x\in Q_j:\frac{|T(f_1^j v)(x)|}{v(x)}>\frac{t}{2}\right\}\right)\leq \frac{C}{t}\int_{2Q_j} f(x)v(x)u(x)\,dx,
	\end{equation}
	for each $j\in\mathbb{N}$. Then the result directly follows by applying part \ref{item: prop-cubrimientocritico - item b} of Proposition~\ref{prop-cubrimientocritico}. Fix $j\in\mathbb{N}$ and let us first suppose that \refstepcounter{BPQ}\label{pag: estimacion de f_1^j, promedio mayor que t/2}
	\[(f_1^j)_{2Q_j}^v=\frac{1}{v(2Q_j)}\int_{2Q_j}f_1^j(x)v(x)\,dx\geq \frac{t}{2}.\]
	Then,  by Corollary~\ref{cor: lema fundamental}
	\begin{equation*}
	\begin{split}
	 uv\left(\left\{x\in Q_j:\frac{|T(f_1^j v)(x)|}{v(x)}>\frac{t}{2}\right\}\right) & 	
 \leq \frac{uv(2Q_j)v(2Q_j)}{v(2Q_j)} \\
 & \leq \frac{2uv(2Q_j)}{v(2Q_j)t}\int_{2Q_j}f_1^j(x)v(x)\,dx
	\\ &\leq \frac{C}{t}\left(\inf_{2Q_j}u\right)\int_{2Q_j}f(x)v(x)\,dx
	\\ &\leq \frac{C}{t} \int_{2Q_j}f(x)v(x)u(x)\,dx.
	\end{split}
	\end{equation*}
	
	On the other hand, if $(f_1^j)_{2Q_j}^v<t/2$, we apply the decomposition given by Lemma~\ref{lema: DCZ en un cubo fijo} on $2Q_j$ at level $t/2$, obtaining a collection of pairwise disjoint dyadic cubes $\{P_{i,j}\}_{i\in\mathbb{N}}$ in $\mathscr{D}(2Q_j)$ and a pair of functions $g_j$ and $h_j$ such that $f_1^j=g_j+h_j$. By setting  $\tilde{P}_{i,j}=3P_{i,j}$ and $\tilde{\Omega}_j=\bigcup_{i\in\mathbb{N}} \tilde{P}_{i,j}$, we write
	
	\begin{equation*}
	\begin{split}
		 uv\left(\left\{x\in Q_j:\frac{|T(f_1^j v)(x)|}{v(x)}>\frac{t}{2}\right\}\right)  	
		 &\leq  uv\left(\left\{x\in Q_j:\frac{|T(g_j v)(x)|}{v(x)}>\frac{t}{4}\right\}\right)
		\\ & \qquad  + uv\left(\left\{x\in Q_j:\frac{|T(h_j v)(x)|}{v(x)}>\frac{t}{4}\right\}\right)
		\\ &\leq  uv\left(\left\{x\in Q_j:\frac{|T(g_j v)(x)|}{v(x)}>\frac{t}{4}\right\}\right)
	 + uv(\tilde{\Omega}_j)
		\\ & \qquad  + uv\left(\left\{x\in Q_j\setminus \tilde{\Omega}_j:\frac{|T(h_j v)(x)|}{v(x)}>\frac{t}{4}\right\}\right)
		\\ & = I + II + III.
	\end{split}
\end{equation*}
	Let us first estimate $I$. Since $v\in A_\infty^{\rho}(u)$, there exists $1<q'<\infty$ such that $v\in A_{q'}^\rho(u)$. Therefore, $v^{1-q}\in A_q^\rho(u)$ by item~\ref{item: lema: propiedades clase Ap,rho - item a} of Lemma~\ref{lema: propiedades clase Ap,rho}. Consequently, we have that $uv^{1-q}\in A_q^\rho$ by virtue of Lemma~\ref{lema: u en A1,rho y v en Ap,rho implican uv en Ap,rho}. Then, by applying Chebyshev inequality with exponent $q$ and using the strong $(q,q)$ type of $T$ for $A_q^\rho$ weights (see Remark~\ref{rem: tipo fuerte SCZ infinito}) we obtain  
	\begin{align*}
	I = uv\left(\left\{x\in Q_j:\frac{|T(g_j v)(x)|}{v(x)}>\frac{t}{4}\right\}\right)&\leq \frac{C}{t^q} \int_{\mathbb{R}^d} |T(g_j v)(x)|^q u(x)v^{1-q}(x)\,dx\\
	 & \leq \frac{C}{t^q} \int_{2Q_j} |g_j(x)v(x)|^q u(x)v^{1-q}(x)\,dx\\
	 & \leq \frac{C}{t} \int_{2Q_j} g_j(x) u(x)v(x)\,dx,
	\end{align*}
	where we have also used that $g_j(x)\leq Ct$ in almost every $x\in2Q_j$. Finally, from the definition of $g_j$ and Corollary~\ref{cor: lema fundamental} we arrive to \refstepcounter{BPQ}\label{pag: estimacion de I}
	\begin{align*}
	I & \leq \frac{C}{t} \int_{2Q_j\setminus \Omega_j} f(x) u(x)v(x)\,dx + \frac{C}{t} \sum_{i\in\mathbb{N}} \frac{uv(P_{i,j})}{v(P_{i,j})} \int_{P_{i,j}} f(x)v(x)\,dx\\
	 & \leq  \frac{C}{t} \int_{2Q_j\setminus \Omega_j} f(x) u(x)v(x)\,dx + \frac{C}{t} \sum_{i\in\mathbb{N}}  \int_{P_{i,j}} f(x)u(x)v(x)\,dx\\
	 & \leq  \frac{C}{t} \int_{2Q_j} f(x) u(x)v(x)\,dx. 
	\end{align*}	
	
	To deal with $II$, observe that $uv\in A_\infty^\rho$ by Lemma~\ref{lema: u en A1,rho y v en Ap,rho implican uv en Ap,rho} and therefore it is a doubling weight over sub-critical cubes. We proceed as follows  \refstepcounter{BPQ}\label{pag: estimacion de II}
	\begin{equation*}
	\begin{split}
	II  = uv(\tilde{\Omega}_j) \leq \sum_{i\in\mathbb{N}} uv(\tilde{P}_{i,j})
	 & \leq  C \sum_{i\in\mathbb{N}} v(P_{i,j}) \frac{uv(P_{i,j})}{v(P_{i,j})}
	\\ & \leq  \frac{C}{t} \sum_{i\in\mathbb{N}} \left(\inf_{P_{i,j}}u\right)  \int_{P_{i,j}} f(x)v(x)\,dx
	\\  & \leq  \frac{C}{t} \int_{2Q_j} f(x)u(x)v(x)\,dx.
	\end{split}
	\end{equation*}
	
	In order to estimate $III$, by using item~\ref{item: lema: DCZ en un cubo fijo - item c} of Lemma~\ref{lema: DCZ en un cubo fijo} we can write \refstepcounter{BPQ}\label{pag: estimacion de III}
	\begin{equation*}
	\begin{split}
	 III  & = uv\left(\left\{x\in Q_j\setminus \tilde{\Omega}_j:\frac{|T(h_jv)(x)|}{v(x)}>\frac{t}{4}\right\}\right)
	\\ & \leq \frac{C}{t} \sum_{i\in\mathbb{N}} \int_{2Q_j\setminus \tilde{P}_{i,j}} |T(h_{i,j}v)(x)|u(x)\,dx
	\\ & \leq \frac{C}{t} \sum_{i\in\mathbb{N}} \int_{2Q_j\setminus \tilde{P}_{i,j}} \left|\int_{P_{i,j}} K(x,y)h_{i,j}(y)v(y)\,dy\right|u(x)\,dx 
	\\ & \leq \frac{C}{t} \sum_{i\in\mathbb{N}} \int_{2Q_j\setminus \tilde{P}_{i,j}} \left|\int_{P_{i,j}} \left[K(x,y)-K(x,x_{i,j})\right]h_{i,j}(y)v(y)\,dy\right|u(x)\,dx 
	\\ & \leq \frac{C}{t} \sum_{i\in\mathbb{N}} \int_{P_{i,j}} |h_{i,j}(y)|v(y)   \int_{2Q_j\setminus \tilde{P}_{i,j}} \left|K(x,y)-K(x,x_{i,j})\right|u(x)\,dx\,dy. 
	\end{split}
	\end{equation*}
	
	Let us denote $r_{i,j}$ the radius of the ball inscribed in $\tilde{P}_{i,j}$ and let $\theta\geq0$ such that $u\in A_1^{\rho,\theta}$. For $y\in P_{i,j}$, by applying condition \eqref{suav-puntual}  we have that
	
	\begin{equation*}
	\begin{split}
  \int_{2Q_j\setminus \tilde{P}_{i,j}} &\left|K(x,y)-K(x,x_{i,j})\right|u(x)\,dx
  \\ & \leq C_N \sum_{k\in\mathbb{N}} \int_{2^{k-1}r_{i,j}\leq |x-x_{i,j}|<2^k r_{i,j}} \frac{|y-x_{i,j}|^{\delta}}{|x-x_{i,j}|^{d+\delta}} \left(1+\frac{|x-x_{i,j}|}{\rho(x_{i,j})}\right)^{-N}u(x)\,dx
   \\ & \leq C_N \sum_{k\in\mathbb{N}}
   \frac{r_{i,j}^{\delta}}{(2^k r_{i,j})^{d+\delta}}
   \left(1+\frac{2^k r_{i,j}}{\rho(x_{i,j})}\right)^{-N}
    \int_{|x-x_{i,j}|<2^k r_{i,j}}  u(x)\,dx
     \\ & \leq C_N u(y) \sum_{k\in\mathbb{N}}
    2^{-k\delta}
    \left(1+\frac{2^k r_{i,j}}{\rho(x_{i,j})}\right)^{-N+\theta} \leq C u(y),
	\end{split}
	\end{equation*}
	where we also used that $u\in A_1^\rho$ and chose $N>\theta$. Therefore, by Corollary~\ref{cor: lema fundamental} we get \refstepcounter{BPQ}\label{pag: estimacion de III-2}
	
	\begin{equation*}
	\begin{split}
	III  &  \leq \frac{C}{t} \sum_{i\in\mathbb{N}} \int_{P_{i,j}} |h_{i,j}(y)|u(y)v(y)\,dy 
	\\ &  \leq \frac{C}{t} \sum_{i\in\mathbb{N}} \left(
	\int_{P_{i,j}} f_1^{j}(y)u(y)v(y)\,dy  
	+\int_{P_{i,j}} (f_1^j)^v_{P_{i,j}}u(y)v(y)\,dy \right)
	\\ &  \leq \frac{C}{t} \sum_{i\in\mathbb{N}} \left(
	\int_{P_{i,j}} f_1^{j}(y)u(y)v(y)\,dy  
	+ \frac{uv(P_{i,j})}{v(P_{i,j})}\int_{P_{i,j}} f(y)v(y)\,dy \right)
	\\ &  \leq \frac{C}{t} \int_{2Q_j} f(y)u(y)v(y)\,dy.
	\end{split}
	\end{equation*}
	This completes the proof of \eqref{eq: thm: mixta para T SCZO (infinito,delta) - eq1}. By summing over $j$ we can conclude the thesis. \qedhere
\end{proof}


\begin{proof}[Proof of Theorem~\ref{thm: mixta para T SCZO (s,delta)}]
	The proof will follow similar lines as the previous one, so we will just focus on the parts where suitable changes are needed. We consider again	 $\{Q_j\}_{j\in\mathbb{N}}$ the family of critical cubes given by Proposition~\ref{prop-cubrimientocritico}. Fix $t>0$ and  write
	\begin{equation*}
	\begin{split}
	uv\left(\left\{x\in\mathbb{R}^d:\frac{|T(fv)(x)|}{v(x)}>t\right\}\right)
	& \leq \sum_{j\in\mathbb{N}} uv\left(\left\{x\in Q_j:\frac{|T(fv)(x)|}{v(x)}>t\right\}\right)
	\\ & \leq  \sum_{j\in\mathbb{N}} uv\left(\left\{x\in Q_j:\frac{|T(f_1^j v)(x)|}{v(x)}>\frac{t}{2}\right\}\right)
	\\ & \qquad+  \sum_{j\in\mathbb{N}} uv\left(\left\{x\in Q_j:\frac{|T(f_2^jv)(x)|}{v(x)}>\frac{t}{2}\right\}\right),
	\end{split}
	\end{equation*}	
	where $f_1^j=f\mathcal{X}_{2Q_j}$ and $f_2^j=f\mathcal{X}_{(2Q_j)^c}$.
	
	We will first bound the term corresponding to $f_2^j$. By applying Chebyshev inequality we have 
	\begin{equation*}
	\begin{split}
	\sum_{j\in\mathbb{N}} uv & \left(\left\{x\in Q_j:\frac{|T(f_2^jv)(x)|}{v(x)}>\frac{t}{2}\right\}\right)
	\\ & \leq   \sum_{j\in\mathbb{N}}\frac{2}{t} \int_{Q_j} |T(f_2^jv)(x)|u(x)\,dx
	\\& \leq   \sum_{j\in\mathbb{N}}\frac{2}{t}\int_{Q_j} \left(\int_{(2Q_j)^c}
	|K(x,y)|f(y)v(y)\,dy \right) u(x)\,dx
	\\& \leq   \sum_{j\in\mathbb{N}}\frac{2}{t}\int_{(2Q_j)^c}  f(y)v(y) \left(\int_{Q_j}
	|K(x,y)| u(x)\,dx\right)\,dy
	\\& \leq   \sum_{j\in\mathbb{N}}\frac{2}{t} \sum_{k\in\mathbb{N}}\int_{2^{k+1}Q_j\setminus 2^kQ_j}  f(y)v(y) \left(\int_{Q_j}
	|K(x,y)|^{s}\,dx\right)^{\frac{1}{s}}\left(\int_{Q_j}
	 u^{s'}(x)\,dx\right)^{\frac{1}{s'}}\,dy.
	\end{split}
	\end{equation*}
	
	By applying condition~\eqref{TamHorm} on $K$, for each $N$ we have that 
	\begin{equation*}
	\begin{split}
	\sum_{j\in\mathbb{N}} uv & \left(\left\{x\in Q_j:\frac{|T(f_2^jv)(x)|}{v(x)}>\frac{t}{2}\right\}\right)
	\\ & \leq \frac{C_N}{t} \sum_{j\in\mathbb{N}} \sum_{k\in\mathbb{N}} |2^kQ_j|^{-1/s'} 2^{-kN}
	\int_{2^{k+1}Q_j}f(y)v(y) 
	\left(\int_{Q_j}u^{s'}\right)^{1/s'}\,dy
	\\ & \leq \frac{C_N}{t} \sum_{j\in\mathbb{N}} \sum_{k\in\mathbb{N}}  2^{-kN}
	\int_{2^{k+1}Q_j}f(y)v(y) 
	\left(\frac{1}{|2^{k+1}Q_j|}\int_{2^{k+1}Q_j}u^{s'}\right)^{1/s'}\,dy
	\\ & \leq \frac{C_N}{t} \sum_{j\in\mathbb{N}} \sum_{k\in\mathbb{N}}  2^{-k(N-\theta/s')}
	\int_{2^{k+1}Q_j}f(y)v(y) 
	u(y)\,dy
	\\ & \leq \frac{C_N}{t} \sum_{k\in\mathbb{N}} 2^{-k(N-\theta/s')} \int_{\mathbb{R}^d}
	\left(\sum_{j\in\mathbb{N}}\mathcal{X}_{2^{k+1}Q_j}(y)\right)
	f(y)v(y)u(y)\,dy
	\\ &   \leq \frac{C}{t} \int_{\mathbb{R}^d} f(y) v(y)u(y),
	\end{split}
	\end{equation*}
	where we have used part~\ref{item: prop-cubrimientocritico - item b} of Proposition~\ref{prop-cubrimientocritico} and chosen $N>N_1+\theta/s' + 1$.

	For $f_1^j$ we shall prove that, for every $j\in\mathbb{N}$, the inequality 
	\begin{equation}\label{eq: thm: mixta para T SCZO (s,delta) - eq1}
	uv\left(\left\{x\in Q_j:\frac{|T(f_1^j v)(x)|}{v(x)}>\frac{t}{2}\right\}\right)\leq \frac{C}{t}\int_{2Q_j} f(x)v(x)u(x)\,dx
	\end{equation}
	holds.
	Fixed $j\in\mathbb{N}$,  we can assume that $(f_1^j)_{2Q_j}^v<t/2$, since the estimate follows exactly as in page~\pageref{pag: estimacion de f_1^j, promedio mayor que t/2} otherwise.  We apply the decomposition given by Lemma~\ref{lema: DCZ en un cubo fijo} on $2Q_j$ at level $t/2$ obtaining the family $\{P_{i,j}\}_{i\in\mathbb{N}}$, $g_j$ and $h_j$ such that $f_1^j=g_j+h_j$. By adopting the same notation as in the proof of Theorem~\ref{thm: mixta para T SCZO (infinito,delta)} we get
	\begin{equation*}
	\begin{split}
	uv\left(\left\{x\in Q_j:\frac{|T(f_1^j v)(x)|}{v(x)}>\frac{t}{2}\right\}\right)  	
	 &\leq  uv\left(\left\{x\in Q_j:\frac{|T(g_j v)(x)|}{v(x)}>\frac{t}{4}\right\}\right)+uv(\tilde{\Omega_j})
	\\ & \qquad  + uv\left(\left\{x\in Q_j\setminus \tilde{\Omega_j}:\frac{|T(h_j v)(x)|}{v(x)}>\frac{t}{4}\right\}\right)
	\\ & = I + II + III.
	\end{split}
	\end{equation*}
	We shall first estimate $I$. By Proposition~\ref{prop: peso para el tipo fuerte de T (s,delta)}, our hypotheses on $u$ and $v$ imply that there exists a number $1<q<s$ such that $w^{1-q'}=u^{1-q'}v\in A_{q'/s'}^\rho$. By applying Chebyshev inequality with exponent $q$ we obtain that 
	\begin{equation*}
	\begin{split}
	I & \leq \frac{C}{t^q} \int_{Q_j} |T(g_j v)(x)|^q u(x)v^{1-q}(x)\,dx
	\\ & \leq \frac{C}{t^q} \int_{Q_j} |T(g_j v)(x)|^q w(x)\,dx
	\\ & \leq \frac{C}{t^q} \int_{2Q_j} g_j(x)^q u(x)v(x)\,dx
	\\ & \leq \frac{C}{t} \int_{2Q_j} g_j(x) u(x)v(x)\,dx,
	\end{split}
	\end{equation*}
	where we have applied the boundedness of $T$ on $L^q(w)$ stated in Remark~\ref{rem: tipo fuerte SCZ s} and used that $g_j(x)\leq Ct$ for almost every $x\in 2Q_j$. From this point we can obtain the desired estimate for $I$ by repeating the argument given in page~\pageref{pag: estimacion de I}. 

		The estimate of $II$ can be performed exactly as we have done in page~\pageref{pag: estimacion de II}.	
%
	It only remains to estimate $III$. By proceeding as in page~\pageref{pag: estimacion de III} we have that
	\[III\leq \frac{C}{t} \sum_{i\in\mathbb{N}} \int_{P_{i,j}} |h_{i,j}(y)|v(y)   \int_{2Q_j\setminus \tilde{P_{i,j}}} \left|K(x,y)-K(x,x_{i,j})\right|u(x)\,dx\,dy.\]	
	Let $r_{i,j}$ be as in the proof of Theorem~\ref{thm: mixta para T SCZO (infinito,delta)} and $\theta\geq 0$ such that $u^{s'}\in A_1^{\rho,\theta}$. By applying condition \eqref{suav-horm} on the kernel, for $y\in P_{i,j}$ we obtain that
	
	\begin{equation*}
	\begin{split}
	\int_{2Q_j\setminus\tilde{P}_{i,j}} &\left|K(x,y)-K(x,x_{i,j})\right|u(x)\,dx
	\\ & \leq \sum_{k\in\mathbb{N}} \int_{ 2^{k-1}r_{i,j}\leq |x-x_{i,j}|<2^k r_{i,j}} 
	\left|K(x,y)-K(x,x_{i,j})\right|u(x)\,dx
	\\ & \leq \sum_{k\in\mathbb{N}} \left(\int_{ 2^{k-1}r_{i,j}\leq |x-x_{i,j}|<2^k r_{i,j}}
	\left|K(x,y)-K(x,x_{i,j})\right|^s\,dx\right)^{1/s}
	\left(\int_{ |x-x_{i,j}|<2^k r_{i,j}}  u^{s'}(x)\,dx\right)^{1/s'}
	\\ & \leq C_N \sum_{k\in\mathbb{N}} (2^k r_{i,j})^{-d/s'} 2^{-k\delta} 
	\left(1+\frac{2^kr_{i,j}}{\rho(x_{i,j})}\right)^{-N}
	(2^kr_{i,j})^{d/s'} \left(1+\frac{2^kr_{i,j}}{\rho(x_{i,j})}\right)^{\theta/s'}
	u(y)
	\\ & \leq C_N u(y) \sum_{k\in\mathbb{N}}
	2^{-k\delta}
	\left(1+\frac{2^k r_{i,j}}{\rho(x_{i,j})}\right)^{-N+\theta/s'}
	\\ & \leq C u(y),
	\end{split}
	\end{equation*}
	where we have chosen $N>\theta/s'$. Therefore
	\[III \leq \frac{C}{t} \sum_{i\in\mathbb{N}} \int_{P_{i,j}} |h_{i,j}(y)|u(y)v(y)\,dy\]
	and we can conclude the estimate as in page~\pageref{pag: estimacion de III-2}. This completes the proof of \eqref{eq: thm: mixta para T SCZO (s,delta) - eq1} and we are done.
\end{proof}

%
%
%

\section{Application: Schr\"odinger type singular integrals}\label{seccion: aplicaciones}

In this section we are going to apply the results obtained along this work to some operators associated to the Schr\"odinger semigroup generated by $L= -\Delta + V$ on $\mathbb{R}^d$,  $d\geq 3$. We will assume that   the potential $V$ is a non-negative function, not identically zero and satisfying a reverse-H\"older condition of order $q>d/2$, this is 
\[
\left( \frac{1}{|B|} \int_B V^q\right) ^{1/q} \leq C \frac{1}{|B|} \int_B V,
\]
for all ball $B$. As usual, we denote this by $V\in \textup{RH}_q$. 

In~\cite{shen}, Shen defined the function \[
\rho(x)= \sup\left\lbrace r>0: \frac{1}{r^{d-2}} \int_{B(x,r)}V(x)\,dx \leq 1 \right\rbrace.
\]
and proved that is a critical radius function, under the above assumptions. There, he also proved the boundedness on Lebesgue spaces of some singular integral operators associated to $L$. Later, Shen's results were extended in several directions, considering different function spaces or different type of inequalities concerning these singular integral operators. 

Our purpose in this section is to obtain mixed weak type inequalities for the Schr\"odinger Riesz transforms of first and second order $\mathcal{R}_1=\nabla L^{-1/2}$ and $\mathcal{R}_2=\nabla^2 L^{-1}$ as well as for the operators $T_\gamma= V^\gamma L^{-\gamma}$ for $0<\gamma<d/2$ and $S_\gamma = V^{\gamma-1/2}\nabla L^{-\gamma}$ for $1/2\leq \gamma<1 $.

As it is implied by Shen's work, if $V\in \textup{RH}_q$, with $q<\infty$ then the only candidate to be a SCZO of $(\infty,\delta)$ type is $\mathcal{R}_1$ provided $q>d$. The rest of the operators above are not even bounded in the whole range  $1<p<\infty$.

We summarize in the next proposition some known properties for the above operators. A proof can be found in~\cite{BHQ1} where SCZO classes were widely discussed, however most of the required estimates may be traced back to~\cite{shen}.

\begin{prop}\label{prop-lista}
	Let $V\geq 0$ satisfying a reverse H\"older inequality of order $q>d/2$ with $d\geq3$.
	Then, for some $\delta>0$, which may be different at each occurrence, we have:
	\begin{enumerate}[\rm(a)]
		\item\label{item-lista1}
		$ \mathcal{R}_1$ is a SCZO of $(\infty,\delta)$ type if we further ask $q\geq d$.
		\item\label{item-lista2}
		$\mathcal{R}_1$ is a SCZO of $(p_0,\delta)$ type, with $p_0$ such that $1/p_0=1/q-1/d$.
		\item\label{item-lista3}
		$\mathcal{R}_2$ is a SCZO of $(q,\delta)$ type.
		\item\label{item-lista4}
		$T_\gamma$ is a SCZO of $(q/\gamma,\delta)$  type for $0<\gamma<d/2$.
		\item\label{item-lista5}
		$S_\gamma$ is a SCZO of $(q_\gamma,\delta)$ type  where $q_\gamma$ is  such that 
			\begin{equation*}\label{eq-qgamma}
		1/q_\gamma=\left( 1/q -1/d\right)^+ +(2\gamma-1) /2q
		\end{equation*}
		 with $1/2<\gamma\leq 1$.
	\end{enumerate}
\end{prop}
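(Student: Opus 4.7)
The plan is to verify each of the five items by combining kernel representations for the operators through the heat semigroup $\{e^{-tL}\}_{t>0}$ with Shen's classical pointwise estimates on the fundamental solution $\Gamma(x,y)$ of $L=-\Delta+V$. The common starting point is the integral representation obtained from
\[
L^{-\gamma} = \frac{1}{\Gamma(\gamma)} \int_0^\infty t^{\gamma-1} e^{-tL}\,dt, \qquad 0<\gamma<\infty,
\]
together with the identity $L^{-1/2} = c\int_0^\infty t^{-1/2}e^{-tL}\,dt$. This reduces all five operators to manipulations of the kernel $p_t(x,y)$ of $e^{-tL}$ and its spatial derivatives, weighted by appropriate powers of $V$.

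First I would collect the fundamental estimates from Shen's work: for each $N>0$ there exists $C_N$ so that
\[
|\Gamma(x,y)| \leq \frac{C_N}{|x-y|^{d-2}}\left(1+\frac{|x-y|}{\rho(x)}\right)^{-N},
\]
together with gradient bounds $|\nabla_x \Gamma(x,y)|\lesssim|x-y|^{1-d}(1+|x-y|/\rho(x))^{-N}$ valid when $q\geq d$, and the corresponding $L^s$-integral bounds on $\nabla_x\Gamma$ for $d/2<q<d$ (with $s=p_0$) and on $\nabla_x^2 \Gamma$ (with $s=q$). These follow from the subsolution property of $L$ and the reverse H\"older condition on $V$. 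With them at hand, (a) is immediate because $q\geq d$ supplies the pointwise control needed to verify~\eqref{TamPuntual}; the smoothness~\eqref{suav-puntual} comes from interior Lipschitz estimates for $\nabla_x\Gamma$. Item (b) follows the same route, except that the pointwise gradient bound is replaced by the $L^{p_0}$ integral bound, giving~\eqref{TamHorm} with $s=p_0$. Item (c) is analogous with $\nabla^2\Gamma$ and $s=q$.

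For (d) and (e) the decisive ingredient is the elementary inequality $V(x)^\gamma \lesssim |x-y|^{-2\gamma}$ on the diagonal combined with the semigroup representation. The kernel of $T_\gamma$ is controlled by $V(x)^\gamma\Gamma(x,y)$ up to similar contributions, and one applies H\"older's inequality together with the self-improvement property $V\in\mathrm{RH}_{q+\varepsilon}$ to obtain~\eqref{TamHorm} with $s=q/\gamma$. For $S_\gamma$ one handles simultaneously $V^{\gamma-1/2}$ and $\nabla$, which combines the constraints of~(b) (the range of $\nabla\Gamma$) and~(d) (the range allowed by a power of $V$); computing the resulting H\"older exponent gives precisely $1/q_\gamma=(1/q-1/d)^+ +(2\gamma-1)/(2q)$. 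The regularity conditions~\eqref{suav-horm} in each case follow from the corresponding Lipschitz estimates on $\nabla\Gamma$, $\nabla^2\Gamma$ combined with the exponential-type decay in $|x-y|/\rho(x)$ that the Fefferman--Phong inequality supplies for $V\in\mathrm{RH}_q$.

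The main obstacle I expect is item~(e): the off-diagonal smoothness of $S_\gamma$ requires a careful interpolation argument because one must control a \emph{product} $V^{\gamma-1/2}\nabla_x\Gamma(x,y)$ in which neither factor alone has the right decay — the H\"older exponent $\delta$ and the range $q_\gamma$ arise from balancing the two, and one must verify that the resulting $\delta$ remains strictly positive. Secondarily, for~(b) and~(c) the \emph{non-pointwise} nature of the size condition forces one to pass through $L^s$-averages of $\nabla\Gamma$ and $\nabla^2\Gamma$, for which one uses Caccioppoli-type inequalities adapted to $L$; proving these with the correct $\rho$-dependent decay is the technical heart of the estimate, and is essentially where all Shen-type arguments concentrate their effort.
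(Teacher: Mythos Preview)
The paper does not supply its own proof of this proposition: it states the result as a summary of known facts and refers the reader to \cite{BHQ1} for a proof, noting that the underlying estimates go back to Shen \cite{shen}. Your outline is consistent with what one finds in those references --- Shen's pointwise and $L^s$-integral bounds on $\Gamma$, $\nabla\Gamma$, $\nabla^2\Gamma$ with $\rho$-decay, the semigroup representation of $L^{-\gamma}$, and the self-improvement of the reverse H\"older condition --- so in spirit your approach coincides with the cited literature.

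One small imprecision worth flagging: the inequality you write as ``$V(x)^\gamma\lesssim |x-y|^{-2\gamma}$ on the diagonal'' is not literally true pointwise; what is actually used in \cite{BHQ1} and \cite{shen} is an \emph{averaged} bound of the type $r^{2}\fint_{B(x,r)}V\lesssim (r/\rho(x))^{\alpha}$ for some $\alpha>0$, which comes from the definition of $\rho$ combined with $V\in\mathrm{RH}_q$, and it is this that feeds H\"older's inequality to produce the $L^{q/\gamma}$ and $L^{q_\gamma}$ size conditions. Apart from that, your identification of item~(e) as the most delicate case --- balancing the $V^{\gamma-1/2}$ factor against the integral gradient bound to extract a positive $\delta$ --- is accurate and matches where the work in \cite{BHQ1} is concentrated.
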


The proposition above, together with Theorem~\ref{thm: mixta para T SCZO (infinito,delta)} and Theorem~\ref{thm: mixta para T SCZO (s,delta)} allow us to obtain mixed inequalities for the singular integrals operators associated to $L$. We state the results in the following theorem. When considering $\mathcal{R}_1$, the openness of the reverse H\"older condition allow us to consider the cases $d/2<q<d$ and $q>d$.

\begin{thm}
	Let $V\in \textup{RH}_q$ for $q>d/2$. Then the following inequalities hold for every positive $t$.
	\begin{enumerate}[\rm(a)]
	    \item 
	 If $d<q<\infty$, $u\in A_1^\rho$ and $v\in A_\infty^\rho(u)$, then
	\[uv\left(\left\{x\in\mathbb{R}^d: \frac{|\mathcal{R}_1(fv)(x)|}{v(x)}>t\right\}\right)\leq \frac{C}{t}\int_{\mathbb{R}^d}|f(x)|u(x)v(x)dx.\]
	
	\item
	If $d/2<q<d$, $u^{p_0'}\in A_1^\rho$ and $v\in A_\infty^\rho(u^\beta)$ for some $\beta>p_0'$ then
	\[uv\left(\left\{x\in\mathbb{R}^d: \frac{|\mathcal{R}_1(fv)(x)|}{v(x)}>t\right\}\right)\leq \frac{C}{t}\int_{\mathbb{R}^d}|f(x)|u(x)v(x)dx.\]

\item
	If $u^{q'}\in A_1^\rho$ and $v\in A_\infty^\rho(u^\beta)$ for some $\beta>q'$ then
	\[uv\left(\left\{x\in\mathbb{R}^d: \frac{|\mathcal{R}_2(fv)(x)|}{v(x)}>t\right\}\right)\leq \frac{C}{t}\int_{\mathbb{R}^d}|f(x)|u(x)v(x)dx.\]

\item
	If $u^{(q/\gamma)'}\in A_1^\rho$ and $v\in A_\infty^\rho(u^\beta)$ for some $\beta>(q/\gamma)'$ then
	\[uv\left(\left\{x\in\mathbb{R}^d: \frac{|T_\gamma(fv)(x)|}{v(x)}>t\right\}\right)\leq \frac{C}{t}\int_{\mathbb{R}^d}|f(x)|u(x)v(x)dx.\]

\item
	If $u^{q_\gamma'}\in A_1^\rho$ and $v\in A_\infty^\rho(u^\beta)$ for some $\beta>q_\gamma'$ then
	\[uv\left(\left\{x\in\mathbb{R}^d: \frac{|S_\gamma(fv)(x)|}{v(x)}>t\right\}\right)\leq \frac{C}{t}\int_{\mathbb{R}^d}|f(x)|u(x)v(x)dx.\]

\end{enumerate}
\end{thm}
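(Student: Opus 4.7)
The plan is to read off each part of the theorem as a direct consequence of Proposition~\ref{prop-lista} combined with either Theorem~\ref{thm: mixta para T SCZO (infinito,delta)} or Theorem~\ref{thm: mixta para T SCZO (s,delta)}. The weight hypotheses in the statement are designed precisely to match the hypotheses of those two theorems after identifying which $(s,\delta)$ class the operator belongs to, so essentially no extra work is required beyond checking that the parameter $s$ corresponding to each operator satisfies $1<s<\infty$ whenever we invoke the $(s,\delta)$ theorem.

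For part (a), since we assume $q>d$, the openness of the reverse Hölder class gives $V\in \mathrm{RH}_{\tilde q}$ for some $\tilde q\geq d$, and item~\ref{item-lista1} of Proposition~\ref{prop-lista} yields that $\mathcal{R}_1$ is a SCZO of $(\infty,\delta)$ type. Theorem~\ref{thm: mixta para T SCZO (infinito,delta)} applied with $u\in A_1^\rho$ and $v\in A_\infty^\rho(u)$ then gives the mixed inequality immediately.

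For parts (b)-(e), I would invoke Theorem~\ref{thm: mixta para T SCZO (s,delta)} with the corresponding value of $s$: $s=p_0$ for $\mathcal{R}_1$ when $d/2<q<d$ (by item~\ref{item-lista2}), so that $s'=p_0'$ matches the assumption $u^{p_0'}\in A_1^\rho$ and $v\in A_\infty^\rho(u^\beta)$ with $\beta>p_0'$; $s=q$ for $\mathcal{R}_2$ (by item~\ref{item-lista3}); $s=q/\gamma$ for $T_\gamma$ with $0<\gamma<d/2$ (by item~\ref{item-lista4}); and $s=q_\gamma$ for $S_\gamma$ with $1/2<\gamma\leq 1$ (by item~\ref{item-lista5}). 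In each case the hypotheses $u^{s'}\in A_1^\rho$ and $v\in A_\infty^\rho(u^\beta)$ with $\beta>s'$ are exactly the requirements of Theorem~\ref{thm: mixta para T SCZO (s,delta)}.

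The only issue that could require comment is verifying that $1<s<\infty$ in the parameter ranges considered. For (b) we have $d/2<q<d$ so $1/p_0=1/q-1/d\in(0,1)$ giving $1<p_0<\infty$. For (c), $q>d/2\geq 3/2$, so $q>1$. For (d) the condition $0<\gamma<d/2$ combined with $q>d/2$ yields $q/\gamma>1$. For (e), the formula for $q_\gamma$ together with $q>d/2$ and $1/2<\gamma\leq 1$ gives $q_\gamma>1$. Since the main theorems are already proved, this verification together with the direct application is the entire argument; there is no substantive obstacle.
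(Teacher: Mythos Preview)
Your proposal is correct and follows exactly the approach of the paper: the theorem is stated as an immediate corollary of Proposition~\ref{prop-lista} together with Theorem~\ref{thm: mixta para T SCZO (infinito,delta)} (for part (a)) and Theorem~\ref{thm: mixta para T SCZO (s,delta)} (for parts (b)--(e)), with no additional argument given. Your explicit check that $1<s<\infty$ in each case is a welcome addition that the paper leaves implicit; the only superfluous step is invoking openness of $\mathrm{RH}_q$ in part (a), since $q>d$ already gives $q\geq d$ directly.
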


As expected, in all of the above theorems we generalize the weighted weak $(1,1)$ type proved in~\cite{BCH3} for $\mathcal{R}_1$, and in~\cite{BHQ1} for $\mathcal{R}_2$, $T_\gamma$ and $S_\gamma$.

\bibliographystyle{amsplain}

\end{document}